\newtheorem{theorem}{Theorem}[section]
\newtheorem{conjecture}{Conjecture}
\newtheorem{theorem*}{Theorem}
\newtheorem{lemma}[theorem]{Lemma}
\newtheorem{corollary}[theorem]{Corollary}
\newtheorem{proposition}[theorem]{Proposition}
\newtheorem{remark}[theorem]{Remark}
\newtheorem{thmx}{Theorem}
\DeclareMathOperator{\Alg}{alg}
\DeclareMathOperator{\ST}{ST}
\DeclareMathOperator{\Gl}{GL}
\DeclareMathOperator{\Amp}{Amp}
\DeclareMathOperator{\NS}{NS}
\DeclareMathOperator{\Hom}{Hom}
\newcommand*{\SheafHome}{\mathcal{H}\kern -.5pt om}
\newcommand*{\SheafHom}{\mathcal{RH}\kern -.5pt om}
\DeclareMathOperator{\Pic}{Pic}
\DeclareMathOperator{\Coh}{Coh}
\DeclareMathOperator{\Hilb}{Hilb}
\DeclareMathOperator{\Fix}{Fix}
\DeclareMathOperator{\Ann}{Ann}
\DeclareMathOperator{\Mov}{Mov}
\DeclareMathOperator{\Br}{Br}
\DeclareMathOperator{\Reldim}{reldim}
\DeclareMathOperator{\Ext}{Ext}
\DeclareMathOperator{\Deg}{deg}
\DeclareMathOperator{\Id}{id}
\DeclareMathOperator{\Bl}{Bl}
\DeclareMathOperator{\Stab}{Stab}
\DeclareMathOperator{\OGr}{OGr}
\DeclareMathOperator{\LGr}{LGr}
\DeclareMathOperator{\R}{R\Gamma}
\DeclareMathOperator{\Dim}{dim}
\DeclareMathOperator{\RHom}{RHom}
\DeclareMathOperator{\Rank}{rank}
\newcommand*{\shom}{\mathscr{H}\kern -.5pt om}
\newcommand*{\sext}{\mathscr{E}\kern -.5pt xt}
\newcommand{\subjclass}[2][2020]{%
  \let\@oldtitle\@title%
  \gdef\@title{\@oldtitle\footnotetext{#1 \emph{Mathematics subject classification.} #2}}%
}
\newcommand{\keywords}[1]{%
  \let\@@oldtitle\@title%
  \gdef\@title{\@@oldtitle\footnotetext{\emph{Key words and phrases.} #1.}}%
}
\begin{document}
\title{Fano visitor problem for K3 surfaces}
\author{Anibal Aravena}
\subjclass[2020]{14F08-14J28}
\keywords{Moduli space of sheaves on K3 surfaces.-Bridgeland stability conditons.- Derived categories}
\date{}
\maketitle
\begin{abstract}
We study the Fano visitor problem for a given K3 surface $X$ with Picard number 1 and genus $g\geq 4$. This problem  consists in constructing a smooth Fano variety $Y$ such that its bounded derived category $D^b(Y)$ contains $D^b(X)$ as an admissible category. If such variety exists, then $X$ is called a Fano visitor. We prove that if  $g\not\equiv 3\mod 4$, then $X$ is a Fano visitor. In the case  $g\equiv 3\mod 4$, we construct a  smooth weak Fano variety. Our proof is based on a detailed study of a sequence of birational maps involving Lagrangian subvarieties of several $2g$-dimensional moduli spaces of stable objects in  $D^b(X)$ constructed by Flapan, Macr\`i, O'Grady and Sacc\`a in \cite{Flapan_2021}. In this paper, We prove that this sequence of birational maps corresponds to running the minimal model program (MMP) for the blow-up $M_1=\Bl_X\mathbb{P}^g$, where  $X\hookrightarrow \mathbb{P}^g$ is the embedding given by the complete linear system associated with the ample generator of $\Pic X$.  We  fully describe  the stable base locus decomposition $M_1$. From this, we obtain $Y$ as a small modification of $M_1$, and by results of Bondal-Orlov  \cite{bondal}, and Belmans, Fu and Raedschelders \cite{belmans}, we stablish the embedding $D^b(X)\hookrightarrow D^b(Y)$. Moreover, we construct a full semiorthogonal decomposition for $D^b(Y)$. In particular, we deduce that several moduli space of sheaves on $X$ are also Fano visitors. This result provides new families  of Fano visitors,  which are hyperkh\"aler varieties  of arbitrarily high dimension.
\end{abstract}

\section{Introduction}
Let $X$ be a smooth projective variety and $D^b(X)$ its bounded derived category of coherent sheaves. We say that $X$  is a Fano visitor if  there exists a smooth Fano variety $Y$ and a fully faithful embedding
$$ D^b(X)\hookrightarrow D^b(Y).$$
If $Y$ is a weak Fano variety, i.e., $-K_Y$ is big and nef, then we say that $X$ is a weak Fano visitor. In 2011, Bondal raised the question if every smooth projective variety  is a Fano visitor (see, e.g., \cite[Pag~185]{BERNARDARA2016181}). Examples of Fano visitors include  curves and their symmetric powers and Jacobians, Enriques surfaces, and complete intersections (see \cite{nara1,nara2,leemoon,tevelev2023bgmn,Kuznetsov_2019,KIEM2017649}).

Let  $X$ be  a K3 surface of Picard number 1.  Then $\Pic X$ is cyclic and generated by an ample line bundle $\Lambda$. We define  the genus of  $X$  as  the integer $g$ such that $h^2=2g-2$, where $h=c_1(\Lambda)$ is the first Chern class of $\Lambda$. The main result of this paper is the following.
\begin{thmx}\label{A}
   Let $X$ be a K3 surface of Picard number 1 and  genus $g$. If  $g\not\equiv 3 \mod 4$, then $X$ is a Fano visitor. If $g\equiv 3\mod 4$, then $X$ is a weak Fano visitor.
\end{thmx}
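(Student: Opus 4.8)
The plan is to realize $Y$ as a small modification of the blow-up $M_1=\Bl_X\mathbb{P}^g$ and to carry the admissible copy of $D^b(X)$ across that modification. The point of departure is Orlov's blow-up formula: since $X\subset\mathbb{P}^g$ has codimension $g-2\ge 2$, there is a semiorthogonal decomposition $D^b(M_1)=\langle D^b(\mathbb{P}^g),\,D^b(X)_1,\dots,D^b(X)_{g-3}\rangle$, and the standing hypothesis $g\ge 4$ guarantees at least one copy of $D^b(X)$. Hence $D^b(X)$ is admissible in $D^b(M_1)$, and it suffices to exhibit, among the birational models of $M_1$, one that is Fano (resp. weak Fano) and across which this admissible copy persists.

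First I would make the birational geometry of $M_1$ explicit through moduli of Bridgeland-stable objects. Following Flapan--Macr\`i--O'Grady--Sacc\`a, the embedding $X\subset\mathbb{P}^g$ produces a $2g$-dimensional moduli space $M_\sigma(v)$ of $\sigma$-stable objects with $v^2=2g-2$ containing a $g$-dimensional Lagrangian $Z_\sigma$, and $Z_{\sigma_0}\cong M_1$ for a suitable initial chamber $\sigma_0$. As $\sigma$ crosses a wall of the stability manifold, the Bayer--Macr\`i correspondence turns the wall-crossing into a birational transformation of the hyperk\"ahler $M_\sigma(v)$; restricting this transformation to the Lagrangian yields a birational map of $g$-folds. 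The key claim to establish is that the resulting sequence of restricted maps is precisely a run of the MMP for $M_1$, and that each relevant step is a small modification, i.e.\ a flop, so that the model $Y$ one reaches is isomorphic to $M_1$ in codimension one.

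With this dictionary in place I would compute the chamber decomposition. Since $\Pic(M_1)=\mathbb{Z}H\oplus\mathbb{Z}E$, the movable cone $\Mov(M_1)$ is two-dimensional and each chamber is the nef cone of a flop of $M_1$; translating the flopping walls back into walls of the stability manifold pins them down and yields the full stable base locus decomposition of $M_1$. Because flops are crepant, the anticanonical class is preserved, so the problem reduces to locating the ray spanned by $-K_{M_1}=(g+1)H-(g-3)E$ relative to the chamber walls. If this ray lies in the interior of a chamber, the corresponding model $Y$ has $-K_Y$ ample and is Fano; if it lies on a wall, $-K_Y$ is big and nef but not ample, giving a weak Fano. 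The divisibility dichotomy emerges here: the numerical position of the walls depends on $g$, and one finds that the $-K$ ray meets a wall exactly when $g\equiv 3\bmod 4$.

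It remains to transport the derived structure. Because the passage from $M_1$ to $Y$ is a flop, Bondal--Orlov yields a derived equivalence $D^b(M_1)\simeq D^b(Y)$ under which the admissible copy of $D^b(X)$ in $D^b(M_1)$ is sent to an admissible copy in $D^b(Y)$; the results of Belmans--Fu--Raedschelders then let one reassemble a full semiorthogonal decomposition of $D^b(Y)$ compatible with this copy, from which the Fano-visitor conclusion for the various associated moduli spaces also follows. The heart of the argument, and the step I expect to be hardest, is the second paragraph: proving that restricting the hyperk\"ahler wall-crossings to the Flapan--Macr\`i--O'Grady--Sacc\`a Lagrangian reproduces exactly the MMP of $M_1$ and, crucially, that the step reaching $Y$ is small (a flop) rather than divisorial, since it is this numerical control that both identifies $Y$ and forces the dichotomy in $g\bmod 4$.
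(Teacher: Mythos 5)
Your overall architecture (identify $M_1=\Bl_X\mathbb{P}^g$ with a Lagrangian in a Bridgeland moduli space, track the wall-crossings as an MMP for $M_1$, locate $-K_{M_1}=(g+1)H-(g-3)E$ in the chamber decomposition of $\Mov M_1$, and note that the dichotomy in $g\bmod 4$ comes from whether this ray hits a wall) matches the paper. But your derived-category step contains a genuine error. The small modifications $M_i\dasharrow M_{i+1}$ are \emph{not} flops: each one replaces a projective bundle with fibers $\mathbb{P}^{k^-_{c,d}-1}$ by one with fibers $\mathbb{P}^{k^+_{c,d}-1}$, and since $k^+_{c,d}\neq k^-_{c,d}$ in general these maps are not crepant. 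Bondal--Orlov therefore does \emph{not} give an equivalence $D^b(M_1)\simeq D^b(Y)$; it gives only a fully faithful embedding $D^b(M_i)\hookrightarrow D^b(M_{i+1})$, and only in the direction of the larger bundle, i.e.\ only when $k^+_{c,d}\geq k^-_{c,d}$ for every pair $(c,d)$ flipped at that wall. So the admissible copy of $D^b(X)$ does not automatically ``persist'' across the modification: you must check that every flip between $M_1$ and the (weak) Fano model $M_j$ goes the right way. This is the crux of the paper's proof and is exactly what your outline omits: the identity
\begin{equation*}
k_{c,d}^+-k_{c,d}^-=\begin{cases} (2c+1)\bigl(2\mu(c,d)-1\bigr) & g \text{ even},\\ (2c+1)\bigl(2\mu(c,d)-2\bigr) & g \text{ odd},\end{cases}
\end{equation*}
shows that the sign of $k^+-k^-$ flips at the \emph{same} slope threshold ($\mu=1/2$, resp.\ $\mu=1$) at which the anticanonical ray sits, so the embeddings $D^b(M_1)\hookrightarrow\cdots\hookrightarrow D^b(M_j)$ all point forward precisely up to the Fano model. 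Without this computation the argument does not close; with an equivalence in place of an embedding it would ``prove'' far too much (any model reached by further flips would also contain $D^b(X)$, which is false).

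A secondary gap: the moduli spaces $\mathcal{M}(v_{c,d})$ need not be fine, so the flipped projective bundles can be twisted by Brauer classes; the untwisted Orlov/Bondal--Orlov/Belmans--Fu--Raedschelders statements you invoke do not apply as stated, and one needs the twisted versions (this occupies Section~\ref{twisted} of the paper). You also implicitly assume each wall induces a single flip, whereas a wall can carry several disjoint standard flips, one per pair $(c,d)$ of the given slope, and the inequality $k^+\geq k^-$ must be checked for all of them simultaneously.
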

Our proof  for  Theorem \ref{A} is based on the existence of a special sequence of birational maps involving several smooth compact complex manifolds depending  on $X$ and the ample line bundle $\Lambda$. This sequence always exists in the case when $X$ is a curve, and thus, our proof can be viewed as an extension of this argument to K3 surfaces.  To be more precise, let us  temporarily consider an arbitrary  smooth projective variety  $X$ with an ample line  bundle  $\Lambda$  such that $\omega_X\otimes \Lambda$ is very ample. Define $$M_0=\mathbb{P}(H^0(X,\omega_X\otimes \Lambda)^*),\qquad M_1=\Bl_XM_0,$$ where $X $ is embedded into $M_0$ by the  linear system $|\omega_X\otimes \Lambda|$. Suppose that there is a sequence  of smooth complex projectives varieties $M_i$, $1\leq i\leq \nu$, and  birational maps 
 \begin{equation}\label{flips}
     M_1\dasharrow  M_2\dasharrow M_3\dasharrow \ldots  \dasharrow M_\nu 
 \end{equation}
 satisfying the following conditions:
 \begin{enumerate}[i)]
     \item  The birational map $M_{i}\dasharrow M_{i+1}$ is small. Moreover, it decomposes as a sequence of standard flips. Recall that a standard flip $M\dasharrow M'$ is a birational map that fits into a diagram:
     $$ \xymatrix{
   &   &  \overset{\sim}{M} \ar[dl] \ar[dr]  &  & \\
   P \ar[drr] \ar@{^{(}->}[r]  &  M \ar@{-->}[rr] & &  M' & P' \ar@{_{(}->}[l]  \ar[dll]\\
    &  &  Z & &  }
$$
where $P,P'$ are projective bundles over $Z$, and $\overset{\sim}{M}$ is the blowup  of $M$ and $M'$  along $P$ and $P'$ respectively.   
\item There is some $j\leq \nu$ such that $M_j$ is Fano.
\item For  $i\leq j$, and for each pair of projective bundles $P\to Z$ and $P'\to Z$ that are flipped under  $M_i\dasharrow M_{i+1}$, we have the inequality 
$$ \Reldim(P\to Z)\leq \Reldim(P'\to Z).$$
 \end{enumerate}
 Then, the result of Bondal and Orlov \cite[Theorem 3.6]{bondal} provides a sequence of fully faithful embeddings 
 $$ D^b(X)\hookrightarrow D^b(M_1)\hookrightarrow D^b(M_2)\hookrightarrow \ldots \hookrightarrow D(M_j),$$
and so $X$ is a Fano visitor.  

When $X$ is a curve, such sequence was constructed by Thaddeus in \cite{Thaddeus1994}. Here  each  $M_i$ is a moduli space of pairs $(E,s)$, consisting of a non-zero section $s\in H^0(X,E)$ and a vector bundle  $E$ on $X$ of rank two and determinant $\Lambda$, subjected to some stability condition. Variation of this stability condition  induces  a small birational $M_i\dasharrow M_{i+1}$ which is a single standard flip, and (at least initially) the  projective bundle  is replaced  by another  of larger rank until a Fano model $M_j$ is reached (for some $j\leq \nu$). Moreover, by considering $\Lambda\gg 0 $, the same analysis shows that all symmetric powers of $X$ are also Fano visitors (see \cite[Proposition~3.18]{tevelev2023bgmn}).

When $X$ is a K3 surface with Picard number 1 and $\Lambda$ is the ample generator of $\Pic X$, a similar sequence of standard flips can be obtained from \cite[Section~3]{Flapan_2021} by studying   moduli spaces of (Bridgeland) stable complexes in $D^b(X)$ with Mukai vector
  $$ v\coloneqq (0,h,1-g) \in H^*(X,\mathbb{Z}),$$
  which are fixed by the contravariant functor
\begin{equation*} \Psi:D^b(X)^{op}\to D^b(X),\qquad E\mapsto \SheafHom(E,\Lambda^*[1]).
\end{equation*}
More precisely, we start by considering the moduli space $\mathcal{M}=\mathcal{M}_h(v)$ of Gieseker stable sheaves on $X$ with Mukai vector $v$. It has a Lagrangian fibration $\pi:\mathcal{M}\to \mathbb{P}^g$, called  the Beaville-Mukai system,  which corresponds to  the compactified relative Jacobian of the complete linear system $|\Lambda|$  \cite{mukaisystem,beauvillesystem}.  By the work of Bridgeland \cite{stability} and Bayer-Macr\`i  \cite{MR3194493,mmp},  we know that $\mathcal{M}$  coincides with the moduli space of Bridgeland stable complexes for some Bridgeland stability condition, and  by deforming  this stability condition, we obtain different  hyperk\"ahler (HK) varieties that are birational to  $\mathcal{M}$. Moreover, in this case, there is a  one parameter family of stability conditions which induces a diagram of flops (see \cite[Lemma~3.19]{Flapan_2021}).
\begin{equation}\label{flops}
    \resizebox{0.91\hsize}{!}{%
   $\xymatrixrowsep{1.4pc}\xymatrixcolsep{1.6pc} \xymatrix{
 \mathcal{M}=\mathcal{M}_{-1} \ar[d]_{\pi}  \ar@{-->}[rr] & &
\mathcal{M}_0  \ar@{-->}[rr] &   & \mathcal{M}_1 \ar@{-->}[rr]  &      &  \ldots   \ar@{-->}[rr] &   &  \mathcal{M}_\eta  \\
\mathbb{P}^g & & & & & & & &   }$%
}
\end{equation}
 The contravariant functor  $\Psi$ then acts on each space moduli space $\mathcal{M}_i$, inducing an antisymplectic involution $\tau$ on $\mathcal{M}_i$. This sequence of flops induces  a sequence of birational maps
\begin{equation}\label{flip2}
    M_0\dasharrow M_1\dasharrow M_2\dasharrow \ldots \dasharrow M_\nu,
\end{equation}
(we ignore $\mathcal{M}_{-1}$), where each $M_i$ is a connected component of $\Fix(\tau,\mathcal{M}_i)$. Moreover, we have that (see Proposition \ref{blowup}) $$M_0\simeq \mathbb{P}(H^0(X,\Lambda)^*),\qquad M_1=\Bl_XM_0.$$ In this paper, we show that $M_i\dasharrow M_{i+1}$ satisfies condition $i)$ \footnote{where $P,P'$ are possibly twisted projective bundles.} for $i\geq 1$ (see Proposition \ref{smallmod}). Here, the condition that the birational map decomposes into one or more standard flips is necessary, as in general $M_i\dasharrow M_{i+1}$ is not a single standard flip.  If $g\not\equiv  3 \mod 4$, then (\ref{flip2}), starting from $M_1$, satisfies conditions $ii)$ and $iii)$.  If $g\equiv 3\mod 4$, then only a weak Fano $M_j$ is reached, but condition $iii)$ still holds in this case. Since  $ P,P'$  may be  twisted, a twisted version of Bondal-Orlov's result is proven in Section  \ref{twisted}, from which Theorem \ref{A} follows.   Moreover,  we can use a twisted version of a result  due to Belmans, Fu and Raedschelders (see \cite[Theorem~A]{belmans}) to conclude that  some Hilbert powers of $X$ are also Fano or weak Fano visitors.
\begin{thmx}\label{B}
    Let $X$ be a K3 surface of Picard number 1 and let $d$ be an integer satisfying  $ 0\leq d\leq \frac{g+1}{4}$.  If  $g\not\equiv 3 \mod 4$ then $\Hilb^{d}X$ is a Fano visitor. If $g\equiv 3\mod 4$, then $\Hilb^{d}X$ is a weak Fano visitor.
\end{thmx}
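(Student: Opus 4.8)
The plan is to bootstrap Theorem \ref{B} from the detailed output of Theorem \ref{A}, not merely from its statement. Recall that the proof of Theorem \ref{A} produces a smooth (weak) Fano $Y$ together with a full semiorthogonal decomposition of $D^b(Y)$ in which $D^b(X)$ occurs as a component and every other component is generated either by a (possibly twisted) exceptional object or by a further copy of $D^b(X)$. Indeed, Orlov's blow-up formula for $M_1=\Bl_X\mathbb{P}^g$ (Proposition \ref{blowup}) contributes the $g+1$ exceptional objects of $D^b(\mathbb{P}^g)$ together with $g-3$ copies of $D^b(X)$, and each standard flip in (\ref{flip2}) appends further twisted exceptional objects through the flip formula of Section \ref{twisted}. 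The first step is to record this decomposition explicitly and to count its total exceptional length as a function of $g$; the threshold $4d\le g+1$, i.e.\ $d\le\frac{g+1}{4}$, will emerge as exactly the condition that this length is large enough.

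The second step is to invoke the twisted version of \cite[Theorem~A]{belmans}, which propagates a semiorthogonal decomposition along the Hilbert-power operation. Applied to the decomposition of $D^b(Y)$ above, and after singling out one $D^b(X)$ component, it yields a semiorthogonal decomposition whose components are external products of symmetric powers of the components of $D^b(Y)$, indexed by the nonnegative solutions of $a_0+a_1+\dots=d$. In the distinguished summand where the chosen surface component absorbs all $d$ points one reads off $D^b(\Hilb^d X)$ — here we use Fogarty's theorem that $\Hilb^d X$ is smooth of dimension $2d$, so that the symmetric power of the surface factor is genuinely the Hilbert scheme — while the remaining summands pair lower Hilbert powers $D^b(\Hilb^{a_0}X)$ with symmetric powers of the exceptional factors, the latter again generated by exceptional objects. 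Granting that this decomposition is realized on the derived category of a smooth (weak) Fano variety $Y_d$, we conclude that $D^b(\Hilb^d X)$ is admissible in $D^b(Y_d)$, and the dichotomy $g\equiv 3\bmod 4$ versus $g\not\equiv 3\bmod 4$ transfers verbatim from Theorem \ref{A} to separate the Fano from the weak Fano case.

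The main obstacle is precisely the construction of the target $Y_d$. Since $Y$ has dimension $g\ge 4$, its Hilbert scheme $\Hilb^d Y$ is singular for $d\ge 2$ and cannot serve as $Y_d$; the categorical Hilbert-power decomposition must instead be realized on a genuinely smooth (weak) Fano model, naturally a relative Hilbert scheme over the Lagrangian fibration underlying (\ref{flip2}), or a small modification thereof. Showing that such a model exists, is (weak) Fano, and carries the predicted decomposition is the heart of the argument, and is where the bound $d\le\frac{g+1}{4}$ is forced: tracking anticanonical positivity through the length-$d$ construction, weighted by the same $\bmod 4$ periodicity that governs $Y$ in Theorem \ref{A}, is what produces the factor of $4$. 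A secondary, technical obstacle is that the exceptional objects and the bundles $P,P'$ flipped in (\ref{flip2}) are twisted, so \cite[Theorem~A]{belmans} must first be upgraded to the twisted setting compatibly with Section \ref{twisted}; checking that the Brauer classes attached to the symmetric-power factors match is the delicate bookkeeping there.
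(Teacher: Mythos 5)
There is a genuine gap, and it stems from a misreading of what the semiorthogonal decomposition produced by Theorem \ref{A} actually contains. You assert that each standard flip in (\ref{flip2}) ``appends further twisted exceptional objects''; it does not. By Proposition \ref{twistedemd} and Remark \ref{uuntwisteembd}, a standard flip along a projective bundle $\mathbb{P}(\mathcal{V})\to F$ with $k>\ell$ appends copies of the full categories $D^b(F,\alpha^{m})$, where $F$ is the \emph{base} of the flipped bundle. In the sequence (\ref{flippp}) these bases are the moduli spaces $\mathcal{M}(v_{c,d})$, which are positive-dimensional hyperk\"ahler varieties; in particular, for the rank-one pairs $(0,d)$ one has $\mathcal{M}(v_{0,d})=\mathcal{M}(1,0,-d)\simeq \Hilb^{d+1}X$. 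This is exactly how the paper proves Theorem \ref{B}: the condition $\frac{g-1-4d}{2(2c+1)^2}>1$ is equivalent to $k_{c,d}^+>k_{c,d}^-$, so by Remark \ref{uuntwisteembd} the flip at the wall of slope $\mu(c,d)$ embeds $D^b(\mathcal{M}(v_{c,d}))$ into $D^b(M_{i+1})$, and one then chains the embeddings $D^b(M_{i+1})\hookrightarrow\cdots\hookrightarrow D^b(M_j)$ exactly as in the proof of Theorem \ref{A} until the (weak) Fano model $M_j$ is reached. The bound $d\le\frac{g+1}{4}$ is just the numerical condition that the relevant wall is crossed in the increasing direction before $M_j$. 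No new construction is needed.

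Your proposed route, by contrast, requires a ``categorical Hilbert power'' of the decomposition of $D^b(Y)$ realized on a new smooth (weak) Fano $Y_d$, which you yourself identify as the unproven heart of the argument: there is no reason the suggested relative Hilbert scheme over the Lagrangian fibration is smooth, let alone (weak) Fano, and no mechanism in the paper produces such a model. Moreover, the result you invoke to ``propagate a semiorthogonal decomposition along the Hilbert-power operation'' is not \cite[Theorem~A]{belmans} as used here --- that theorem concerns standard flips, and its twisted version in Section \ref{twisted} is precisely the tool that makes the paper's one-line argument work. So the step that would carry the whole proof is missing, while the correct and much shorter argument was already available from the data you listed in your first paragraph.
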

We will prove Theorem \ref{A} and a more general version of Theorem \ref{B} in Section \ref{proof}. Note that Theorem \ref{B}  provides new families  of Fano visitors,  which are hyperkh\"aler varieties  of arbitrarily high dimension.

We believe that such sequence of standard flips should exist   for any smooth projective variety $X$ and  sufficiently positive  ample line bundle $ \Lambda$ such that $\omega_X\otimes \Lambda$ is very ample. In this context, a natural conjecture is the following: 
\begin{conjecture}(Tevelev \cite{Tevelev_2023})\label{conjecture}
    $M_1=\Bl_XM_0$ is log Fano.
\end{conjecture}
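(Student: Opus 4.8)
The plan is to prove that $M_1$ is \emph{of Fano type}, i.e. that there exists an effective $\mathbb{Q}$-divisor $\Delta$ with $(M_1,\Delta)$ klt and $-(K_{M_1}+\Delta)$ ample; this is precisely the assertion that $M_1$ is log Fano. The idea is to \emph{import} the Fano-type property from the (weak) Fano model $M_j$ produced by the minimal model program of the Introduction, transporting it backwards along the flips $M_1\dashrightarrow\cdots\dashrightarrow M_j$. It is instructive to see why one cannot argue directly on $M_1$: writing $N^1(M_1)_{\mathbb{R}}=\mathbb{R}\langle \pi^*H, E\rangle$ with $\pi\colon M_1=\Bl_X\mathbb{P}^g\to\mathbb{P}^g$ and $E$ the exceptional divisor, one computes $-K_{M_1}=(g+1)\,\pi^*H-(g-3)\,E$, which lies in the interior of the effective cone but, for $g$ large, outside the nef cone (this is exactly why $M_1$ is not itself Fano). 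Hence the usual perturbation $-K_{M_1}\sim_{\mathbb{Q}}A+F$, $\Delta=\varepsilon F$, only gives $-(K_{M_1}+\varepsilon F)=(1-\varepsilon)(-K_{M_1})+\varepsilon A$, which is big but not nef, and so fails to produce an ample anticanonical class. Crossing the Mori chambers is therefore unavoidable.

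First I would check that $M_j$ is of Fano type. If $g\not\equiv 3\bmod 4$ it is Fano and one takes $\Delta=0$; if $g\equiv 3\bmod 4$ it is weak Fano, and since $M_j$ is smooth one may write $-K_{M_j}\sim_{\mathbb{Q}}A+F$ with $A$ ample and $F\geq 0$, so that for small $\varepsilon>0$ the pair $(M_j,\varepsilon F)$ is klt and $-(K_{M_j}+\varepsilon F)=(1-\varepsilon)(-K_{M_j})+\varepsilon A$ is a sum of a nef and an ample class, hence ample. Next I would observe that, by Proposition~\ref{smallmod}, the map $M_1\dashrightarrow M_j$ is a composition of standard flips, hence small; since every $M_i$ is smooth (in particular $\mathbb{Q}$-factorial) and the map is an isomorphism in codimension one, $M_1$ and $M_j$ are small $\mathbb{Q}$-factorial modifications of one another. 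Consequently they share the same divisor class group and the same Cox ring $\mathrm{Cox}(M_1)\cong\mathrm{Cox}(M_j)$, and by the work of Hu and Keel one is a Mori dream space if and only if the other is. As $M_j$ is of Fano type it is a Mori dream space, and hence so is $M_1$.

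Finally I would invoke the characterization of Fano type through the Cox ring due to Gongyo, Okawa, Sannai and Takagi: a $\mathbb{Q}$-factorial normal projective variety over $\mathbb{C}$ is of Fano type if and only if it is a Mori dream space whose Cox ring has klt singularities. Applying this to $M_j$ shows that $\mathrm{Cox}(M_j)$ is of klt type; since $\mathrm{Cox}(M_1)\cong\mathrm{Cox}(M_j)$, the ring $\mathrm{Cox}(M_1)$ is also of klt type, and as $M_1$ is a Mori dream space the same characterization yields that $M_1$ is of Fano type, i.e. log Fano. This route is attractive precisely because it transfers a quantity that is visibly invariant under small modifications (the Cox ring), bypassing the fact that the $M_i\dashrightarrow M_{i+1}$ are genuine flips rather than flops and therefore do \emph{not} preserve discrepancies; a naive transport of the klt boundary $\Delta$ from $M_j$ to $M_1$ via strict transform would not be legitimate.

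The main obstacle is the passage through singularities of the Cox ring. Concretely, one must verify that all intermediate models $M_i$ ($1\le i\le j$) are genuinely $\mathbb{Q}$-factorial and that the maps are isomorphisms in codimension one — both guaranteed here by smoothness and smallness — and, more seriously, that the Gongyo–Okawa–Sannai–Takagi hypotheses are actually available, which rests on the full Mori chamber (stable base locus) decomposition of $M_1$ established in the paper. An alternative, more hands-on route would construct $\Delta$ directly: since $-K_{M_1}$ is big (equivalently, the pseudoeffective threshold $t_{\mathrm{eff}}=\sup\{t: \pi^*H-tE \text{ is pseudoeffective}\}$ satisfies $t_{\mathrm{eff}}>\tfrac{g-3}{g+1}$, a bigness that itself follows from $M_j$ being weak Fano), one can realize $-(K_{M_1}+\Delta)$ as an ample class by choosing $\Delta$ close to the extremal effective ray $\mathbb{R}_{\ge 0}[\pi^*H-t_{\mathrm{eff}}E]$. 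The difficulty in this second route — and the reason I would favour the Cox-ring argument — is controlling the singularities of the extremal effective divisor contracted at the far end of the MMP, which is typically highly singular along the locus coming from $X$, so that a direct klt verification is delicate.
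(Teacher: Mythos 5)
Your argument is correct and is essentially the route the paper takes: the paper's one-sentence justification is that the MMP of Theorem \ref{A} reaches a (weak) Fano model $M_j$ from $M_1$ by small modifications between smooth ($\mathbb{Q}$-factorial) varieties, every weak Fano is log Fano, and the Fano-type property transports back to $M_1$ across these small modifications. You have merely made the last, implicit, step explicit via the Cox-ring characterization of Gongyo--Okawa--Sannai--Takagi, which is a valid way to do it (one could equally invoke directly that being of Fano type is invariant under small $\mathbb{Q}$-factorial modifications).
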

Since every weak Fano variety is log Fano, then Theorem \ref{A} implies the following.
\begin{corollary}
    Conjecture \ref{conjecture} holds when $X$ is a K3 surface of Picard number 1. 
\end{corollary}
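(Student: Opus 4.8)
The plan is to deduce the corollary formally from the construction underlying Theorem \ref{A}, combined with two standard facts from the minimal model program. Recall that $M_1$ is \emph{log Fano} precisely when it is of Fano type, i.e.\ when there exists an effective $\mathbb{Q}$-divisor $\Delta$ such that $(M_1,\Delta)$ is klt and $-(K_{M_1}+\Delta)$ is ample. The starting point is the proof of Theorem \ref{A}, which produces the sequence (\ref{flip2}) beginning at $M_1$ together with an index $j$ such that $M_j$ is Fano when $g\not\equiv 3\bmod 4$ and weak Fano when $g\equiv 3\bmod 4$; moreover, by Proposition \ref{smallmod}, each map $M_i\dashrightarrow M_{i+1}$ with $i\geq 1$ is small. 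Since a composition of small birational maps is again small and the inverse of a small map is small, the composite $\phi\colon M_1\dashrightarrow M_j$ has small inverse $\phi^{-1}\colon M_j\dashrightarrow M_1$; in particular $\phi^{-1}$ contracts no divisor, so it is a birational contraction.

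I would then assemble the two ingredients. First, a smooth weak Fano variety is log Fano: applying Kodaira's lemma to the nef and big class $-K_{M_j}$ gives $-K_{M_j}\sim_{\mathbb{Q}}A+E$ with $A$ ample and $E\geq 0$ effective, and for $0<\varepsilon\ll 1$ the pair $(M_j,\varepsilon E)$ is klt while
\[
-(K_{M_j}+\varepsilon E)\sim_{\mathbb{Q}}(1-\varepsilon)(-K_{M_j})+\varepsilon A
\]
is nef plus ample, hence ample; thus $M_j$ is of Fano type (and this is automatic in the Fano case). Second, being of Fano type is preserved under birational contractions, by the results of Prokhorov--Shokurov and Gongyo--Okawa--Sannai--Takagi. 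Applying this preservation statement to the birational contraction $\phi^{-1}\colon M_j\dashrightarrow M_1$ transports the Fano type property from $M_j$ back to $M_1$, which is exactly the content of Conjecture \ref{conjecture} for $X$.

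Because Theorem \ref{A} and its MMP description do all the geometric work, the corollary carries no essentially new difficulty, and the only point requiring care is the \emph{direction} in which the preservation theorem is invoked: the statement applies to birational contractions, i.e.\ maps that extract no divisor, so it is crucial that every step $M_i\dashrightarrow M_{i+1}$ be small (condition $i)$, Proposition \ref{smallmod}) in order for $\phi^{-1}$ to qualify. One should also record that each $M_i$ is normal and projective---immediate here, since they are smooth projective by construction---which is the regularity hypothesis needed for these log-MMP results. With these checks the corollary follows at once.
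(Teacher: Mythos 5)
Your argument is correct and is essentially the fleshed-out version of what the paper intends: the paper's justification is the single remark that every weak Fano variety is log Fano, implicitly combined with the fact that the (weak) Fano model $M_j$ produced in the proof of Theorem \ref{A} is connected to $M_1$ by small birational maps, under which the Fano type property is preserved. Your additional care about Kodaira's lemma, klt-ness of $(M_j,\varepsilon E)$, and the direction of the birational contraction only makes explicit what the paper leaves implicit.
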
 
After proving Theorem \ref{A} and \ref{B}, we study the continuation of the sequence of flips (\ref{flip2}) for the cases  $g\equiv 0,3 \mod 4$.

When $g\equiv 3 \mod 4$, then (\ref{flip2}) continues  with a divisorial contraction $M_\nu\to M_{\nu+1}$, where $M_{\nu+1}$ is isomorphic to the projective space $\mathbb{P}^g$. Moreover, $M_\nu$ can be identified with the blow-up of $M_{\nu+1}$ along another K3 surface. In section \ref{section9} we prove that this K3 surface is dual to $X$ in the sense of Mukai (see \cite{Mukai_1999}). More precisely, it is isomorphic to the moduli space 
$$ \widehat{X}\coloneqq \mathcal{M}(2,h,(g-1)/2).$$
Using a normalized universal family, we construct an equivalence $\Theta:D^b(X)\to D^b(\hat{X})$ that sends the sequence of flips (\ref{flip2})—including the extra divisorial contraction $M_{\nu}\to M_{\nu+1}$— to the corresponding sequence of flips for $\hat{X}$, but in opposite order (see Proposition \ref{involutioncor} for the precise statement). The same holds when we apply $\Theta$  to the sequence of flops in (\ref{flops}).  This latter result explains the symmetry observed for $g=3$, and suggested for higher genus $g\equiv 3\mod 4 $ in \cite[Page~26]{arcara}. When $g=7$, the authors in \cite{cremona} proved that $\hat{X}$ is not isomorphic $X$. Following their ideas, we use the cohomological action of $\Theta$ to extends its result for any genus $g\equiv 3\mod 4$. 

Section \ref{sectiondiv}  focuses on the case when $g\equiv 0\mod 4$. In this setting, there is a divisorial contraction into a singular Fano manifold $\varphi:M_\nu\to \overline{M}$ which is induced by a stability condition $\overline{\sigma}$ on a divisorial wall $W$ with respect to $v$ \cite[Section~3.4]{Flapan_2021}. We prove the following Proposition, which  describes the fibers of this contraction and answers a question of Macr\`i.  This result also includes the description of the  fibers of the contraction $\varphi':\Omega_\nu\to \overline{\Omega}$, where $\Omega_\nu$ denotes the other component of $\Fix(\tau,\mathcal{M}_\nu)$ (see \cite[Section~5]{Flapan_2021}).
\begin{proposition}
    If  $F\in  \overline{M}$, then $\varphi^{-1}(F)$ is Lagrangian Grassmannian $\LGr(k,2k)$ for some $k$.  If $F\in  \overline{\Omega}$, then $\varphi'^{-1}$ is a  orthogonal Grassmannian isomorphic $\OGr(k,2k)$ for some $k$. 
\end{proposition}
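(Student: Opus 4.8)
The plan is to reduce the computation of each fiber to the geometry of a single Grassmannian carrying an involution, and then identify the fixed locus of that involution.

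First I would forget the involution $\tau$ and analyze the ambient divisorial contraction $\overline{\varphi}\colon\mathcal{M}_\nu\to\overline{\mathcal{M}}$ induced by $\overline{\sigma}$ on the wall $W$, where $\overline{M}\subseteq\overline{\mathcal M}$ and $M_\nu$ is a component of $\Fix(\tau,\mathcal M_\nu)$. By the wall-crossing analysis of Bayer--Macr\`i for moduli of Bridgeland-stable objects on K3 surfaces, a divisorial wall is controlled by an effective spherical class; for $g\equiv 0\bmod 4$ this is the spherical class $s$ underlying the stability condition $\overline\sigma$ described in \cite[Section~3.4]{Flapan_2021}, with spherical stable object $S$ satisfying $\Psi(S)\cong S$. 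A point $F\in\overline{\mathcal M}$ is an $\overline\sigma$-polystable object whose Jordan--H\"older factors are $S$ (with some multiplicity) together with an $\overline\sigma$-stable object $T$; the objects $E$ with $\overline{\varphi}(E)=F$ are precisely the $\overline\sigma$-semistable extensions of $T$ by $S^{\oplus k}$, for the appropriate $k=k(F)$ recording the Brill--Noether defect of $F$. Identifying such an $E$ with the image of its extension class, the fiber $\overline{\varphi}^{-1}(F)$ becomes the Grassmannian of $k$-dimensional subspaces of the extension space $V\coloneqq\Ext^1(T,S)$, and a numerical computation of $\dim V$ from the Mukai pairing (using $s^2=-2$, the value of $\langle s,v\rangle$, and $\omega_X\cong\mathcal O_X$) gives $\dim V=2k$, so $\overline{\varphi}^{-1}(F)\cong \mathrm{Gr}(k,2k)$.

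Next I would reintroduce $\tau$ and describe its action on this fiber. Since $\tau$ is induced by the contravariant duality $\Psi$, it sends an extension of $T$ by $S^{\oplus k}$ to an extension of $S^{\oplus k}$ by $T$, i.e.\ it interchanges sub- and quotient-data; at the level of the parametrizing vector space $V=\Ext^1(T,S)$ this is the map $W\mapsto W^{\perp}$ for a bilinear pairing $V\times V\to\mathbb C$. This pairing is obtained by combining the identification $\Ext^1(T,S)\cong\Ext^1(\Psi S,\Psi T)=\Ext^1(S,T)$ coming from $\Psi$ with Serre duality $\Ext^1(S,T)\cong\Ext^1(T,S)^*$ on the K3 surface $X$; nondegeneracy is immediate because both isomorphisms are perfect. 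Consequently $\varphi^{-1}(F)=\Fix\!\big(\tau|_{\mathrm{Gr}(k,2k)}\big)=\{\,W\in\mathrm{Gr}(k,2k):W=W^{\perp}\,\}$, the variety of maximal isotropic subspaces of $(V,\langle\,,\rangle)$.

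It then remains to determine the symmetry type of $\langle\,,\rangle$, which is exactly the step that separates the two components of the fixed locus. The pairing is symmetric or alternating according to the sign introduced by the shift $[1]$ in the definition of $\Psi$ together with the parity of the Serre pairing on $\Ext^1$; I would compute this sign on each component using that the induced action of $\tau$ on the holomorphic symplectic form of $\mathcal M_\nu$ is antisymplectic, together with an explicit local normal form of $\tau$ near a self-dual polystable object. On the component $M_\nu$ the pairing is alternating, so the maximal isotropics form $\LGr(k,2k)$; on the component $\Omega_\nu$ it is symmetric, so they form $\OGr(k,2k)$. This sign computation---pinning down which component yields the symplectic form and which the orthogonal one, and verifying that the induced involution on the fiber is genuinely $W\mapsto W^{\perp}$ rather than a twist of it---is the main obstacle, and I expect it to require a careful local analysis of $\tau$ (for instance via a $\Psi$-equivariant \'etale slice or Kuranishi model at $F$) rather than a purely numerical argument.
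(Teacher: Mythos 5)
Your reduction of the fiber to the locus of maximal isotropic subspaces of a $2k$-dimensional extension space is the right skeleton, and it matches the paper's strategy: the paper also identifies $\overline{\varphi}^{-1}(F)$ with a Grassmannian $\mathrm{Gr}(k,2k)$ of subspaces $W\subset\Hom(F_k,A[1])$, shows that $\tau$ acts by $W\mapsto W^{\Psi}=\Ann(W)$ for a nondegenerate pairing, and then determines the symmetry type on each component. But there is a genuine gap, and it sits exactly where you locate "the main obstacle": you never actually determine which component carries the symmetric form and which the alternating one; you assert the answer and defer the proof to an unexecuted local analysis (equivariant slice or Kuranishi model). That determination \emph{is} the content of the proposition --- everything before it is setup --- so the proposal as written does not prove the statement.

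There is also a structural inaccuracy in how you build the pairing that would make the deferred sign computation harder than you anticipate. You define the form on $V=\Ext^1(T,S)$ by composing ``$\Ext^1(T,S)\cong\Ext^1(\Psi S,\Psi T)=\Ext^1(S,T)$'' with Serre duality, implicitly assuming $\Psi$ fixes both $S$ and $T$. In fact $\Psi(A)\simeq A[1]$, and more importantly the destabilizing quotient of $\Psi(E)$ is not $\Psi(F_k)$ but $\Phi(F_k)$ with $\Phi=\ST_A\circ\Psi$: the spherical twist is unavoidable because $\Psi$ alone does not preserve the Jordan--H\"older data on the wall. The paper's pairing is therefore $\Hom(F_k,A[1])\times\Hom(\Phi(F_k),A[1])\to\Ext^2(A,A)\simeq\mathbb{C}$, $(\alpha,\beta)\mapsto\mathrm{tr}(\beta\circ\Phi(\alpha))$, and turning it into a form on a single space requires a choice of isomorphism $s\colon\Phi(F_k)\to F_k$; the symmetry type is controlled by whether $s^{\Phi}=s$ or $s^{\Phi}=-s$. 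Pinning that sign down is what forces the paper's machinery: a natural isomorphism $\zeta\colon\Id\to\Phi^2$ built from the spherical twist and its dual, a compatibility lemma showing $t^{\Psi}=\pm t$ on $E$ if and only if $s^{\Phi}=\pm s$ on $F_k$ (which imports the already-established fact that $\Psi$ acts by $+1$ on $\Hom(E,\Psi(E))$ for $E\in M_\eta$ and by $-1$ for $E\in\Omega_\eta$), and the computation that $\Phi$ acts by $-1$ on the one-dimensional space $\Hom(\Phi(A[1]),A[1])$. None of this is visible from an antisymplecticity argument alone, and a local normal form of $\tau$ at a polystable point would still have to encode the twist by $\ST_A$; so the step you postpone is not a routine verification but the heart of the proof.
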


\subsection{Relation to previous work} An analogy between the moduli space of Thaddeus stable pairs of a curve and the Moduli space of Bridgeland stable complexes with Mukai vector $v=(0,h,g-1)$ on a K3 surface was first observed by Arcara and Bertram in \cite[Section~8]{arcara}.  In this work, the authors described only the birational transformations $M_{i}\dasharrow M_{i+1}$ induced by rank 1 walls. Using the work of \cite{Flapan_2021} and \cite{mmp}, we are able to fully describe  the geometry of these birational maps  including all the higher-rank walls.  Another related work is by Martinez in \cite{martinez}, where the author constructed a sequence of flips as in (\ref{flips}) when $X=\mathbb{P}^2$ and $\Lambda=\mathcal{O}_X(d)$ for $d\geq 2$ (see \cite[Theorem~6.16]{martinez}). As in our case, he identified the blow-up of the Veronese embedding $X\hookrightarrow \mathbb{P}(\mathcal{O}(d)^*)$ with a subvariety of a Bridgeland moduli space of rank 0 complexes in $X$. The sequence (\ref{flips})  is then obtained via variation of the Bridgeland stability condition.  In both cases, the first flips that appear coincide with the flips described by Vermeire in \cite{Vermeire_2001} using secant varieties. We would also like to mention the work of Thaddeus on variation of the moduli of  parabolic Higgs bundles on a curve \cite{parabolics}. There, he showed that variation of the parabolic weights induces Mukai flops between different moduli spaces of  parabolic Higgs bundles. If we identified an ordinary parabolic bundle as a parabolic Higgs bundle with the zero Higgs field, then each moduli space of parabolic bundles is identified   as a subvariety of a moduli space of Higgs bundle. This sub-moduli space  is fixed by the antisymplectic involution given by  multiplying the  Higgs field by $-1$, and the restriction of each  Mukai flop  induces standard flips \cite[Section~7]{parabolicsflips} in the same way that the Bridgeland moduli spaces induce standard flips on the connected component of fixed locus by the antisymplectic involution $\tau$.

\subsection{Organization} The paper is organized as follows. In Section \ref{mukaiiso}, we recall the basics of the Mukai isomorphism. In Section \ref{sectio3}, we state a result proved in \cite[Section~5]{Flapan_2021} that describes the restriction of a Mukai flop to a connected component of the fixed locus of an antisymplectic involution. In Section \ref{section4}, we focus on studying eigenspaces corresponding to the action of (derived) dual functors on homomorphisms and their higher Ext groups. Section \ref{twisted} is dedicated to proving a twisted version of \cite[Theorem~A]{belmans}. We will use this result primarily to study how the derived category changes under the standard flips appearing  in (\ref{flip2}). In Section \ref{mukaiflop}, we examine the birational geometry of the moduli space $\mathcal{M}(v)$ and construct  the sequence of Mukai flops, closely following  the ideas  in \cite[Section~3]{Flapan_2021}. In Section \ref{constructionflips}, we study the geometry of the restriction of the Mukai flops from Section \ref{mukaiflop} to a connected component of the fixed locus of the antisymplectic involution. Here, we construct the sequence of birational maps (\ref{flip2}) and show that $M_1=\Bl_X\mathbb{P}^g$ and that each map $M_i\dasharrow M_{i+1}$ is small standard flip   for $i\geq 1$. This corresponds to condition $i)$. In Section \ref{section8}, we compute the ample cones  of each $M_i$. In Section \ref{proof}, we provide the proofs of Theorem \ref{A} and Theorem \ref{B}, along with some low genus examples.  In Section \ref{section9}, we restrict to the case $g\equiv 3\mod 4$. Here, we construct  an equivalence $\Phi:D^b(X)\to D^b(\hat{X})$ which interchanges the sequence of flips (\ref{flip2}) for $X$ and $\hat{X}$, and provide a proof that $\hat{X}$ is not isomorphic to $X$. Finally, in Section \ref{sectiondiv} we study the fibers of the contraction map $M_\nu\to \overline{M}$ in the case $g$ is divisible by 4, and show that these are symplectic Grassmannians.  

\subsection{Acknowledgements}
I would like to express my gratitude to my PhD advisor, Jenia Tevelev, for suggesting this problem to me and for his guidance and insightful comments. I would also like to thank  Eyal Markman,  Misha Verbitsky and Sokratis Zikas for useful  discussions, and especially Emanuele Macr\`i for  his insightful correspondence and discussions. This project has been partially supported by the NSF grants DMS-2401387 and DMS-2101726  (PI Jenia Tevelev). This paper was written during my visit to Instituto de Matem\'atica Pura e Aplicada in Rio de Janeiro, Brazil,  and Pontificia Universidad Cat\'olica de Chile (PUC). I am grateful to IMPA  and PUC for their  hospitality and to IMPA for its financial support. 

\section{Mukai isomorphism}\label{mukaiiso}
In this section, we recall the basic definitions and results regarding the Mukai isomorphism for moduli spaces of complexes on a K3  surface. We  will use these results to determine the restriction of line bundles on the ambient HK variety $\mathcal{M}_i$  to our component $M_i$ of the fixed locus of the antisymplectic involution $\tau$. We follow \cite{mmp} closely regarding Bridgeland stability conditions on K3 surfaces and birational geometry of their moduli spaces.

Let $X$ be a K3 surface and $v\in H_{\Alg}^*(X,\mathbb{Z})$ a primitive Mukai vector with $v^2\geq -2$. Let  $\sigma\in \Stab(X)$ be a generic stability condition with respect to $v$, meaning there are no strictly $\sigma$-semistable objects. Define 
 $$\mathcal{M}\coloneqq \mathcal{M}_\sigma(v)$$ to be the moduli space of $\sigma$-stable objects in $D^b(X)$ with Mukai vector $v$  (and some choice of a phase).

Let $T$ be a scheme. A complex $\mathcal{E}\in D^b(X\times T)$ is called a quasi-family for $\mathcal{M}$ (or simply a quasi-family), (see \cite[Appendix~2]{mukai} or \cite[Definition~4.5]{MR3194493}) if, for every $t\in T$, there is a point $E\in \mathcal{M}$ and some positive integer $\rho$ such that 
$$ \mathcal{E}|_t\simeq E^{\oplus \rho}.$$
If $T$ is connected, then $\rho$ does not depend on $T$ and is called the similitude of $\mathcal{E}$.  A quasi-family $\mathcal{E}\in D^b(X\times\mathcal{M})$ is called a quasi-universal family if, for every scheme $T$  and a quasi-family  $\mathcal{F}\in D^b(X\times T)$, there exists  a morphism $f:T\to \mathcal{M}$ such that $f^*\mathcal{E}$ and $ \mathcal{F}$ are equivalent, i.e., there are
vector bundles $\mathcal{V},\mathcal{W}$ on $T$ and an isomorphism
$$ f^*\mathcal{E}\otimes \pi_T^*\mathcal{V} \simeq \mathcal{F}\otimes \pi_T^*\mathcal{W}, $$
where $\pi_T$ is the projection $\pi_T:X\times T\to T$. A result of Mukai \cite[Theorem~A.5]{mukai} implies that  $\mathcal{M}$  always has  a quasi-universal family $\mathcal{E}\in D^b(\mathcal{M}\times X)$. The Mukai morphism induced by $\mathcal{E}$ is the linear map 
$$  \theta_{\mathcal{E}}:v^\perp_{\Alg} \to \NS(\mathcal{M})=N_1(\mathcal{M})^*,$$
such that for all smooth projective curves $C\subset \mathcal{M}$ and all $w\in v^\perp_{alg}$, we have
$$ \theta_{\mathcal{E}}(w)\cdot C= \frac{1}{\rho}\langle w,v(\Phi_\mathcal{E}\mathcal{O}_C)\rangle, $$
where $\Phi_\mathcal{E}:D^b(\mathcal{M})\to D^b(X)$ denotes the Fourier-Mukai functor induced by $\mathcal{E}$. Note that the condition $w\in v^\perp$ implies that $\theta_{\mathcal{E}}(w)$ is independent of the class of the family $\mathcal{E}$. Specifically, for any vector  bundle $\mathcal{V}$ on $\mathcal{M}$ and $\mathcal{E}'=\mathcal{E}\otimes\pi_\mathcal{M}^*\mathcal{V}$, then 
$$ \theta_{\mathcal{E}}(w) = \theta_{\mathcal{E}'}(w).$$
Indeed, we have $\Phi_\mathcal{E'}\mathcal{O}_C\simeq \Phi_\mathcal{E}\mathcal{V}|_C$. If $r$ denotes the rank of $\mathcal{V}$, then we can find a filtration  
$$ 0\subset \mathcal{V}_1 \subset \mathcal{V}_2 \subset \ldots\subset  \mathcal{V}_r=\mathcal{V}|_E, $$
with $\mathcal{L}_i=\mathcal{V}_{i}/\mathcal{V}_{i-1} $ line bundles. Thus we have 
$$ v(\Phi_{\mathcal{E}'}\mathcal{O}_C)=\sum_{i=1}^rv(\Phi_\mathcal{E}\mathcal{L}_i)=\sum_{i=1}^r  v(\Phi_\mathcal{E}\mathcal{O}_C)+\deg(\mathcal{L}_i)v=rv(\Phi_\mathcal{E}\mathcal{O}_C)+kv,$$
for some integer $k$.  Then the claim follows since $\mathcal{E}'$ has similitude $r\rho$.

We have the following result.
\begin{proposition}(\cite[Theorem~6.10]{MR3194493})
    $\theta_{\mathcal{E}}$ is an isomorphism of lattices.
\end{proposition}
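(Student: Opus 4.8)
The plan is to prove three properties of the linear map $\theta_{\mathcal{E}}$: that it is an isometry from $(v^\perp_{\Alg}, \langle\,,\,\rangle)$, carrying the Mukai pairing, to $(\NS(\mathcal{M}), q)$, carrying the Beauville--Bogomolov--Fujiki (BBF) form; that it is injective; and that it is surjective onto the full integral N\'eron--Severi lattice. Throughout I would assume $v^2>0$, which is the case relevant here---$\mathcal{M}$ is then a projective hyperk\"ahler manifold of dimension $v^2+2$---the isotropic and spherical cases being of a different nature. The overall strategy is reduction, by deformation and wall-crossing, to the Gieseker chamber, where the statement is the theorem of Yoshioka and O'Grady.

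For the isometry property I would work with the full Mukai morphism $\theta_{\mathcal{E}}\colon v^\perp \to H^2(\mathcal{M},\mathbb{Z})$ on the whole orthogonal complement and show it is a Hodge isometry, after which the stated map is recovered by restricting to the $(1,1)$-part, i.e.\ to algebraic classes. The identity $q(\theta_{\mathcal{E}}(w))=\langle w,w\rangle$ is the heart of the matter and is obtained by a Grothendieck--Riemann--Roch computation applied to the quasi-universal family $\mathcal{E}$ on $X\times\mathcal{M}$: expanding $\langle w, v(\Phi_{\mathcal{E}}\mathcal{O}_C)\rangle$ and comparing it with the Fujiki relation for $q$ expresses the Mukai pairing through cup products on $\mathcal{M}$, matching the BBF form exactly once one accounts for the similitude factor $1/\rho$ already built into the definition of $\theta_{\mathcal{E}}$. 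The independence of $\theta_{\mathcal{E}}$ from the choice of family, noted above, guarantees that this pairing statement is unambiguous.

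Given the isometry, injectivity is immediate from the non-degeneracy of the Mukai pairing on $v^\perp$ when $v^2>0$. For surjectivity at the integral level I would argue by wall-crossing. In the Gieseker chamber the result is classical: there $\theta$ is an isometric isomorphism of weight-two Hodge structures onto $H^2(\mathcal{M},\mathbb{Z})$, and its algebraic part is the desired isomorphism $v^\perp_{\Alg}\xrightarrow{\sim}\NS(\mathcal{M})$. For a general generic $\sigma\in\Stab(X)$, I would connect $\sigma$ to a Gieseker chamber by a path in the stability manifold; crossing each wall induces a birational map between the corresponding moduli spaces realized by a derived (auto)equivalence, and since birational hyperk\"ahler manifolds have canonically isomorphic second integral cohomology and the Mukai morphism is compatible with these equivalences, both the lattice $\NS(\mathcal{M})$ and the map $\theta_{\mathcal{E}}$ are transported isomorphically along the path. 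This reduces the general case to the base case.

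The main obstacle I anticipate is the control of the \emph{integral} structure under wall-crossing: one must ensure not merely that the abstract lattices are isomorphic, but that the quasi-universal families on either side of a wall are related by the inducing equivalence so that $\theta_{\mathcal{E}}$ is carried to $\theta_{\mathcal{E}'}$ on the nose, and that surjectivity holds onto $\NS(\mathcal{M})$ rather than onto a merely finite-index sublattice. This is exactly the point where the precise identification of $v^\perp$ with $H^2(\mathcal{M},\mathbb{Z})$ in the base case, together with the deformation-invariance of the whole package $(\mathcal{M},\mathcal{E},\theta_{\mathcal{E}})$, becomes indispensable.
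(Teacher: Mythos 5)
The paper does not prove this statement at all: it is imported verbatim from Bayer--Macr\`i (\cite[Theorem~6.10]{MR3194493}), and the only argument the paper supplies in its vicinity is the well-definedness check that $\theta_{\mathcal{E}}(w)$ for $w\in v^\perp$ is independent of the choice of quasi-universal family. So there is no ``paper's own proof'' to compare against; what you have written is a sketch of the proof of the cited theorem itself. As such, your outline is essentially the correct strategy and matches the one in the reference: the base case is Yoshioka's and O'Grady's theorem for Gieseker moduli spaces, the isometry comes from a Grothendieck--Riemann--Roch comparison of the Mukai pairing with the Beauville--Bogomolov--Fujiki form, and the general case is obtained by transporting the whole package $(\mathcal{M},\mathcal{E},\theta_{\mathcal{E}})$ back to the Gieseker chamber. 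The main deviation is that Bayer--Macr\`i do not walk along a path crossing walls one at a time; they identify $\mathcal{M}_\sigma(v)$ in one step with a moduli space of Gieseker-stable \emph{twisted} sheaves on a Fourier--Mukai partner of $X$ and invoke Yoshioka's theorem in that twisted setting, which is precisely how the integrality of the surjectivity (your self-identified obstacle) is controlled; your path-of-walls variant can be made to work using the later results of \cite{mmp}, but it is not more elementary. Two further caveats: you restrict to $v^2>0$, whereas the proposition as stated in the paper's setup allows $v^2\geq -2$ (for $v^2=0$ the correct statement involves $v^\perp/\mathbb{Z}v$, and for $v^2=-2$ the moduli space is a point), and your direct GRR computation of $q(\theta_{\mathcal{E}}(w))$ on $\mathcal{M}_\sigma(v)$ presupposes that this space is already known to be a hyperk\"ahler manifold of $\mathrm{K3}^{[n]}$ type, which itself is part of what the reduction to the Gieseker chamber establishes; the order of the steps should be arranged so that the reduction comes first.
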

\begin{remark}\label{mukaiflopdesc}
     Let  $\mathcal{M}\dasharrow\mathcal{M}'$ be a small birational map between two moduli spaces  obtained by crossing a wall on the wall-chamber structure of the space of stability conditions induced by $v$.  Since $\mathcal{M}$ and $\mathcal{M}'$ are isomorphic in codimension 1, we  obtain a canonical isomorphism 
    \begin{equation}\label{canonicaliso}
    \NS(\mathcal{M})\simeq \NS(\mathcal{M}').
    \end{equation}    
     Moreover, we can choose (quasi)-universal families $\mathcal{E}\in D^b(X\times \mathcal{M})$ and $\mathcal{E}'\in D^b(X\times \mathcal{M}')$ with the same similitude, such that  $\mathcal{E}\simeq \mathcal{E}'$ under the  birational map $\mathcal{M}\dasharrow \mathcal{M}'$ (see \cite[Page~733]{MR3194493} for details). Therefore  we have compatibility of the Mukai isomorphisms $\theta_{\mathcal{E}},\theta_{\mathcal{E}'}$, and the isomorphism (\ref{canonicaliso}).
\end{remark}

\section{Mukai flops and fixed locus of involutions}\label{sectio3}
In this section we state a result due to Flapan, Macr\`i, O'Grady and Sacc\`a in \cite[Section~5]{Flapan_2021}, which describes the restriction of a Mukai flop $\mathcal{M}\dasharrow\mathcal{M}'$ to a connected component of the fixed locus of an antisymplectic involution $\tau$. This result provides the local picture of the geometry of the birational maps $M_i\dasharrow M_{i+1}$ appearing in (\ref{flip2}). 

Let $\mathcal{M}$ be  an open subset (in the analytic or Zariski topology) of a HK manifold, and let $\mathcal{P}$ be a coisotropic subvariety of $\mathcal{M}$, which is isomorphic to a projective bundle (in the analytic or Zariski topology) $\mathbb{P}(\mathcal{V})\to \mathcal{Z}$ for some vector bundle $\mathcal{V}$ over a smooth manifold $\mathcal{Z}$. Assume that there is an antisymplectic involution 
$\tau$ acting on $\mathcal{M}$ which preserves the projective bundle $\pi:\mathcal{P}\to \mathcal{Z}$ and induces an involution  $\tau_\mathcal{Z}$  on $\mathcal{Z}$. We can linearize this action in order to obtain an action of $\tau$ on $\mathcal{V}$. Let $\varphi:\mathcal{M}\dasharrow \mathcal{M}'$ be the Mukai flop along $\mathcal{P}$, and let $\mathcal{P}'=\mathbb{P}(\mathcal{V}^\vee)\to \mathcal{Z}$  be  the exceptional locus of $\varphi^{-1}$. We also assume that there is an antisymplectic involution  $\tau'$ in $\mathcal{M}'$ that agrees with $\tau$ under the isomorphism $\mathcal{M}\setminus \mathcal{P}\simeq \mathcal{M}'\setminus \mathcal{P}'$, and restricts to the dual action on $\mathcal{P}'=\mathbb{P}(\mathcal{V}^\vee)$. 

\begin{proposition}(\cite[Proposition~5.2]{Flapan_2021})\label{mukaifloprest}
    Under the conditions of the previous paragraph, consider a connected component $\mathcal{F}\subset \Fix (\tau, \mathcal{M})$ which is not contained in $\mathcal{P}$ and let $\mathcal{F}'$ be its proper transform in $\mathcal{M}'$. Then the Mukai flop $\mathcal{M}\dasharrow\mathcal{M}'$ induces a birational map $\mathcal{F}\dasharrow\mathcal{F}'$ which factors as a series of disjoint standard flips (not necessarily small, i.e., possibly divisoral extractions or contractions), one for each connected component of $\mathcal{P}\cap \mathcal{F}$. More precisely, each  connected component $\Gamma\subset \mathcal{P}\cap \mathcal{F}$  can be identified with a projective bundle 
    $$  p:\Gamma\simeq\mathbb{P}(\mathcal{V}_\Gamma)\to \mathcal{W}, $$
    where $\mathcal{W}$ is a connected component of $\Fix(\tau_\mathcal{Z},\mathcal{Z})$, and $\mathcal{V}_\Gamma $ is one of the two eigenbundles $\mathcal{V}^+$ or $\mathcal{V}^-$  of $\mathcal{V}|_\mathcal{W}$, whose fibers over a closed point $w\in W$ are the  eigenspaces 
    $$ V^+=\{v\in \mathcal{V}|_{w}: \tau(v)=v\},\qquad V^-=\{v\in \mathcal{V}|_w: \tau(v)=-v\},$$
    respectively. Moreover if $\mathcal{V}_\Gamma'=\Ann(\mathcal{V}_\Gamma)\subset\mathcal{V}|_\mathcal{W}^\vee $, then $\mathcal{N}_{\Gamma/\mathcal{F}}\simeq \mathcal{O}_p(-1)\otimes p^*\mathcal{V}_\Gamma' $ and the flip corresponding to $\Gamma$ is the standard flip of $\mathcal{F}$  along  $\Gamma$ replacing $\Gamma\simeq\mathbb{P}(\mathcal{V}_\Gamma)$ by $\Gamma'\simeq\mathbb{P}(\mathcal{V}_\Gamma')$.
\end{proposition}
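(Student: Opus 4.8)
The plan is to analyze the Mukai flop locally and then intersect everything with the fixed locus of $\tau$. First I would set up the standard local model of a Mukai flop along $\mathcal{P}=\mathbb{P}(\mathcal{V})\to\mathcal{Z}$. Because $\mathcal{P}$ is coisotropic of the expected type, the symplectic form identifies a neighborhood of $\mathcal{P}$ in $\mathcal{M}$ with a neighborhood of the zero section in the total space of $\Omega_{\mathcal{P}/\mathcal{Z}}^{\,}$ pulled back appropriately; more precisely the normal bundle $\mathcal{N}_{\mathcal{P}/\mathcal{M}}$ is canonically the relative cotangent bundle $\Omega_{\mathcal{P}/\mathcal{Z}}$, so that blowing up $\mathcal{P}$ and blowing down the other ruling realizes the flop with $\mathcal{P}'=\mathbb{P}(\mathcal{V}^\vee)\to\mathcal{Z}$. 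I would record the standard Euler-sequence description of $\Omega_{\mathbb{P}(\mathcal{V})/\mathcal{Z}}$ in terms of $\mathcal{O}_\pi(-1)$ and $\pi^*\mathcal{V}$, since this is what feeds the normal-bundle computation at the end.

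\medskip

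Next I would linearize the $\tau$-action. By hypothesis $\tau$ preserves $\mathcal{P}$ and covers an involution $\tau_{\mathcal{Z}}$ on $\mathcal{Z}$, so after linearizing we get an involution of $\mathcal{V}$ over $\tau_{\mathcal{Z}}$ and hence eigenbundle decompositions over each component $\mathcal{W}\subset\Fix(\tau_{\mathcal{Z}},\mathcal{Z})$, namely $\mathcal{V}|_{\mathcal{W}}=\mathcal{V}^+\oplus\mathcal{V}^-$. The key geometric point is that a point of $\mathbb{P}(\mathcal{V}|_w)$ is $\tau$-fixed exactly when the corresponding line is an eigenline, which forces it to lie in $\mathbb{P}(\mathcal{V}^+)$ or $\mathbb{P}(\mathcal{V}^-)$; since $\mathcal{F}$ is a single connected component of the fixed locus and is \emph{not} contained in $\mathcal{P}$, each connected component $\Gamma$ of $\mathcal{P}\cap\mathcal{F}$ is then identified with exactly one of the projective eigenbundles $\mathbb{P}(\mathcal{V}_\Gamma)\to\mathcal{W}$ with $\mathcal{V}_\Gamma\in\{\mathcal{V}^+,\mathcal{V}^-\}$. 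This gives the asserted description of $\Gamma$ as a projective bundle over $\mathcal{W}$.

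\medskip

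The heart of the argument is the normal-bundle computation. Using the local symplectic model, $\mathcal{N}_{\mathcal{F}/\mathcal{M}}$ is the $(-1)$-eigenspace of the $\tau$-action on $\mathcal{N}_{\mathcal{M}}$ restricted along $\mathcal{F}$ (antisymplectic means $\tau$ acts by $-1$ on the symplectic form, pairing the $+$ and $-$ eigenspaces). Restricting the Euler sequence for $\Omega_{\mathcal{P}/\mathcal{Z}}=\mathcal{N}_{\mathcal{P}/\mathcal{M}}$ to $\Gamma$ and taking the appropriate eigenspace, I would identify $\mathcal{N}_{\Gamma/\mathcal{F}}$ with $\mathcal{O}_p(-1)\otimes p^*\mathcal{V}_\Gamma'$ where $\mathcal{V}_\Gamma'=\Ann(\mathcal{V}_\Gamma)\subset\mathcal{V}|_{\mathcal{W}}^\vee$ is the complementary eigenbundle of the dual. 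This is exactly the normal-bundle profile of a standard flip centered at $\Gamma\simeq\mathbb{P}(\mathcal{V}_\Gamma)$, so blowing up $\Gamma$ and contracting the other ruling produces $\Gamma'\simeq\mathbb{P}(\mathcal{V}_\Gamma')$ sitting inside $\mathcal{F}'$; carrying this out on each component $\Gamma$ independently (they are disjoint, being distinct components of $\mathcal{P}\cap\mathcal{F}$) exhibits $\mathcal{F}\dasharrow\mathcal{F}'$ as a disjoint union of such flips.

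\medskip

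The main obstacle I anticipate is justifying the local symplectic/normal-bundle identifications $\tau$-equivariantly and keeping careful track of which eigenbundle appears where, so that $\mathcal{V}_\Gamma$ on the source pairs with its annihilator $\mathcal{V}_\Gamma'$ on the target rather than with its own dual. Because $\tau$ is antisymplectic, the pairing between the $\pm$ eigenbundles induced by the symplectic form is what makes $\Ann(\mathcal{V}_\Gamma)$ (and not $\mathcal{V}_\Gamma^\vee$) the correct target eigenbundle; I would treat this sign bookkeeping as the technical crux and verify it on fibers over a fixed point $w\in\mathcal{W}$ before globalizing. A secondary subtlety is allowing the individual flips to be divisorial (the parenthetical "not necessarily small") when one of the eigenbundles has full rank, which I would handle simply by permitting $\mathcal{V}_\Gamma'$ to have rank zero or the relative dimensions to degenerate, leaving the normal-bundle formula formally unchanged.
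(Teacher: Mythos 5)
This proposition is not proved in the paper at all: it is quoted directly from \cite[Proposition~5.2]{Flapan_2021}, so there is no in-paper argument to compare against. Your proposal is correct and reproduces the standard proof of the cited result: use the local model of the Mukai flop to identify $\mathcal{N}_{\mathcal{P}/\mathcal{M}}$ with $\Omega_{\mathcal{P}/\mathcal{Z}}$ (the identification being $\tau$-anti-equivariant because $\tau^*\omega=-\omega$), note that $\Fix(\tau,\mathbb{P}(\mathcal{V}|_w))=\mathbb{P}(V^+)\sqcup\mathbb{P}(V^-)$ so each $\Gamma$ is a projective eigenbundle, and compute $\mathcal{N}_{\Gamma/\mathcal{F}}=(\mathcal{N}_{\mathcal{P}/\mathcal{M}}|_\Gamma)^{+}=(\Omega_{\mathcal{P}/\mathcal{Z}}|_\Gamma)^{-}=\mathcal{O}_p(-1)\otimes p^*\Ann(\mathcal{V}_\Gamma)$, which is precisely the normal-bundle profile of a standard flip along $\Gamma$. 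You correctly identify the antisymplectic sign as the reason the annihilator, rather than the full dual, appears.
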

\section{Double dual functor and higher Ext groups}\label{section4}
Now we prove some preliminary results describing  the action of (derived) dual functors on homomorphisms and higher Ext's. The main result of this section is Corollary \ref{eigenspace} and Lemma \ref{lemmas4}, which we use  to describe the geometry of the standard flips appearing in Proposition \ref{mukaifloprest}. Here we will work in full generality, so $X$ will denote an arbitrary smooth projective variety. We will follow \cite{MR1804902} for our signs convention.  

Let $X$ be a smooth projective variety of dimension $n$. Fix a complex $R\in D^b(X)$ of vector bundles on $X$. We define the contravariant functor
$$ \Psi_{R}: D^b(X)^{op}\to D^b(X),\qquad \Psi_R(E)=\SheafHom(E,R).$$
 Let $E\in D^b(X)$ be another complex of vector bundles. For every integer $i$, there is   a canonical isomorphism 
\begin{equation}\label{morphim1}
    \Psi_R(E)[i]\overset{\sim}{\to} \Psi_{R[i]}(E)
\end{equation}
without the intervention of signs. More precisely, (\ref{morphim1}) is the morphism of complexes given in degree $p$ by the morphism of sheaves
$$ \prod_{r+s=p+i}\SheafHome(E^{-r},R^{s})\to  \prod_{r'+s'=p}\SheafHome(E^{-r'},R^{s'+i}) $$
obtained by using the identification $\SheafHome(E^{-r},R^{s})= \SheafHome(E^{-r'},R^{s'+i})$ if $(r,s)=(r',s'+i)$ and the zero map otherwise. We  also have a  canonical morphism
\begin{equation}\label{morphism3}
    \eta^R_E: E\to \Psi^2_{R}(E),
\end{equation}
which corresponds to the morphism of  complexes such that its degree $p$ part 
$$ E^p\to  \prod_{r+s+t=p}\SheafHome(\SheafHome(E^{r},R^{-s}),R^{t})$$ 
is obtained by  using the canonical morphism  
$$E^p\to \SheafHome(\SheafHome(E^p,R^{p+q}),R^{p+q}) $$
multiplied by $(-1)^{pq}$ if $r=p$ and $-s=t=p+q$, and zero otherwise. Analogously, we have the morphism 
$$\nu_{[i]}:\Psi^2_{R}(E)\to\Psi^2_{R[i]}(E),$$
which involves  the sign  $(-1)^{ip}$ in degree $p$. With these sign conventions, we have a commutative diagram
\begin{equation}\label{dobledual}
    \xymatrixcolsep{3pc}\xymatrix{
E\ar[r]^-{\eta^R_E} \ar[dr]_{\eta^{R[i]}_E} & \Psi^2_{R}(E) \ar[d]^{\nu_{[i]}}\\
 &\Psi^2_{R[i]}(E) 
 }
\end{equation}
 (see  \cite[Page~15]{MR1804902} for details). For each integer $i$ and $A,B\in D^b(X)$, we set 
 $$ \Ext^i(A,B)\coloneqq \Hom_{D^b(X)}(A,B[i]).$$
When $A,B$ are complexes of vector bundles, we obtain a natural linear map   $$\Psi^i_R:\Ext^i(A,\Psi_R(B))\to \Ext^i(B,\Psi_R(A)),$$ by sending a morphism $A\overset{\alpha}{\to} \Psi_R(B)[i]\simeq \Psi_{R[i]}(B)$ (using \ref{morphim1}), to the composition 
\begin{equation}\label{psiaction}
  \xymatrixcolsep{3pc}\xymatrix{
   B \ar[r]^{\eta^R_B} & \Psi_R^2(B) 
   \ar[r]^{\nu_{[i]}} & \Psi_{R[i]}^2(B) \ar[r]^-{\Psi_{R[i]}(\alpha)} &  \Psi_{R[i]}(A) =\Psi_R(A)[i].
  }  
\end{equation}  
Using the isomorphism (\ref{morphim1})  and the commutativity of (\ref{dobledual}), we obtain the following commutative diagram
\begin{equation}\label{square1}
    \xymatrix{
\Ext^i(A,\Psi_{R}(B))\ar[r]^{\Psi^i_R} \ar[d]^{\simeq} & \Ext^i(B,\Psi_{R}(A)) \ar[d]^\simeq\\
\Hom(A,\Psi_{R[i]}(B))\ar[r]^{\Psi^0_{R[i]}} & \Hom(B,\Psi_{R[i]}(A)).
 }
\end{equation}
 We will also use the following isomorphism coming from adjunction
$$ \Ext^i(B,\Psi_{R}(A))= \Hom(B,\Psi_{R[i]}(A)) \simeq \Hom(B\otimes A,R[i]).$$
Note that here we use the isomorphism  (\ref{morphim1}), which doesn't involve any sign. 
\begin{lemma}\label{lemma4.1}
    Let $A,B\in D^b(X)$ be two complexes of vector bundles. There is a commutative diagram
    \begin{equation}\label{square7}
        \xymatrix{
\Ext^i(A,\Psi_{R}(B))\ar[r]^{\Psi^i_R} \ar[d]^{\simeq} & \Ext^i(B,\Psi_{R}(A)) \ar[d]^{\simeq} \\
\Hom(A\otimes B,R[i] )\ar[r]^{\circ \mu} & \Hom(B\otimes A,R[i]),
 }
    \end{equation}
 where $\mu$  is the morphism of complexes $$\mu: B\otimes A\to A\otimes B,$$
 which sends   $a\otimes b\mapsto (-1)^{pq} b\otimes a$ for $a\in A^p$ and $b\in B^q$.
\end{lemma}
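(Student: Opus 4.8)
The plan is to reduce the statement to the case $i=0$ and then to a cochain-level computation whose only real content is the matching of Koszul signs.

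\emph{Step 1 (removing the shift).} The square \eqref{square1} has already been established, and combined with the triangle \eqref{dobledual} it shows that, under the sign-free identifications \eqref{morphim1}, the map $\Psi^i_R$ is carried to $\Psi^0_{R[i]}$. Since the adjunction isomorphisms $\Hom(A,\Psi_{R[i]}(B))\simeq\Hom(A\otimes B,R[i])$ and $\Hom(B,\Psi_{R[i]}(A))\simeq\Hom(B\otimes A,R[i])$ also carry no sign, as noted just before the statement, stacking \eqref{square1} on top of these adjunctions reduces the lemma to the case $i=0$ with $R$ replaced by an arbitrary bounded complex of vector bundles $S\coloneqq R[i]$. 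Concretely, it suffices to prove that, under the tensor--hom adjunction, the map $\Psi^0_S\colon\Hom(A,\Psi_S(B))\to\Hom(B,\Psi_S(A))$ corresponds to precomposition with $\mu\colon B\otimes A\to A\otimes B$.

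\emph{Step 2 (reduction to cochains).} Because $A$, $B$ and $S$ are bounded complexes of vector bundles, $\SheafHom$ is computed naively, $\otimes=\otimes^{L}$, and morphisms in $D^b(X)$ between such objects are represented by honest cochain maps. Hence it is enough to fix a cochain representative $\alpha\colon A\to\Psi_S(B)$, with adjoint $\widetilde{\alpha}\colon A\otimes B\to S$, and to check on the nose that the adjoint of
\[ \Psi^0_S(\alpha)=\Psi_S(\alpha)\circ\eta^S_B\colon B\longrightarrow \Psi_S(A) \]
equals $\widetilde{\alpha}\circ\mu$; the asserted identity of morphisms in $D^b(X)$ then descends.

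\emph{Step 3 (the sign computation).} I would evaluate both adjoints on a homogeneous local section $b\otimes a$ with $a\in A^{p}$, $b\in B^{q}$. Unwinding the definitions, $\eta^S_B$ sends $b$ to (a sign times) the evaluation-at-$b$ functional on $\Psi_S(B)=\SheafHom(B,S)$, the map $\Psi_S(\alpha)=\SheafHom(\alpha,S)$ is precomposition with $\alpha$, and the adjunction is plain evaluation. Discarding signs, this reproduces the ungraded identity $\widetilde{\Psi^0_S(\alpha)}(b\otimes a)=\widetilde{\alpha}(a\otimes b)$. The only sign that survives is the factor $(-1)^{pq}$ built into the definition \eqref{morphism3} of $\eta^S_B$, and this is exactly the Koszul sign in $\mu(b\otimes a)=(-1)^{pq}\,a\otimes b$, so $\widetilde{\Psi^0_S(\alpha)}(b\otimes a)=(-1)^{pq}\widetilde{\alpha}(a\otimes b)=\widetilde{\alpha}(\mu(b\otimes a))$.

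The main obstacle is precisely this sign bookkeeping: one must confirm that the tensor--hom adjunction and the functoriality $\SheafHom(-,S)$ introduce no extra sign under the conventions of \cite{MR1804902} (the same conventions already used for \eqref{morphim1} and \eqref{psiaction}), so that the lone sign $(-1)^{pq}$ coming from the double-dual unit $\eta^S_B$ coincides with the Koszul sign defining $\mu$. Everything else is formal once Step 1 has removed the shift.
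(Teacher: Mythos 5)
Your proposal is correct and follows essentially the same route as the paper: reduce to $i=0$ via the square \eqref{square1}, represent $\alpha$ by an honest cochain map, unwind $\Psi^0_R(\alpha)=\Psi_R(\alpha)\circ\eta^R_B$ degreewise, and observe that the only surviving sign is the $(-1)^{pq}$ built into $\eta^R$, which is precisely the Koszul sign defining $\mu$. No substantive differences.
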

\begin{proof}
    By replacing $R$ by $R[i]$ and using the commutativity of  (\ref{square1}), it suffices to consider the
    case when $i=0$. Let $A,B\in D^b(X)$ be  two complexes of vector bundles, and let $\alpha:A\to \Psi_{R}(B) $ be a morphism. We may assume that $\alpha$ is given by an honest map of complexes. Let  $p,q$ be integers and denote by $\alpha^{pq}$ the map  of sheaves
    $$ \alpha^{pq}:A^p\to \SheafHome(B^q,R^{p+q})$$ 
    given by the composition
    $$ A^p\to \prod_{r+s=p}\SheafHome(B^{-r},R^{s})\to \SheafHome(B^q,R^{p+q}).$$
    By equation  (\ref{psiaction}), the map $\Psi_R^0(\alpha):B\to \Psi_{R}(A) $ is the composition 
    $$\xymatrixcolsep{4pc}\xymatrix{
    B \ar[r]^-{\eta^R_B} & \Psi_R^2(B) \ar[r]^-{\Psi_R(\alpha)} & \Psi_R(A), 
    }    $$
      and so there is a commutative diagram (now in  degree $q$)
      $$  \xymatrixcolsep{4pc}\xymatrix{
B^q \ar[d] \ar[rr] & & \prod_{r+s=q}\SheafHome(A^{-r},R^{s}) \ar[d]  \\
\SheafHome(\SheafHome(B^q,R^{p+q}),R^{p+q}) \ar[rr]^-{\SheafHome(\alpha^{pq}(-),R^{p+q})} & & \SheafHome(A^p,R^{p+q}),
 }$$
    which includes the sign $(-1)^{pq}$ on the left vertical map (see the definition of $\eta^R$). Since the vertical maps in (\ref{square7}), do not require the change of sign,  the lemma follows. 
\end{proof}
Since $X$ is smooth, c$\Psi_R^i:\Ext^i(A,\Psi(B))\to \Ext^i(B,\Psi(A))$ for any pair of elements $A,B\in D^b(X)$ by choosing resolution $\tilde{A}\to A,\tilde{B}\to B$ by complexes of vector bundles and applying (\ref{psiaction}) to the pair $\tilde{A},\tilde{B}$.  

When $A=B$,  we obtain an action of $\Psi_R$ on $\Ext^i(A,\Psi_R(A))$ by an involution. We write 
$$ \Ext^i(A,\Psi_{R}(A))^{\pm }=\{\alpha \in \Ext^i(A,\Psi_R(A)): \alpha^{\Psi_R}=\pm \alpha\}. $$
The main result of this Section is the following. 
\begin{corollary}\label{eigenspace}
   Let $A\in D^b(X)$ (not necessarily a complex of vector bundles). Then  there is an isomorphism 
    $$ \Ext^i(A,\Psi_{R}(A))^{+}\simeq \Ext^i(S^2A,R),  $$
    where $S^2A$ is  a derived symmetric square.\footnote{For a definition of $S^2A$ see, e.g.,  \cite[Definition~3.1]{derivedsym}.} 
\end{corollary}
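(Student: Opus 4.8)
\emph{Proof proposal.} The plan is to transport the involution $\Psi_R^i$ to a mapping space out of $A\otimes A$ and to recognize its $+1$-eigenspace as the maps out of the derived symmetric square. Since both the derived symmetric square and the involution $\Psi_R^i$ are defined on all of $D^b(X)$ (the latter by the locally free resolution procedure described just before the statement, using smoothness of $X$), I may assume throughout that $A$ is represented by a bounded complex of vector bundles; the identifications produced below will be manifestly independent of this choice.

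First I would assemble the canonical isomorphisms of Section \ref{section4}. Combining the sign-free identification (\ref{morphim1}) with the adjunction $\Hom(A,\Psi_{R[i]}(A))\simeq\Hom(A\otimes A,R[i])$ gives
$$ \Ext^i(A,\Psi_R(A))\;\simeq\;\Hom(A\otimes A,R[i])\;=\;\Ext^i(A\otimes A,R).$$
By Lemma \ref{lemma4.1} applied with $B=A$, the involution $\Psi_R^i$ on the left corresponds, under this isomorphism, to precomposition $\circ\,\mu$ with the signed swap $\mu\colon A\otimes A\to A\otimes A$, $a\otimes b\mapsto(-1)^{pq}b\otimes a$ (here the two vertical adjunction isomorphisms of the square (\ref{square7}) coincide once $A=B$). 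Thus it suffices to identify the $+1$-eigenspace of $\circ\,\mu$ on $\Hom(A\otimes A,R[i])$ with $\Hom(S^2A,R[i])$.

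To finish I would use that $\mu$ is exactly the symmetry constraint of the tensor structure on $D^b(X)$, so that $\mu^2=\mathrm{id}$. Working over $\mathbb{C}$ and using that $D^b(X)$ is idempotent-complete, the symmetrizer $\tfrac12(\mathrm{id}+\mu)$ and antisymmetrizer $\tfrac12(\mathrm{id}-\mu)$ are orthogonal idempotents splitting $A\otimes A\simeq S^2A\oplus\wedge^2A$, where $S^2A$ is the derived symmetric square of \cite[Definition~3.1]{derivedsym} and $\mu$ restricts to $+\mathrm{id}$ on $S^2A$ and $-\mathrm{id}$ on $\wedge^2A$. Applying the additive contravariant functor $\Hom(-,R[i])$ turns the idempotent $\tfrac12(\mathrm{id}+\mu)$ into the idempotent $\phi\mapsto\tfrac12(\phi+\phi\circ\mu)$, whose image is on the one hand $\{\phi:\phi\circ\mu=\phi\}$, the $+1$-eigenspace, and on the other hand the summand $\Hom(S^2A,R[i])$. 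Hence $\Ext^i(A,\Psi_R(A))^{+}\simeq\Hom(S^2A,R[i])=\Ext^i(S^2A,R)$.

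The step I expect to be the main obstacle is reconciling the Koszul sign $(-1)^{pq}$ in the swap $\mu$ of Lemma \ref{lemma4.1} with the symmetry isomorphism implicitly built into \cite[Definition~3.1]{derivedsym}: one must verify that $\mu$ is genuinely the symmetric monoidal braiding, so that its $+1$-eigenspace is the symmetric and not the exterior part. A lesser point to confirm is that the splitting $A\otimes A\simeq S^2A\oplus\wedge^2A$ and the eigenspace identification are compatible with replacing $A$ by a locally free resolution; this follows since the functors involved are exact on $D^b(X)$ and the isomorphisms of Section \ref{section4} are natural.
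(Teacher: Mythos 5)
Your proposal is correct and follows essentially the same route as the paper: the paper's proof simply invokes Lemma \ref{lemma4.1} to identify $\Psi_R^i$ with precomposition by the signed swap $\mu$ on $\Hom(A\otimes A,R[i])$ and then reduces the general case to the locally free one via well-definedness of the derived symmetric square. Your explicit symmetrizer/antisymmetrizer splitting $A\otimes A\simeq S^2A\oplus\wedge^2A$ just fills in the step the paper leaves implicit.
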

\begin{proof}
    The statement follows from Lemma \ref{lemma4.1} when $A$ is a complex of vector bundles.  The general case follows since the  derived symmetric square is well defined i.e., doesn't depend (up to isomorphism) on the chosen  resolution $\tilde{A}\to A$.
\end{proof}
We also define the action of $\Psi_R$ on $\Ext^i(\Psi_R(A),A)$ via the Serre duality
$$ \Ext^i(\Psi_R(A),A)\simeq \Ext^{n-i}(A,\Psi_{R\otimes \omega_X}(A))^*,$$
obtained by using  the dual action of $\Psi_{R\otimes \omega_X}$  on the right hand side. Then the  eigenbundles satisfy the relations
\begin{equation}\label{eigenrelations}
    \Ext^i(\Psi_R(A),A)^{\pm}=\Ann(\Ext^{n-i}(A,\Psi_{R\otimes \omega_X}(A))^{\mp}). 
\end{equation}
Indeed, this is just a consequence of the general fact that  if $\tau $ is an involution on a vector space $V$, and $V^*$ is endowed with the dual action, then 
$$ V^\pm =\Ann ((V^*)^\mp).$$

We will also use the following useful fact. Assume that $\eta^R_E:E\to \Psi_R^2(E)$ is an isomorphism  for every $E$ (e.g, if $R$ is a shifted  line bundle in $X$). Let $\alpha\in \Ext^1(T,\Psi_R(T))$  such that $\alpha^{\Psi_R}\in \{\pm \alpha\}.$ Then $E$ is $\Psi$-invariant and so there is an isomorphism $t:E\to \Psi_R(E)$. We relate the action of $\Psi_R$ on $t$ and $\alpha$ as follows. Consider a morphism of exact triangles 
\begin{equation}\label{tsaction}
\resizebox{0.91\hsize}{!}{%
    $\xymatrixcolsep{5.5pc}\xymatrix{ \Psi_R(A)\ar[r]^{u} \ar[d]^r & E  
\ar[d]^t \ar[r]^v & A \ar[r]^{\alpha} \ar[d]^{\eta_A^R} & \Psi_R(A)[1]  \ar[d]^{r[1]} \\ 
 \Psi_R(A)\ar[r]^{\Psi_R(v)} & \Psi_R(E)  \ar[r]^{\Psi_R(u)} & \Psi^2(A) \ar[r]^{\Psi_{R[1]}(\alpha)\circ\nu_{[1]}} & \Psi_R(A)[1]. }$%
 }
\end{equation}
We have the following result.
\begin{lemma}\label{lemmas4}
    $t^{\Psi_R}=\pm t\iff \alpha^{\Psi_R}=\pm \alpha$.
    \begin{proof}
    By comparing the middle square of (\ref{tsaction}) with the following commutative diagram
    \begin{equation*}
        \xymatrix{
        E\ar[r]^v \ar[d]^{\eta_E^R}& A \ar[d]^{\eta_A^R} \\
        \Psi_R^2(E) \ar[d]^{\Psi_R(t)} \ar[r]^{\Psi_R^2(v)} &\Psi_R^2(A) \ar[d]^{\Psi_R(r)} \\
        \Psi_R(E) \ar[r]^{\Psi_R(u)} & \Psi^2_R(A),
        }
    \end{equation*}
    we obtain that $t^{\Psi_R}=\pm t \iff \Psi_R(r)=\pm \Id_{\Psi^2_R(A)}\iff r=\pm \Id_{\Psi_R(A)}.$ Since $\alpha^{\Psi_R}=\Psi_{R[1]}(\alpha)\circ \nu_{[1]}\circ \eta_A^R$, from the right square of (\ref{tsaction}) we conclude that $r=\pm \Id_{\Psi_R(A)}\iff\alpha^{\Psi_R}=\pm \alpha$ and the Lemma follows.
    \end{proof}
\end{lemma}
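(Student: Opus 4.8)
The plan is to extract everything from the morphism of exact triangles (\ref{tsaction}), reading its bottom row as the $\Psi_R$-image of the top row and tracking how a single sign propagates through the three vertical comparison maps $r$, $t$ and $\eta^R_A$. Recall that $E$ is the extension of $A$ by $\Psi_R(A)$ classified by $\alpha\in\Ext^1(A,\Psi_R(A))$, so the entire picture is governed by $\alpha$, and the isomorphism $t\colon E\to\Psi_R(E)$ exists exactly because $\alpha^{\Psi_R}\in\{\pm\alpha\}$ forces $\Psi_R(E)$ to be the isomorphic extension. The heart of the proof is the chain of equivalences
$$\alpha^{\Psi_R}=\pm\alpha\iff r=\pm\Id_{\Psi_R(A)}\iff \Psi_R(r)=\pm\Id_{\Psi^2_R(A)}\iff t^{\Psi_R}=\pm t,$$
whose three links I would establish one at a time.

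First I would record the two instances of the action formula (\ref{psiaction}) that I need, namely $\alpha^{\Psi_R}=\Psi_{R[1]}(\alpha)\circ\nu_{[1]}\circ\eta^R_A$ and $t^{\Psi_R}=\Psi_R(t)\circ\eta^R_E$. For the first link, the right-hand square of (\ref{tsaction}) reads $r[1]\circ\alpha=\Psi_{R[1]}(\alpha)\circ\nu_{[1]}\circ\eta^R_A$, and its right-hand side is precisely $\alpha^{\Psi_R}$; hence $\alpha^{\Psi_R}=r[1]\circ\alpha$. For the middle link I would use that $\eta^R$ is a natural isomorphism, so $\Psi_R$ is an anti-autoequivalence and in particular faithful, whence $\Psi_R(r)=\pm\Id\iff r=\pm\Id$. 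For the last link I would stack the naturality square of $\eta^R$ for $v$ (the top square of the displayed $3\times 2$ diagram) on top of the $\Psi_R$-image of the left-hand square of (\ref{tsaction}) (its bottom square); then the left vertical composite is $\Psi_R(t)\circ\eta^R_E=t^{\Psi_R}$ and the right vertical composite is $\Psi_R(r)\circ\eta^R_A$. Comparing the resulting commuting outer rectangle with the middle square $\Psi_R(u)\circ t=\eta^R_A\circ v$ of (\ref{tsaction}) yields $\Psi_R(u)\circ t^{\Psi_R}=\Psi_R(r)\circ\Psi_R(u)\circ t$, which links $t^{\Psi_R}$ to $\Psi_R(r)$. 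Chaining the three links gives the lemma.

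The main obstacle is upgrading each link from a single implication to a genuine two-sided equivalence. The forward directions ($r=\pm\Id\Rightarrow\alpha^{\Psi_R}=\pm\alpha$, and $\Psi_R(r)=\pm\Id\Rightarrow t^{\Psi_R}=\pm t$) are immediate from the square identities above. The reverse directions are delicate: from $\alpha^{\Psi_R}=\pm\alpha$ one only gets $r[1]\circ\alpha=\pm\alpha$, and from $t^{\Psi_R}=\pm t$ one only gets $(\Psi_R(r)\mp\Id)\circ\Psi_R(u)\circ t=0$, i.e. $\Psi_R(r)\mp\Id$ factors through the cofiber of $\Psi_R(u)$. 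To collapse these to $r=\pm\Id$ I would invoke the rigidity of the intended setting: the objects are Bridgeland stable, hence simple, so $\End(E)=\End(A)=\mathbb{C}$ forces $r$ (and $t^{\Psi_R}$ relative to $t$) to be a scalar, and the involutivity of the $\Psi_R$-action, $(t^{\Psi_R})^{\Psi_R}=t$, then pins that scalar to $\pm1$ with the sign matching $\alpha$. Equivalently, the ambiguity in completing (\ref{tsaction}) to a morphism of triangles and the residual factorizations above are both measured by $\Ext^{-1}(\Psi_R(A),\Psi^2_R(A))$, which vanishes in this setting; once this group is zero, $r$ is the unique completion and all three equivalences hold automatically, so the lemma follows without any appeal to simplicity.
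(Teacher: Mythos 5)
Your proof follows the paper's argument essentially verbatim: the same chain $\alpha^{\Psi_R}=\pm\alpha\iff r=\pm\Id\iff\Psi_R(r)=\pm\Id\iff t^{\Psi_R}=\pm t$, obtained from the right-hand square of (\ref{tsaction}) together with the rectangle formed by stacking the naturality square of $\eta^R$ for $v$ on the $\Psi_R$-image of the left-hand square and comparing it with the middle square — which is exactly the auxiliary diagram displayed in the paper's proof. Your extra justification of the reverse implications (via simplicity of the objects, or the vanishing of $\Ext^{-1}(\Psi_R(A),\Psi_R^2(A))$ controlling both the factorization and the ambiguity in the choice of $r$) is correct in the setting where the lemma is applied and makes explicit a point the paper passes over silently.
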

\section{Twisted projective bundles and flips}\label{twisted}
The presence of non-fine moduli spaces in the sequence of Mukai flops in (\ref{flops}) implies that the sequence of standard flips  (\ref{flip2})  may involve twisted projective bundles.  In this section, we prove a twisted version of a result due to Belmans, Fu and Raedschelders (see  \cite[Theorem~A]{belmans}) which describes how the derived category changes under a standard flip. We refer to C\u ald\u araru's book \cite{caldararu} and Bernardara's work \cite{BERNARDARA2016181}  for the standard  material on twisted sheaves and twisted derived categories. Since this section is just an adaptation of the proof of \cite[Theorem~A]{belmans} for the twisted case, we will follow its notation closely.

We begin with the precise definition of a twisted standard flip. Our spaces are assumed to be smooth compact complex  manifolds but not necessarily projective varieties.

Let $X$ be a smooth compact  complex  manifold containing a twisted projective bundle $$\pi:Z=\mathbb{P}(\mathcal{V})\to F,$$  where $\mathcal{V}$ is a twisted vector bundle on a smooth compact complex manifold $F$. Let $\alpha\in \Br F$ be the Brauer class associated to $\mathcal{V}$. We denote by $\mathcal{O}_{\pi}(-1)$  the tautological   $\pi^*\alpha$-twisted line bundle on $Z$. Assume that the normal bundle of $Z$ in $X$ satisfies  
$$ \mathcal{N}_{Z/X}|_{\mathbb{P}^k}=\mathcal{O}_{\mathbb{P}^k}(-1)^{\ell+1} $$
for every fiber $\mathbb{P}^k=\pi^{-1}(x)$ of $\pi:Z\to F$, or equivalently, there is an isomorphism $$\mathcal{N}_{Z/X}\simeq \mathcal{O}_{\pi}(-1)\otimes \pi^*\mathcal{V}',$$
where $\mathcal{V}'$ is some $\alpha^{-1}$-twisted vector bundle of rank $\ell+1$ on $F$ (see \cite[Proposition~1.2.10]{caldararu} for the notion of a product of two twisted sheaves).

Let $\tau:\overset{\sim}{X}\to X$ be the blowup of $X$ along $Z$ with exceptional divisor $j:E\hookrightarrow \overset{\sim}{X}$. We have isomorphism an $E\simeq \mathbb{P}(\mathcal{V})\times_Z\mathbb{P}(\mathcal{V}')$.  We will assume that there is a contraction morphism  $\tau':\overset{\sim}{X}\to X'$ to a smooth compact complex manifold $X'$, which restriction to  $E$ is identified with the projection to the second factor $E\to \mathbb{P}(\mathcal{V}')$, and $\tau':\tilde{X}\to X'$ is identified with the blowup of $X'$ along $Z'\coloneqq\mathbb{P}(\mathcal{V}')$.  Then the normal bundle $\mathcal{N}_{Z'/X'}$ satisfies 
$$\mathcal{N}_{Z'/X'}|_{\mathbb{P}^\ell}=\mathcal{O}_{\mathbb{P}^\ell}(-1)^{k+1}$$
for every fiber $\mathbb{P}^\ell$ of $\pi':Z'\to F$. In summary, we have the following diagram
\begin{equation}\label{twistedflip}
    \xymatrix{
  & &  E\ar[ddrr]^{p'} \ar[ddll]_{p} \ar@{^{(}->}[d]^j  &  &\\
    &   &  \overset{\sim}{X} \ar[dl]_{\tau} \ar[dr]^{\tau'}  &  & \\
Z\ar[drr]_{\pi} \ar@{^{(}->}[r]^{i}  &  X \ar@{-->}[rr]^{\phi} & &  X' & Z' \ar@{_{(}->}[l]_{i'}  \ar[dll]^{\pi'}\\
    &  &  F & &  }.
\end{equation}
We refer to (\ref{twistedflip}) as the twisted standard flip diagram of $X$ along $Z$. For each $m\in \mathbb{Z}$, the tensor product $\mathcal{O}_{\pi}(m)=\mathcal{O}_{\pi}(1)^{\otimes m}$ defines a $\pi^*(\alpha^{-m})$-twisted line bundle on $Z$ and so by C\u ald\u araru's work \cite[Definition~3.1.1]{caldararu}, we   have a well defined functor
$$ \Phi_m:D^b(F,\alpha^{m})\to D^b(X),\qquad \mathcal{E}\mapsto i_*(\pi^*(\mathcal{E})\otimes \mathcal{O}_\pi(m)).$$
The next result is a twisted version of \cite[Theorem~A]{belmans}.
\begin{proposition}\label{twistedemd}
\begin{enumerate}[i)] Assume that $k\geq\ell$, 
    \item For every integer $m\in \mathbb{Z}$, the functor $\Phi_m$
    is fully faithful. 
    \item We have the following semiorthogonal decomposition of $D^b(X):$
    \begin{equation}\label{iidec}
        D^b(X)=\langle   \Phi_{-k+\ell}(D^b(F,\alpha^{-k+\ell})),\ldots, \Phi_{-1}(D^b(F,\alpha^{-1})),\tau_*\tau'^*D^b(X')\rangle.
    \end{equation}
\end{enumerate}
\end{proposition}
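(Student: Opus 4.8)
The plan is to run the proof of \cite[Theorem~A]{belmans} essentially verbatim, the only genuinely new feature being the bookkeeping of Brauer classes. There are two geometric inputs. The first is Orlov's blow-up formula, applied to the two honest blow-ups $\tau\colon\widetilde{X}\to X$ and $\tau'\colon\widetilde{X}\to X'$ of the smooth manifolds $X$ and $X'$ along the smooth centers $Z$ and $Z'$; this already holds for smooth compact complex manifolds and expresses $D^b(\widetilde{X})$ in terms of the \emph{untwisted} categories $D^b(X),D^b(X'),D^b(Z),D^b(Z')$. The second is C\u{a}ld\u{a}raru's twisted projective bundle formula \cite{caldararu}, which refines $D^b(Z)=D^b(\mathbb{P}(\mathcal{V}))$ into $k+1$ copies of the twisted categories $D^b(F,\alpha^{j})$ and $D^b(Z')=D^b(\mathbb{P}(\mathcal{V}'))$ into $\ell+1$ copies of $D^b(F,\alpha^{-j})$. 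The twists therefore enter only through this second step, and the compatibility that makes every functor well defined is that $\mathcal{O}_\pi(1)$ is $\pi^*\alpha^{-1}$-twisted: for $\mathcal{E}\in D^b(F,\alpha^m)$ the product $\pi^*\mathcal{E}\otimes\mathcal{O}_\pi(m)$ is genuinely untwisted, so $\Phi_m$ does land in the untwisted $D^b(X)$.

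For part (i) I would prove fully faithfulness of $\Phi_m=i_*\bigl(\pi^*(-)\otimes\mathcal{O}_\pi(m)\bigr)$ by the self-intersection computation. Via the adjunction $(Li^*,i_*)$ the graded object $\RHom_X(\Phi_m\mathcal{E},\Phi_m\mathcal{F})$ is computed from $Li^*i_*$, which is governed by the Koszul complex $\wedge^\bullet\mathcal{N}_{Z/X}$. Substituting $\mathcal{N}_{Z/X}\simeq\mathcal{O}_\pi(-1)\otimes\pi^*\mathcal{V}'$, the twist by $\mathcal{O}_\pi(m)$ cancels, and the $p$-th graded piece pushes forward along $\pi\colon Z=\mathbb{P}(\mathcal{V})\to F$ to $\RHom_F\bigl(\mathcal{E},\mathcal{F}\otimes R\pi_*\mathcal{O}_\pi(-p)\otimes\wedge^p\mathcal{V}'\bigr)[-p]$ for $0\le p\le\ell+1$. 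The piece $p=0$ returns $\RHom_F(\mathcal{E},\mathcal{F})$, and the pieces $p\ge 1$ vanish because $R\pi_*\mathcal{O}_\pi(j)=0$ for $-k\le j\le -1$, the acyclic window that, under the hypothesis $k\ge\ell$, contains the relevant negative degrees. Since every bundle in sight is a product of twisted sheaves of trivial total twist, this is verbatim the untwisted computation of \cite{belmans}, valid for all $m$.

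For part (ii) I would compare the two Orlov decompositions of $D^b(\widetilde{X})$. The blow-up $\tau$ has center $Z$ of codimension $\ell+1$, giving $\ell$ copies of $D^b(Z)$ together with $\tau^*D^b(X)$; the blow-up $\tau'$ has center $Z'$ of codimension $k+1$, giving $k$ copies of $D^b(Z')$ together with $\tau'^*D^b(X')$. Refining each copy of $D^b(Z)$ and $D^b(Z')$ by the twisted projective bundle formula rewrites both decompositions of $D^b(\widetilde{X})$ purely in terms of the categories $D^b(F,\alpha^{j})$ and the images of $D^b(X)$ and $D^b(X')$; here the two families of $D^b(F,\cdot)$-components have cardinalities $\ell(k+1)$ and $k(\ell+1)$, differing by exactly $k-\ell\ge 0$. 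A sequence of mutations then matches the two collections, shows that $\tau_*\tau'^*D^b(X')$ is admissible (equivalently that $\tau_*\tau'^{*}$ is fully faithful on $D^b(X')$), and exhibits $\Phi_{-k+\ell}(D^b(F,\alpha^{-k+\ell})),\ldots,\Phi_{-1}(D^b(F,\alpha^{-1}))$ together with $\tau_*\tau'^*D^b(X')$ as the asserted decomposition of $D^b(X)$, the $\Phi_m$ sitting to the left as in the statement. Semiorthogonality is forced by the vanishing $R\pi_*\mathcal{O}_\pi(j)=0$ for $-k\le j\le -1$ and its analogue for $\pi'$, and the hypothesis $k\ge\ell$ is what makes exactly $k-\ell$ twisted components survive and places the $m$ in the admissible range.

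I expect the main obstacle to be tracking the Brauer classes through the mutations, since the two centers carry opposite twists---$\alpha$ on $Z=\mathbb{P}(\mathcal{V})$ and $\alpha^{-1}$ on $Z'=\mathbb{P}(\mathcal{V}')$---while the normal bundle $\mathcal{N}_{Z/X}\simeq\mathcal{O}_\pi(-1)\otimes\pi^*\mathcal{V}'$ mixes them. At each stage one must check that the intermediate $\pi^*\alpha^{j}$-twisted objects produced by the mutations are exactly those on which multiplication by the appropriate power of $\mathcal{O}_\pi(1)$ untwists, so that C\u{a}ld\u{a}raru's functors remain well defined and the final pieces land in the categories $D^b(F,\alpha^m)$ claimed. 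Once this bookkeeping is set up, every cohomological vanishing reduces verbatim to the untwisted computations of \cite{belmans}, because all the pushforwards in question are of genuinely untwisted sheaves.
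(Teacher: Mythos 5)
Your overall architecture is the same as the paper's: both treat the statement as Belmans--Fu--Raedschelders run verbatim with Brauer classes tracked, and for part (ii) both compare the two Orlov decompositions of $\widetilde{X}$, refine the copies of $D^b(Z)$ and $D^b(Z')$ by the (twisted) projective bundle formula of Bernardara/C\u{a}ld\u{a}raru into blocks $D^b(F,\alpha^j)$, and conclude by mutation plus the vanishing $R\pi_*\mathcal{O}_\pi(j)=0$ in the acyclic range; the paper packages this through the twisted categories $\mathcal{A}(a,b)$ of (\ref{Aab}) and reduces each vanishing to the untwisted computation by working locally in the analytic topology on $F$ or $Z$, which is exactly the bookkeeping you anticipate as the main obstacle. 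Where you genuinely diverge is part (i): the paper proves full faithfulness via Proposition \ref{pro3}, i.e.\ C\u{a}ld\u{a}raru's twisted Bondal--Orlov criterion checked on skyscraper sheaves, where the whole point is that $\pi^*\mathcal{O}_x\otimes\mathcal{L}=\mathcal{L}|_P$ is an honest untwisted line bundle on the fiber $P$, so the verification is literally the untwisted one fiber by fiber. You instead compute $\RHom_X(\Phi_m\mathcal{E},\Phi_m\mathcal{F})$ globally from the Koszul filtration of $Li^*i_*$ and push forward along $\pi$. That route is valid and arguably more self-contained (it gives the full $\RHom$ rather than just the criterion on point objects), but it needs two small repairs: $Li^*i_*$ is only filtered with associated graded $\bigotimes\wedge^\bullet\mathcal{N}^\vee$, so you should argue via the resulting spectral sequence (harmless here since all higher pieces vanish); and the Koszul pieces run up to $q=\ell+1=\operatorname{codim}Z$, so the required vanishing $R\pi_*\mathcal{O}_\pi(-q)=0$ for $1\le q\le\ell+1$ forces $k\ge\ell+1$, not merely $k\ge\ell$ as you state. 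This boundary case $k=\ell$ is exactly where the hypothesis $H^p(P,\wedge^q\mathcal{N}')=0$ of the paper's Proposition \ref{pro3} also fails (the top piece $q=k+1$ contributes $H^k(\mathbb{P}^k,\mathcal{O}(-k-1))\neq 0$), so it is a shared imprecision rather than a defect of your argument alone; in that case part (ii) has no $\Phi_m$ blocks and nothing is lost in the application.
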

Part $i)$ is a consequence of the following proposition, which is a twisted version of \cite[Proposition~3]{belmans}. Consider the following diagram 
$$\xymatrix{
 Y\ar[d]^\pi  \ar@{^{(}->}[r]^{i}  &   M \\
  W & },   $$
where $W,Y,M$ are smooth compact complex manifolds, $\pi:Y\to W$ is a smooth proper morphism and $i:Y\hookrightarrow M$ is a closed immersion. For a closed point $x\in W$, we denote by $P=\pi^{-1}(x)$ the fiber of $\pi$ at $x$ and $j:P\hookrightarrow X$  its closed immersion obtained by composing with $i$.  
\begin{proposition}\label{pro3}
    Assume that for every fiber $P$ of $\pi$ and all integers $p,q$ with $p+q>0$ 
    $$ H^p(P, \wedge^q\mathcal{N}')=0, $$
where $\mathcal{N}'\coloneqq \mathcal{N}|_{Y/M}|_P $. Let $\beta\in \Br W$.  Then for every $\pi^*\beta$-twisted line bundle $\mathcal{L}$ on $Y$,  the functor 
$$\Phi: D^b(W,\beta^{-1})\to D^b(M),\qquad \mathcal{E}\mapsto i_*(\pi^*(\mathcal{E})\otimes \mathcal{L})  $$
is fully faithful.
\end{proposition}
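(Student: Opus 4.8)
The plan is to prove full faithfulness of $\Phi$ by verifying the criterion that $\Phi$ induces isomorphisms on all $\Ext$-groups, i.e.\ for all $\mathcal{E}_1,\mathcal{E}_2 \in D^b(W,\beta^{-1})$ and all $i \in \mathbb{Z}$, the natural map
$$ \Hom_{D^b(W,\beta^{-1})}(\mathcal{E}_1,\mathcal{E}_2[i]) \to \Hom_{D^b(M)}(\Phi\mathcal{E}_1,\Phi\mathcal{E}_2[i]) $$
is an isomorphism. Since $i \colon Y \hookrightarrow M$ is a closed immersion of smooth compact manifolds, the right-hand side can be computed via adjunction between $i_*$ and its right adjoint $i^!$. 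The standard formula $i^! i_* \mathcal{F} \simeq \mathcal{F} \otimes \wedge^\bullet \mathcal{N}_{Y/M}^\vee[-\bullet]$ (or dually, $Li^* i_* \mathcal{F} \simeq \mathcal{F} \otimes \wedge^\bullet \mathcal{N}_{Y/M}^\vee$ viewed as a complex with the appropriate grading) reduces the computation to $\Ext$-groups on $Y$ twisted by exterior powers of the conormal bundle. This is exactly the mechanism in the untwisted proof of \cite[Proposition~3]{belmans}, and the first step is to confirm that all of these adjunctions and the self-intersection (HKR-type) formula are valid $\alpha$-twistedly; this is legitimate because the twist $\mathcal{L}$ is pulled back along $\pi$, so $\mathcal{L} \otimes \mathcal{L}^\vee$ is genuinely untwisted and the Brauer obstruction cancels in the relevant $\SheafHome$-computations.

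Concretely, I would compute
$$ \Hom_{D^b(M)}\big(i_*(\pi^*\mathcal{E}_1 \otimes \mathcal{L}),\, i_*(\pi^*\mathcal{E}_2 \otimes \mathcal{L})[i]\big) \simeq \Hom_{D^b(Y,\beta)}\big(\pi^*\mathcal{E}_1 \otimes \mathcal{L},\, (\pi^*\mathcal{E}_2 \otimes \mathcal{L}) \otimes Li^* i_* \mathcal{O}_Y [i]\big), $$
and then invoke $Li^* i_* \mathcal{O}_Y \simeq \bigoplus_q \wedge^q \mathcal{N}_{Y/M}^\vee[q]$ (after possibly reducing to the formality case, which holds here since the normal bundle hypotheses make the spectral sequence degenerate). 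Because $\mathcal{L} \otimes \mathcal{L}^\vee$ is untwisted, the twists on $\pi^*\mathcal{E}_1$ and $\pi^*\mathcal{E}_2$ pair up to leave a genuine (untwisted) hom-complex involving $\pi^* \mathcal{RHom}(\mathcal{E}_1,\mathcal{E}_2) \otimes \wedge^q \mathcal{N}_{Y/M}^\vee$. Pushing forward along $\pi$ and using the projection formula, the $q=0$ summand returns exactly the left-hand side $\Hom_{D^b(W,\beta^{-1})}(\mathcal{E}_1,\mathcal{E}_2[i])$, so it suffices to kill all the higher summands $q > 0$.

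The vanishing of the higher summands is where the hypothesis $H^p(P,\wedge^q\mathcal{N}')=0$ for $p+q>0$ enters, and this is the main obstacle. The strategy is to reduce the global vanishing over $W$ to the fiberwise vanishing by a base-change/cohomology-and-base-change argument: I would analyze $R\pi_*(\pi^*\mathcal{RHom}(\mathcal{E}_1,\mathcal{E}_2)\otimes \wedge^q \mathcal{N}_{Y/M}^\vee)$ using the projection formula to pull out $\mathcal{RHom}(\mathcal{E}_1,\mathcal{E}_2)$, reducing to understanding $R\pi_*(\wedge^q\mathcal{N}_{Y/M}^\vee)$. The fiberwise hypothesis controls $R^p\pi_*(\wedge^q \mathcal{N}_{Y/M}^\vee)$ via base change: for $q>0$ the fiberwise groups $H^p(P,\wedge^q\mathcal{N}')$ vanish in all degrees (taking $p=0$ as well, since $p+q>0$ whenever $q>0$), so $R\pi_*(\wedge^q\mathcal{N}_{Y/M}^\vee)=0$ for all $q>0$, and only the $q=0$ term survives. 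The subtlety to handle carefully is compatibility of base change with the Brauer twist $\beta$: one must check that twisting by $\pi^*\beta$ commutes with the pushforward vanishing, which follows because $\wedge^q \mathcal{N}_{Y/M}^\vee$ carries no twist (the conormal bundle is an honest bundle on $Y$) and $\beta$ is pulled back from $W$, so it factors out of the fiberwise cohomology. Once the higher summands vanish, only the $q=0$ term contributes and it matches the source $\Ext$-group, giving the isomorphism and hence full faithfulness.
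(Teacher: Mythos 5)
Your strategy is sound, but it is genuinely different from the paper's. The paper does not attempt to verify full faithfulness on arbitrary objects: it invokes C\u ald\u araru's twisted Bondal--Orlov criterion (stated immediately before the proof), so that everything reduces to computing $\Ext$-groups between the images of the point objects $\mathcal{O}_x$, $x\in W$. The entire content of the paper's short proof is the observation that $\Phi(\mathcal{O}_x)=i_*(\mathcal{L}|_P)$ with $\mathcal{L}|_P$ an honest untwisted line bundle on the fiber $P$ (the class $\pi^*\beta$ dies on a fiber), after which the computation is literally the untwisted one of \cite[Proposition~3]{belmans}. Your route --- adjunction for $i_*$, the self-intersection formula, then $R\pi_*$ and the projection formula to compare with $\Hom_W(\mathcal{E}_1,\mathcal{E}_2[i])$ --- establishes the statement for all objects at once without the pointwise criterion; it is more work but more self-contained, and it makes visible exactly where each hypothesis enters.

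Two points in your write-up need repair before the argument closes. First, there is a normal/conormal mismatch: the hypothesis is $H^p(P,\wedge^q\mathcal{N}')=0$ for the \emph{normal} bundle $\mathcal{N}'=\mathcal{N}_{Y/M}|_P$, while your final vanishing step is phrased for $R\pi_*(\wedge^q\mathcal{N}_{Y/M}^\vee)$ and you quote the hypothesis as though it controlled those fiberwise groups; as written, the vanishing you invoke is not the one you assumed. The fix is to keep the adjunction in the form $\Hom_M(i_*A,i_*B[m])\simeq\bigoplus_q\Hom_Y(A,B\otimes\wedge^q\mathcal{N}_{Y/M}[m-q])$, so that it is $R\pi_*(\wedge^q\mathcal{N}_{Y/M})$ that must vanish for $q>0$ --- which is exactly what cohomology and base change plus the stated hypothesis give. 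Second, the parenthetical claim that the normal-bundle hypotheses force formality of $Li^*i_*\mathcal{O}_Y$ is not correct (formality is governed by conditions of Arinkin--C\u ald\u araru type, not by fiberwise cohomology vanishing); but you do not need formality: the canonical filtration of $Li^*i_*A$ with graded pieces $A\otimes\wedge^q\mathcal{N}^\vee[q]$ yields a spectral sequence whose $q>0$ columns die after pushing forward, so it degenerates for free in this computation. Finally, to conclude full faithfulness you should record that the surviving $q=0$ term is computed by the map that $\Phi$ actually induces (it is the composite of the units of the two adjunctions), and that $R^0\pi_*\mathcal{O}_Y=\mathcal{O}_W$, i.e., the fibers are connected --- a hypothesis used implicitly in both proofs.
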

We use a twisted  Bondal-Orlov criterion  for fully faithful functors due to C\u ald\u araru.  
\begin{theorem}(\cite[Theorem~3.2.1]{caldararu}). Let $X,Y$ be smooth compact complex manifolds and $\alpha\in \Br(X) $. Let $\mathcal{E}\in D^b(X\times Y,p^*(\alpha^{-1}))$ where $p:X\times Y\to X$ and let  $\Phi_{\mathcal{E}}:D^b(X,\alpha)\to D^b(Y)$ be its associated Fourier-Mukai transform. Then $\Phi_\mathcal{E}$ is fully faithful if and only if for all closed points $x,x'\in X$, we  have:
\begin{enumerate}[(i)]
    \item $\Hom_Y(\Phi_\mathcal{E}(\mathcal{O}_x),\Phi_\mathcal{E}(\mathcal{O}_x))\simeq \mathbb{C}$.
    \item $\Hom_Y(\Phi_\mathcal{E}(\mathcal{O}_x),\Phi_\mathcal{E}(\mathcal{O}_x)[n])=0 $ for all $m\not\in[0,\Dim X]$.
    \item $\Hom_Y(\Phi_\mathcal{E}(\mathcal{O}_x),\Phi_\mathcal{E}(\mathcal{O}_{x'})[m])=0$ for all $m\in\mathbb{Z} $ and $x\neq x'$. 

\end{enumerate}
    
\end{theorem}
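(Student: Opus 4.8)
The plan is to exhibit $\Phi$ as a twisted Fourier--Mukai transform and then check the three numerical conditions of the C\u ald\u araru criterion stated above. First I would set $\gamma=(\pi,i)\colon Y\to W\times M$, which is a closed immersion because $i$ is, and take the kernel $\mathcal{K}=\gamma_*\mathcal{L}\in D^b(W\times M,\,p^*\beta)$, where $p\colon W\times M\to W$ is the first projection. A projection-formula computation, using $p\circ\gamma=\pi$ and $p_M\circ\gamma=i$, gives $\Phi_{\mathcal{K}}(\mathcal{E})=i_*(\pi^*\mathcal{E}\otimes\mathcal{L})=\Phi(\mathcal{E})$, so the criterion applies with $X=W$ and $\alpha=\beta^{-1}$ (whence the kernel twist $p^*(\alpha^{-1})=p^*\beta$, exactly as carried by $\gamma_*\mathcal{L}$). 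The decisive observation is that $\Phi(\mathcal{O}_x)$ is an honest, \emph{untwisted} sheaf on $M$: pulling back the $\beta^{-1}$-twisted skyscraper $\mathcal{O}_x$, whose twist is trivial over the point $x$, produces $\mathcal{O}_P$, and tensoring with the $\pi^*\beta$-twisted $\mathcal{L}$ cancels the twist, so $\Phi(\mathcal{O}_x)=j_*(\mathcal{L}|_P)$ with $\mathcal{L}|_P$ a genuine line bundle on the fiber $P$ (the class $\pi^*\beta$ restricts trivially to $P$). Thus every $\Ext$-computation below takes place in the ordinary derived category $D^b(M)$, and all the twisted input is absorbed into this single identification.

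Condition (iii) is immediate: for $x\neq x'$ the fibers $P=\pi^{-1}(x)$ and $P'=\pi^{-1}(x')$ are disjoint in $Y$, so $j_*(\mathcal{L}|_P)$ and $j'_*(\mathcal{L}|_{P'})$ have disjoint supports in $M$ and admit no morphisms in any degree. For conditions (i) and (ii) I would compute $\Ext^\bullet_M\!\big(j_*(\mathcal{L}|_P),\,j_*(\mathcal{L}|_P)\big)$ via the standard sheaf-$\Ext$ formula for a closed immersion of smooth manifolds, $\sext^q_M(j_*\mathcal{F},j_*\mathcal{G})\cong j_*\big(\shom_P(\mathcal{F},\mathcal{G})\otimes\wedge^q\mathcal{N}_{P/M}\big)$, which with $\mathcal{F}=\mathcal{G}=\mathcal{L}|_P$ reduces to $\sext^q=j_*(\wedge^q\mathcal{N}_{P/M})$. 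Feeding this into the local-to-global spectral sequence gives
\[
E_2^{p,q}=H^p(P,\wedge^q\mathcal{N}_{P/M})\ \Longrightarrow\ \Ext^{p+q}_M\!\big(j_*(\mathcal{L}|_P),\,j_*(\mathcal{L}|_P)\big),
\]
so everything is controlled by the cohomology of the exterior powers of $\mathcal{N}_{P/M}$.

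Here the geometry of the fibration enters. Factoring the immersion $P\hookrightarrow Y\hookrightarrow M$ yields the exact sequence $0\to\mathcal{N}_{P/Y}\to\mathcal{N}_{P/M}\to\mathcal{N}'\to 0$, and since $P$ is a fiber of the smooth morphism $\pi$, the bundle $\mathcal{N}_{P/Y}\cong\mathcal{O}_P^{\oplus w}$ is trivial of rank $w=\Dim W$. Taking exterior powers produces a filtration of $\wedge^q\mathcal{N}_{P/M}$ whose graded pieces are the direct sums $(\wedge^{q-a}\mathcal{N}')^{\oplus\binom{w}{a}}$ for $0\le a\le q$. By the hypothesis $H^p(P,\wedge^{q-a}\mathcal{N}')=0$ whenever $p+(q-a)>0$, the only surviving term is $p=0$, $a=q$, contributing $H^0(P,\mathcal{O}_P)^{\oplus\binom{w}{q}}$; since the fibers are connected (as they are in all our applications) this is $\mathbb{C}^{\binom{w}{q}}$. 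Hence $H^p(P,\wedge^q\mathcal{N}_{P/M})$ vanishes for $p>0$ and equals $\mathbb{C}^{\binom{w}{q}}$ for $p=0$, so the spectral sequence degenerates at $E_2$ and $\Ext^m_M(j_*(\mathcal{L}|_P),j_*(\mathcal{L}|_P))=\mathbb{C}^{\binom{w}{m}}$ for $0\le m\le w$ and $0$ otherwise. This is precisely condition (i) (the case $m=0$ gives $\mathbb{C}$) and condition (ii) (vanishing outside $[0,\Dim W]$), so C\u ald\u araru's criterion yields full faithfulness of $\Phi$.

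I expect the main obstacle to be the careful bookkeeping of Brauer twists in the first step: confirming that $\pi^*\mathcal{O}_x\otimes\mathcal{L}$ is genuinely untwisted and that $\mathcal{K}=\gamma_*\mathcal{L}$ carries exactly the class $p^*\beta$ demanded by the criterion, together with the reduction of $\wedge^q\mathcal{N}_{P/M}$ to $\wedge^\bullet\mathcal{N}'$ via the trivial summand $\mathcal{N}_{P/Y}$. Once these are in place the argument is a twist-free $\Ext$ computation identical in spirit to the untwisted case in \cite{belmans}. The one external input is the connectedness of the fibers, which is what guarantees $H^0(P,\mathcal{O}_P)=\mathbb{C}$ and hence condition (i).
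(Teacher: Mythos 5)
Your proposal does not prove the statement under review; it \emph{applies} it. The statement is C\u ald\u araru's criterion itself (the ``if and only if'' characterization of fully faithful twisted Fourier--Mukai functors), which the paper quotes from \cite[Theorem~3.2.1]{caldararu} without proof. Your very first sentence --- ``check the three numerical conditions of the C\u ald\u araru criterion stated above'' --- takes the theorem as a black box, so read as a proof of that theorem your argument is circular. What you have actually written is a proof of Proposition \ref{pro3} of the paper: full faithfulness of $\Phi:D^b(W,\beta^{-1})\to D^b(M)$, $\mathcal{E}\mapsto i_*(\pi^*\mathcal{E}\otimes \mathcal{L})$. For that proposition your argument is essentially correct and agrees with the paper's (and with \cite[Proposition~3]{belmans}): the kernel is $\gamma_*\mathcal{L}$ with $\gamma=(\pi,i)$, the crucial observation is that $\pi^*(\mathcal{O}_x)\otimes\mathcal{L}=\mathcal{L}|_P$ is an honest untwisted line bundle on the fiber $P$, and then the local-to-global spectral sequence together with the filtration of $\wedge^q\mathcal{N}_{P/M}$ coming from $0\to \mathcal{N}_{P/Y}\to \mathcal{N}_{P/M}\to \mathcal{N}'\to 0$, with $\mathcal{N}_{P/Y}\simeq \mathcal{O}_P^{\oplus \Dim W}$ trivial, yields conditions (i)--(iii). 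But none of this addresses the theorem you were asked to prove.

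A proof of the criterion requires a genuinely different, kernel-level argument, namely the twisted analogue of the Bondal--Orlov/Bridgeland proof: using twisted Grothendieck--Verdier duality one constructs left and right adjoint kernels for $\mathcal{E}$; conditions (i)--(iii) say exactly that the skyscraper sheaves $\{\mathcal{O}_x\}$ --- which make sense as objects of $D^b(X,\alpha)$ because any Brauer class trivializes on a point, and which form a spanning class there --- are sent by $\Phi_{\mathcal{E}}$ to an orthogonal collection with the correct self-Exts; one then proves that the unit $\Id\to \Phi_{\mathcal{E}}^{R}\circ \Phi_{\mathcal{E}}$ is an isomorphism by evaluating the convolution of $\mathcal{E}$ with its adjoint kernel on this spanning class and identifying that convolution with the (suitably twisted) structure sheaf of the diagonal. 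None of these steps appears in your proposal, and they cannot be extracted from it: verifying (i)--(iii) for one particular functor gives no information about why those three conditions are \emph{sufficient} for full faithfulness in general.
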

\begin{proof}(Proposition \ref{pro3})
Let $x\in W$ be a closed point write $P=\pi^{-1}(x)$. The proof of the vanishing $(i),(ii)$ and $(iii)$ follows just as in the untwisted case since $\pi^*(\mathcal{O}_x)\otimes \mathcal{L}= \mathcal{L}|_P$ defines an untwisted line bundle on $P$ (see the proof of \cite[Proposition~3]{belmans}). 
\end{proof}
In order to prove  part $ii)$ of Proposition \ref{twistedemd}, we replace the categories $\mathcal{A}(a,b)$ of \cite[(30)]{belmans} by 
    \begin{equation}\label{Aab}
        \mathcal{A}(a,b)=j_*((\pi\circ p)^*(D^b(F,\alpha^{-a+b}))\otimes \mathcal{O}(a,b)),
    \end{equation}
    where $$\mathcal{O}(a,b)\coloneqq p^*\mathcal{O}_p(a)\otimes p'^*\mathcal{O}_{p'}(b).$$  Note that $\mathcal{O}(a,b)$ is a $(\pi\circ p)^*(\alpha^{-a+b})$-twisted line bundle on $E$ and so $\mathcal{A}(a,b)\subset D^b(\tilde{X})$. By Bernardara's  result \cite[Theorem~5.1]{BERNARDARA2016181}, we obtain for every $m\in \mathbb{Z}$, semiorthogonal decompositions 
    \begin{equation}\label{bernd1}
        \resizebox{0.91\hsize}{!}{%
         $\mathcal{A}(a,\star)\coloneqq j_*( p^*\mathcal{O}_p(a)\otimes p'^*(D^b(Z',\pi'^*(\alpha^{a})) ))=\langle \mathcal{A}(a,m-\ell),\mathcal{A}(a,m-\ell+1),\ldots ,\mathcal{A}(a,m)\rangle,$%
         }
    \end{equation}
    and 
    \begin{equation}\label{bernd2}
        \resizebox{0.91\hsize}{!}{%
        $\mathcal{A}(\star,b)\coloneqq j_*( p'^*\mathcal{O}_{p'}(b)\otimes p^*(D^b(Z,\pi^*(\alpha^{-b})) ))=\langle \mathcal{A}(m-k,b),\mathcal{A}(m-k+1,b),\ldots ,\mathcal{A}(m,b)\rangle.$%
        }
    \end{equation}
    By applying Orlov's blowup formula \cite[Theorem~4.3]{orlov}
    to the blowups $\tau:\tilde{X}\to X$ and $\tau':\tilde{X}\to X'$, we obtain the following semiorthogonal decompositions of $D^b(\tilde{X}$
    \begin{equation}\label{dbtilde1}
        \begin{split}
            D^b(\tilde{X})&=\langle \mathcal{A}(\star,-\ell),\mathcal{A}(\star,-\ell+1),\ldots , \mathcal{A}(\star,-1),\tau^*D^b(X)\rangle,  \\ D^b(\tilde{X})&=\langle \mathcal{A}(-k,\star),\mathcal{A}(-k+1,\star),\ldots , \mathcal{A}(-1,\star),\tau'^*D^b(X')\rangle.
        \end{split}
    \end{equation}
    In the untwisted case, the semi-orthogonality of the blocks appearing in (\ref{iidec}) follows from  \cite[Proposition~8]{belmans} and \cite[Proposition~9]{belmans}. The proof of these results for the twisted case is done in the same way by using the standard relations between derived  functors of twisted, i.e.,  base change, projection formula and adjoint functors (see \cite[Section~2.3]{caldararu}), and introducing Brauer classes whenever needed. Indeed, we start with the following vanishing result (see \cite[Lemma~7]{belmans})
    $$ \RHom_{\tilde{X}}(\mathcal{A}(a_1,b_1),\mathcal{A}(a_2,b_2))=0,$$ which holds  for any  of the following cases:
    \begin{enumerate}
        \item $1\leq a_1-a_2\leq k-1$.
        \item $1\leq b_1-b_2\leq \ell -1$.
        \item $a_1-a_2=k$ and $0\leq b_1-b_2\leq \ell-1$.
        \item $b_1-b_2=\ell$ and $0\leq a_1-a_2\leq k-1$.
    \end{enumerate}
    The proof of this result reduces to show the vanishing  of \cite[(44)]{belmans}
    $$ \RHom_F(\mathcal{F}_1,\mathcal{F}_2\otimes\pi_*\mathcal{O}(a_2-a_1)\otimes \pi_*'\mathcal{O}_{\pi'}(b_2-b_1))=0,$$
    where $\mathcal{F}_i\in D^b(F,\alpha^{a_i-b_i})$,  which follows by working locally in $F$ (in the analytic topology). Similarly,  the vanishing of \cite[(45)]{belmans} 
    $$ \RHom_X(\Phi_m(\mathcal{E}),\Phi_{m'}(\mathcal{F}))=0,\qquad \mathcal{E}\in D^b(F,\alpha^{m}),\mathcal{F}\in D^b(F,\alpha^{m'}),$$
 where  $m,m'$ are integers satisfying $0<m-m'<k-\ell $, reduces to show the vanishing of \cite[(48)]{belmans} which is proving  by working locally in $Z$.  The vanishing 
$$ \RHom_X(\tau_*\tau'^*(\mathcal{F}),\Phi_m(\mathcal{G}))=0 $$  of \cite[Proposition~9]{belmans}, follows   similarly by using the twisted version of the categories $\mathcal{A}(a,b)$ in (\ref{Aab}) and the semiorthogonal    decompositions    (\ref{bernd1}), (\ref{bernd2})  (\ref{dbtilde1}).

The  generation of the blocks in (\ref{iidec}) also follows without change by working with categories (\ref{Aab}) and the semiorthogonal decomposition (\ref{bernd1}), (\ref{bernd2}) and (\ref{dbtilde1}).
\begin{remark}\label{uuntwisteembd}
    By mutating  some blocks  to the right in (\ref{iidec}), we obtain a semiorthogonal decomposition 
    \begin{multline}
    D^b(X)=\langle \Phi_{-m}(D^b(F,\alpha^{-m}),\ldots,\Phi_{-1}(D^b(F,\alpha^{-1})),\tau_*\tau'^*D^b(X),\Phi_0(D^b(F)), \\
    \ldots,\Phi_{k-\ell-m-1}(D^b(F,\alpha^{k-\ell-m-1}))\rangle
\end{multline}
    for any integer $m$ with $0\leq m\leq k-\ell$. Thus if $k-\ell>0$,  there is an embedding $D^b(F)\hookrightarrow D^b(X)$.    
\end{remark}
\section{Construction of the sequence of flops}\label{mukaiflop}
Let $X$ be a K3 surface of Picard number 1 and genus $g\geq 4$. Let $\Lambda$ be the ample generator of $\Pic X$ and define 
$$ M_0=\mathbb{P}(H^0(X,\Lambda)^*),\qquad M_1=\Bl_XM_0,$$
 where $X\hookrightarrow M_0$ is embedded by the linear system $|\Lambda|$. Note that $\Lambda$ is very ample since $g\geq 4$ (see, e.g., \cite[Pages~40-41]{k3surface}). In order to obtain the sequence of  flips (\ref{flips})  satisfying the three conditions, we start by recalling a construction of a sequence of Mukai flops involving moduli space of Bridgeland stable objects of  $D^b(X)$ due to Flapan, Macr\`i, O'Grady and Sacc\`a (see \cite[Section~3]{Flapan_2021}). Some of the  results  there are proven under the assumption  that $g$ is divisible by 4 but the same ideas can be applied in order to obtain  results valid for  any genus $g\geq 4$. Everything  said here is essentially  done in \cite[Section~3]{Flapan_2021}.

Consider the Mukai vector 
\begin{equation}\label{v}
     v=(0,h,1-g)\in H_{\Alg}^*(X,\mathbb{Z}),
\end{equation}
where $h=c_1(\Lambda)$ and let $\mathcal{M}=\mathcal{M}_\Lambda(v)$ be the moduli space of Gieseker stable sheaves with Mukai vector $v$. By Section \ref{mukaiiso}, we have the Mukai isomorphism of lattices $$\theta\coloneqq \theta_\mathcal{E}:v^\perp_{\Alg}\to \NS(\mathcal{M}),$$  which is induced by some (quasi)-universal family $\mathcal{E}\in D^b(X\times \mathcal{M})$. Generically, $\mathcal{M}$ parametrizes sheaves of the form 
$$ i_{C,*}(\zeta),$$
where $\zeta\in \Pic^0(C)$ is a line bundle of degree zero and $C$ is a smooth curve in the linear system $|\Lambda|$ with embedding $i_C:C\hookrightarrow X$. There is a Lagrangian fibration 
$$ \pi:\mathcal{M}\to  |\Lambda|= \mathbb{P}^g ,\qquad i_{C,*}(\zeta)\mapsto C,$$
which has the zero section $\mathbb{P}^g\hookrightarrow \mathcal{M} $ given by $C\mapsto i_{C,*}(\mathcal{O}_C)$. 

We define the divisors 
$$ f:=\pi^*\mathcal{O}_{\mathbb{P}^g(1)}=\theta(0,0,-1),\qquad \lambda\coloneqq   \begin{cases}
\theta(2,-h,g/2-1) & \text{ if  $g$ is even, }  \\
\theta(2,-h,(g-1)/2) & \text{ if  $g$ is odd. } 
\end{cases}$$
Now we describe the movable cone of $\mathcal{M}$ and its chamber decomposition. We start with some preliminary notations. Let $J$ be the set of pair of integers $(c,d)$ with $c\geq 0,d\geq -1$ satisfying 
\begin{enumerate}[a)]
    \item $\frac{(g-1)c^2-d}{2c+1}\in \mathbb{Z}$.
    \item $\mu(c,d)> 0$, where 
        \begin{equation}\label{mucd}
             \mu(c,d) =\begin{cases}
\frac{g-1-4d-(2c+1)^2}{2(2c+1)^2} & \text{ if  $g$ is even,}  \\
\frac{g-1-4d}{2(2c+1)^2} & \text{ if  $g$ is odd. } 
\end{cases}
        \end{equation}
\end{enumerate}
Let $I= \{\mu(c,d) \in (c,d)\in J\}$ be the set of possibles values of $\mu$. We call $I$ the set of slopes and  ordered it by decreasing value of $\mu$. Denote $\eta=\#I-2$. We  will identify $I$ with the  set $\{-1,0,\ldots,\eta\}$ and its usual order.  For each integer $-1\leq i\leq \eta$, $\mu_i\in I$ denotes its corresponding slope and $J^i=\{(c,d)\in J:\mu(c,d)=\mu_i\}$ is the set of pairs with slope $\mu_i$. For example,  $J^{-1}=\{(0,-1)\}$ and $J^0=\{(0,0)\}$.

For each $i\in I$, we define  the divisor $$\tilde{a}_i=\lambda+\mu_if\in \NS(\mathcal{M}).$$ For each pair $(c,d)\in J$, the Mukai vector  $v_{c,d}$ is given by the formula
\begin{equation}\label{vi}
    v_{c,d}=\left(2c+1,-ch, \frac{(g-1)c^2-d}{2c+1}\right)\in H_{\Alg}^*(X,\mathbb{Z}).
\end{equation}
\begin{lemma}(See \cite[Lemma~3.18]{Flapan_2021}])
 The movable  cone of $\mathcal{M}$ is 
 $$\Mov \mathcal{M}= \mathbb{R}_{\geq 0}f + \mathbb{R}_{\geq 0}\lambda. $$
Moreover, the chamber decomposition for $\Mov M$ is given by the rays generated by the divisors $\tilde{a}_i$ for $i\in I$.
\end{lemma}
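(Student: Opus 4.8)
The plan is to identify the movable cone and its wall-and-chamber decomposition by combining the Mukai isomorphism with the general theory of Bayer–Macr\`i \cite{mmp}, which says that $\Mov\mathcal{M}$ decomposes into chambers corresponding to the different birational models of $\mathcal{M}$, and that the walls are governed by the existence of spherical or isotropic classes in the algebraic Mukai lattice. First I would transport the problem to the lattice $v^\perp_{\Alg}$ via $\theta$: since $\theta$ is an isomorphism of lattices sending $v^\perp_{\Alg}$ onto $\NS(\mathcal{M})$, it suffices to describe the image of $\Mov\mathcal{M}$ and the walls inside $v^\perp_{\Alg}$. The positive cone (or big-and-nef-type cone) is two-dimensional here because $\rho(\mathcal{M})=2$, spanned by $f=\theta(0,0,-1)$ and $\lambda$; the ray $\mathbb{R}_{\geq 0}f$ is the boundary coming from the Lagrangian fibration $\pi$ (so $f$ is nef but not big, lying on the boundary of the movable cone), while $\lambda$ spans the opposite extremal ray, which I would need to show bounds $\Mov\mathcal{M}$.

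Next I would invoke the Bayer–Macr\`i description of walls \cite[Theorem 5.7]{mmp}: a wall in $\Mov\mathcal{M}$ corresponds to a rank-two primitive sublattice $\mathcal{H}\subset H^*_{\Alg}(X,\mathbb{Z})$ containing $v$ such that $\mathcal{H}$ contains a class $w$ with either $w^2=-2$ (spherical, giving a flopping/divisorial wall) or $w^2=0$ (isotropic). The combinatorial content is then to enumerate these classes. I would posit that the relevant classes are exactly the $v_{c,d}$ of (\ref{vi}), compute $\langle v, v_{c,d}\rangle$ and $v_{c,d}^2$ using the Mukai pairing $\langle(r,D,s),(r',D',s')\rangle = D\cdot D' - rs' - r's$ together with $h^2=2g-2$, and check that the integrality condition a) is precisely what makes $v_{c,d}$ an integral Mukai vector (the third coordinate $\frac{(g-1)c^2-d}{2c+1}$ must be an integer), while condition b), $\mu(c,d)>0$, is what places the corresponding wall strictly inside the movable cone rather than outside it. The divisor $\tilde{a}_i=\lambda+\mu_i f$ is then identified as the image under $\theta$ of the class orthogonal to the destabilizing sublattice, so that the walls are the rays $\mathbb{R}_{\geq 0}\tilde a_i$; the formula (\ref{mucd}) for $\mu(c,d)$ should drop out of solving $\langle \theta^{-1}(\lambda+\mu f), v_{c,d}\rangle=0$ for $\mu$.

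The main obstacle, I expect, is twofold. First, verifying that the list of classes $\{v_{c,d}\}$ is \emph{exhaustive}—that no other spherical or isotropic class produces a wall inside $\Mov\mathcal{M}$—which requires a careful bound argument showing that any wall-inducing class can be normalized (using the $\ST$-action or by subtracting multiples of $v$) into the form (\ref{vi}) with $c\geq 0$, $d\geq -1$. This is the discriminant/range analysis that pins down the ranges of $c$ and $d$, and is where the even/odd dichotomy in (\ref{mucd}) enters, reflecting whether $\lambda=\theta(2,-h,g/2-1)$ or $\theta(2,-h,(g-1)/2)$. Second, one must confirm that $\lambda$ indeed spans the extremal ray bounding $\Mov\mathcal{M}$: this amounts to checking that the class $\theta^{-1}(\lambda)$ (a $(2,-h,\cdot)$ vector) is the unique effective primitive class on that side giving the final birational model, equivalently that $\mu(c,d)$ achieves a finite supremum over $J$ realized by the last slope $\mu_\eta$. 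Since the statement explicitly cites \cite[Lemma~3.18]{Flapan_2021}, I would lean on that reference for the bulk of the lattice bookkeeping and limit my own argument to recording how the genus-parity adjustment in $\lambda$ and $\mu(c,d)$ carries over from the $4\mid g$ case treated there to arbitrary $g\geq 4$, exactly as the surrounding text indicates.
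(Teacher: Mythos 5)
Your proposal is correct and follows essentially the same route as the paper: the paper's proof is a one-line reduction to \cite[Lemma~3.18]{Flapan_2021} and \cite[Theorem~3.9]{Flapan_2021}, i.e., exactly the Bayer--Macr\`i wall classification you outline, with the lattice bookkeeping deferred to that reference just as you propose. The only substantive content the paper adds beyond the citation is the observation that for odd $g$ no ray arises from \cite[Theorem~3.9~(ii)]{Flapan_2021} (the divisorial-type wall), so that $\Mov \mathcal{M}$ coincides with the positive cone --- this is the precise form of the ``genus-parity adjustment'' you flag but leave unspecified.
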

\begin{proof}
    The proof is analogous to \cite[Lemma~3.18]{Flapan_2021} using \cite[Theorem~3.9]{Flapan_2021}. Note that when $g$ is odd, there is no ray coming from \cite[Theorem~3.9~(ii)]{Flapan_2021} . Thus the movable cone coincides with the positive cone.
\end{proof}
As in \cite[Section~3.2]{Flapan_2021}, there is a one parameter  family of Bridgeland stability conditions for $D^b(X)$: $$\sigma_{\alpha}\coloneqq\sigma_{\alpha,-1/2}=(Z_{\alpha,-1/2},\Coh^{-1/2}X),\qquad \alpha\in (\alpha_0,\infty), $$ where 
$$ \alpha_0= \begin{cases}
\frac{1}{2\sqrt{g-1}} & \text{ if  $g$ is even,}  \\
0 & \text{ if  $g$ is odd. } 
\end{cases} $$
Recall that for a real number $\beta\in \mathbb{R}$,  $\Coh^{\beta}X$ denotes the abelian sub-category of $D^b(X)$:
$$\Coh^{\beta}X\coloneqq\langle \mathcal{T}^{\beta}, \mathcal{F}^\beta[1] \rangle,$$
where 
\begin{equation}\label{definitioncohb}
    \begin{split}
     \mathcal{T}^{\beta}&=\{E\in\Coh X, \forall E \twoheadrightarrow Q:\text{ torsion free }:\mu(Q)>\beta \}, \\
      \mathcal{F}^{\beta}&=\{E\in\Coh X, E \text{ torsion free and } \forall 0\neq K\hookrightarrow E: \mu(K)\leq \beta \},
\end{split}
\end{equation}
and $\mu(E)\coloneqq(c_1(E),h)/h^2$. By the  work of Bayer and Macr\`i  in \cite{MR3194493}, there is piece-wise analytic map
$$\ell:\Stab^\dag(X)\to \NS(\mathcal{M}),\qquad \sigma\mapsto \ell_\sigma,$$
where $\Stab^\dag (X)$ is the connected component of the space of stability conditions $\Stab(X)$ containing $\sigma_{\alpha}$ (see \cite[Section~11]{stability} for a more precise description).
Using \cite[Lemma~9.2]{MR3194493}, we obtain  (up to multiplication by  a positive scalar) that
\begin{equation}\label{limits}
    \begin{split}
       &\ell_{\sigma_{\alpha}}\to  f  \quad \text{as}\quad  \alpha\to \infty, \\  &\ell_{\sigma_{\alpha}}\to  \lambda \quad \text{as}\quad  \alpha\to \alpha_0^+. 
    \end{split}
\end{equation}
 Thus, for each $i\in I$, there is an  $\alpha_{i}\in (\alpha_0,\infty)$ such that  $\ell_{\sigma_{\alpha_i}}= \tilde{a}_i$ (up to multiplication by a positive scalar). Let $1\gg \varepsilon >0$ be sufficiently small. We have the following result.  
\begin{lemma}(See \cite[Lemma~3.19]{Flapan_2021})
    For each $i\in I$, the ray generated by the divisor $\tilde{a}_i$ induces a birational map $\mathcal{M}_{\sigma_{\alpha_i+\varepsilon}}(v)  \dasharrow \mathcal{M}_{\sigma_{\alpha_i-\varepsilon}}(v)$, which factors as a series of disjoint Mukai flops, one for each pair $(c,d)\in I$ with slope $\mu(c,d)=\mu_i$. More precisely, each pair $(c,d)\in J^i$, induces a  Mukai flop, whose exceptional locus is identified with a (possibly twisted)  projective bundle $\mathcal{P}_{c,d}=\mathcal{P}(\mathcal{V}_{c,d})$, which parametrizes complexes $E\in \mathcal{M}_{\sigma_{\alpha_i+\varepsilon}}(v) $ such that their   $\sigma_{\alpha_i-\varepsilon}$-destabilizing sequence is given by a non-trivial  exact triangle 
    $$ K\to  E\to Q,$$
    for some elements $K\in \mathcal{M}(v_{c,d})$ and $Q\in \mathcal{M}(v-v_{c,d}).$
\end{lemma}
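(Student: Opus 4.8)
The plan is to run the Bayer--Macr\`i wall-crossing analysis \cite{mmp,MR3194493}, following \cite[Lemma~3.19]{Flapan_2021} and checking that the argument is insensitive to the parity of $g$. First I would fix the wall $W_i\subset \Stab^\dagger(X)$ crossed as $\alpha$ decreases through $\alpha_i$. By (\ref{limits}) and the construction of $\alpha_i$, the class $\ell_{\sigma_{\alpha_i}}=\tilde a_i$ lies in the interior of $\Mov\mathcal{M}=\mathbb{R}_{\geq 0}f+\mathbb{R}_{\geq 0}\lambda$ precisely because condition b), $\mu(c,d)>0$, places $\tilde a_i$ strictly between $\lambda$ and $f$. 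Hence by the projectivity and birationality results of \cite{MR3194493} the induced map $\mathcal{M}_{\sigma_{\alpha_i+\varepsilon}}(v)\dashrightarrow \mathcal{M}_{\sigma_{\alpha_i-\varepsilon}}(v)$ is an isomorphism in codimension one, i.e. a flopping rather than a divisorial contraction. The transformation is then governed entirely by the rank-two hyperbolic lattice $\mathcal{H}_{W_i}$ attached to $W_i$: the destabilizing classes are the $w\in\mathcal{H}_{W_i}$ with $\phi_{\sigma_{\alpha_i}}(w)=\phi_{\sigma_{\alpha_i}}(v)$ and with $w$, $v-w$ both effective classes of $\sigma_{\alpha_i}$-stable objects.

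The core computation is to identify these classes with the vectors $v_{c,d}$ of (\ref{vi}). A direct Mukai-pairing calculation gives
\begin{equation*}
 v_{c,d}^2=2d,\qquad \langle v,v_{c,d}\rangle=g-1,\qquad (v-v_{c,d})^2=2d,
\end{equation*}
so condition a) is exactly integrality of the third coordinate $\tfrac{(g-1)c^2-d}{2c+1}$ (making $v_{c,d}$ a genuine Mukai vector), the bound $d\geq -1$ guarantees $v_{c,d}^2,(v-v_{c,d})^2\geq -2$ so that $\mathcal{M}(v_{c,d})$ and $\mathcal{M}(v-v_{c,d})$ are non-empty and smooth of dimension $2d+2$, and since $v$ is primitive we never have $v=2v_{c,d}$. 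I would then enumerate the integral classes of $\mathcal{H}_{W_i}$ meeting these effectivity and square constraints and check they are precisely $\{v_{c,d}:\mu(c,d)=\mu_i\}=\{v_{c,d}:(c,d)\in J^i\}$, the phase-equality on the wall being encoded by matching the slope $\mu(c,d)$ of (\ref{mucd}) with the coordinate $\mu_i$ of $\tilde a_i$. This matching is the \emph{only} place the parity of $g$ enters: replacing $\alpha_0$ and the two cases of $\mu(c,d)$ by their odd-$g$ values leaves the rest of the argument formally identical to \cite{Flapan_2021}, which treated $g$ divisible by $4$.

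Next I would describe each individual flop. Fixing $(c,d)\in J^i$, an object $E$ of class $v$ that is $\sigma_{\alpha_i+\varepsilon}$-stable but $\sigma_{\alpha_i}$-strictly semistable with factor classes $v_{c,d}$ and $v-v_{c,d}$ sits in a non-split triangle $K\to E\to Q$, with $K\in\mathcal{M}(v_{c,d})$ and $Q\in\mathcal{M}(v-v_{c,d})$, the extension class lying in $\Ext^1(Q,K)=\Hom(Q,K[1])$; conversely each such extension is stable on exactly one side of $W_i$. As $K,Q$ are non-isomorphic stable objects of equal phase, $\Hom(Q,K)=\Hom(K,Q)=0$, whence by Serre duality $\Ext^2(Q,K)=0$ and Riemann--Roch gives $\dim\Ext^1(Q,K)=-\chi(Q,K)=g-1-2d$, while $\Ext^1(K,Q)\cong\Ext^1(Q,K)^\vee$. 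Therefore the destabilized locus $\mathcal{P}_{c,d}=\mathbb{P}(\mathcal{V}_{c,d})$ is the projectivized extension bundle over $\mathcal{M}(v_{c,d})\times\mathcal{M}(v-v_{c,d})$ with fibre $\mathbb{P}^{\,g-2-2d}$, and on the far side of the wall the roles of sub and quotient are swapped, producing $\mathbb{P}(\mathcal{V}_{c,d}^\vee)$ over the same base: this is exactly a Mukai flop. Because $\mathcal{M}(v_{c,d})$ and $\mathcal{M}(v-v_{c,d})$ need not be fine, $\mathcal{V}_{c,d}$ exists only as a twisted (quasi-universal) bundle, which is why $\mathcal{P}_{c,d}$ is a \emph{possibly twisted} projective bundle. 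Disjointness of the $\mathcal{P}_{c,d}$ for distinct $(c,d)\in J^i$ follows from uniqueness of the Jordan--H\"older factors of a $\sigma_{\alpha_i}$-semistable object: the two-step filtration of $E$ determines $v_{c,d}$, so $E$ lies in at most one $\mathcal{P}_{c,d}$.

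I expect the main obstacle to be verifying that the local analytic model of the contraction along each $\mathcal{P}_{c,d}$ is genuinely the standard Mukai flop — not a divisorial extraction or a more degenerate totally semistable transformation — and that distinct classes of the same slope contract disjoint loci simultaneously. Both are settled by the local structure theorem for flopping walls in \cite{mmp}: one shows the normal bundle of a fibre $\mathbb{P}^{g-2-2d}$ of $\mathcal{P}_{c,d}$ in $\mathcal{M}$ is $\Omega^1_{\mathbb{P}^{g-2-2d}}$ in the fibre directions, which pins down the transformation as the Mukai flop, and that the classes $v_{c,d}$ (with $v_{c,d}^2=2d$, spherical only when $d=-1$) force no further semistable behaviour beyond these flops. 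The remaining bookkeeping — primitivity of $v_{c,d}$, non-emptiness of the factor moduli, and the dimension count $\dim\mathcal{P}_{c,d}=g+2+2d=2g-(g-2-2d)$ confirming codimension $g-2-2d$ — is routine and identical to \cite{Flapan_2021}.
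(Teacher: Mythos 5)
Your proposal is correct and follows essentially the same route as the paper, which simply invokes the Bayer--Macr\`i wall-crossing analysis via \cite[Lemma~3.19, Example~3.11]{Flapan_2021} and notes that the parity of $g$ only affects the numerics of $\alpha_0$ and $\mu(c,d)$; your lattice computations ($v_{c,d}^2=2d$, $\langle v,v_{c,d}\rangle=g-1$, $\dim\Ext^1(Q,K)=g-1-2d$) and the codimension count match the paper's. The only cosmetic difference is that you derive disjointness of the $\mathcal{P}_{c,d}$ from uniqueness of Jordan--H\"older factors on the wall, whereas the paper uses uniqueness of the Harder--Narasimhan filtration with respect to $\sigma_{\alpha_i-\varepsilon}$ --- the same idea.
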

\begin{proof}
    The proof is the same as in \cite[Lemma~3.19]{Flapan_2021}  using \cite[Example~3.11]{Flapan_2021}. Note that for two distinct  pairs $(c,d),(c',d')\in I$ with slope $\mu_i$, the projective bundles $\mathcal{P}_{c,d}$ and $\mathcal{P}_{c',d'}$ are disjoint by the uniqueness of the Harder-Narasimham filtration (with respect $\sigma_{\alpha_i-\varepsilon}$). Thus, the Mukai flops are disjoint.  
\end{proof}
For each $i\in I$, we define $$\mathcal{M}_i\coloneqq\mathcal{M}_{\alpha_{i}+\varepsilon}(v),\qquad \mathcal{M}_{\eta+1}\coloneqq \mathcal{M}_{\sigma_{\eta}-\varepsilon}(v).$$
If $\alpha\gg 0$, by the large volume limit property \cite[Section~14]{stability}, we have that $\mathcal{M}=\mathcal{M}_{\sigma_\alpha}(v)=\mathcal{M}_{-1}$. Thus, by tanking  $\alpha\to \alpha_0^+$,  we obtain a sequence of flops between moduli spaces
\begin{equation}\label{flops2}
  \xymatrix{
 \mathcal{M}=\mathcal{M}_{-1}  \ar@{-->}[r] & 
\mathcal{M}_0  \ar@{-->}[r] &    \ldots   \ar@{-->}[r]  &  \mathcal{M}_\eta \ar@{-->}[r]  &  \mathcal{M}_{\eta+1},  }
\end{equation}
where each flop  $\mathcal{M}_i\dasharrow \mathcal{M}_{i+1}$ fits into a diagram 
 \begin{equation}\label{flop1}
    \resizebox{0.91\hsize}{!}{%
    $\xymatrixcolsep{0.6pc}\xymatrix{
   \underset{(c,d)\in J^i}{\coprod}\mathcal{P}_{c,d}\ar[ddrr] \ar@{^{(}->}[r]  &  \mathcal{M}_{i} \ar@{-->}[rr] \ar[dr]  & &  \mathcal{M}_{i} \ar[dl] &  \underset{(c,d)\in J^i}{\coprod}\mathcal{P}_{c,d}'\ar@{_{(}->}[l]  \ar[ddll]\\
   & &  \overline{\mathcal{M}}_{i}  & & 
   \\    &  &    \underset{(c,d)\in J^i}{\coprod}\mathcal{M}(v_{c,d}) \times \mathcal{M}(v-v_{c,d}) \ar@{_{(}->}[u]    & &  }$%
   }
\end{equation}
Here  $\mathcal{P}_{c,d}=\mathbb{P}(\mathcal{V}_{c,d}), \mathcal{P}_{c,d}'=\mathbb{P}(\mathcal{V}_{c,d}^*)$, where  $\mathcal{V}_{c,d},\mathcal{V}_{c,d}^*$  are (possibly twisted) vector bundles over the product   $\mathcal{M}(v_{c,d})\times \mathcal{M}(v-v_{c,d})$ of moduli spaces of $\sigma_\alpha$-stable objects\footnote{These moduli spaces doesn't depends on the value of $\alpha$ (see \cite[Example~3.11]{Flapan_2021}).}, whose  fibers over a point $(T,T')\in \mathcal{M}(v_{c,d})\times \mathcal{M}(v-v_{c,d}) $ are given by 
$$  \mathcal{V}_{c,d}|_{(T,T')}=\Ext^1(T',T)\qquad \mathcal{V}_{c,d}^*|_{(T,T')}=\Ext^1(T',T)^*=\Ext^1(T,T').$$ 
Locally around  $\mathcal{P}_{c,d}$, $\mathcal{M}_i\dasharrow \mathcal{M}_{i+1}$ is a Mukai flop replacing $\mathcal{P}_{c,d}$ by $\mathcal{P}_{c,d}'$. The contraction  morphisms
\begin{equation}\label{contmorphims}
    \xymatrix{
\mathcal{M}_{i} \ar[dr]   \ar@{-->}[rr] &  &\mathcal{M}_{i+1} \ar[dl] \\
& \overline{\mathcal{M}}_{i}  &  }
\end{equation}
are obtained by identifying $S$-equivalent classes of $\sigma_{\alpha_{i}}$-semistable.
\section{Construction of the sequence of flips}\label{constructionflips}
Consider the contravariant functor 
\begin{equation}\label{psi} \Psi:D^b(X)^{op}\overset{\sim}{\to} D^b(X),\qquad E\mapsto \SheafHom(E,\Lambda^*[1]).
\end{equation}
Then $\Psi$ induces an antisymplectic involution $\tau$ on each $\mathcal{M}_i$ (see the proof of \cite[Proposition~3.1]{Flapan_2021} and \cite[Lemma~3.24]{Flapan_2021}).
In $\mathcal{M}$, the fixed locus of $\tau$ has two connected components
$$\Fix(\tau,\mathcal{M})= \mathbb{P}^g\cup \Omega,$$
where $\mathbb{P}^g$ corresponds to the  image of the zero section of the Lagrangian fibration $\pi:\mathcal{M}\to \mathbb{P}^g$, and $\Omega$ is the closure of the locus parametrizing sheaves of the form $i_{C,*}(\zeta)$, where  $C\in |\Lambda|$ is a smooth curve and  $\zeta$ is a non-trivial square root of $\mathcal{O}_C$ (see  \cite[Proposition~4.1]{Flapan_2021}).
The exceptional locus of $\mathcal{M}\dasharrow \mathcal{M}_0$ coincides with $\mathbb{P}^g$ and so the fixed locus of $\tau$ in $\mathcal{M}_0$ is 
$$ \Fix(\tau, \mathcal{M}_0)=M_0\cup \Omega,$$
where $M_0\coloneqq (\mathbb{P}^g)^\vee=\mathbb{P}(H^0(X,\Lambda)^*)$ (see \cite[Example~3.21]{Flapan_2021}). When $g$ is divisible by 4, the authors showed in  \cite[Section~5]{Flapan_2021} that $\Fix(\tau,\mathcal{M}_i)$   has two connected components $M_i,\Omega_i$ and the Mukai flop $\mathcal{M}_i\dasharrow \mathcal{M}_{i+1}$ induces  birational maps
$$ M_i\dasharrow M_{i+1},\qquad \Omega_i\dasharrow \Omega_{i+1}.$$
In this section, we provide a slightly different proof of this  result that is valid for any genus $g\geq 4$. As we will see, the statement remains the same  unless $g\equiv 3 \mod 4$ and $i=\eta$, in which case the flop $\mathcal{M}_{\eta}\dasharrow \mathcal{M}_{\eta+1} $ does not induce a birational map $M_{\eta}\dasharrow M_{\eta+1}$. In fact,  its exceptional locus coincides with $M_{\eta}$. 

By Proposition \ref{mukaifloprest}, we know that  $M_i\dasharrow M_{i+1}$  and $\Omega_i\dasharrow \Omega_{i+1}$ factorizes as a series of disjoint standard  flips, divisorial contractions or extractions. In Proposition \ref{smallmod} we show that this birational map is small if $i\leq \eta$  ($i\leq \eta-1$) when  $g\not \equiv 3\mod 4$ (resp., $g\equiv 3\mod 4$). We also describe exactly the projective bundles which are flipped (see Proposition \ref{desflip}) and we obtain an explicit formula  for their ranks.

The contravariant functor $\Psi$ induces an isomorphism 
$$\mathcal{M}(v_{c,d})\overset{\sim}{\to}\mathcal{M}(v-v_{c,d}),\qquad T\mapsto \Psi(T),$$ and it induces an action  on the (possibly twisted) vector bundles $$\mathcal{V}_{c,d},\mathcal{V}^*_{c,d}\to \mathcal{M}(v_{c,d})\times \mathcal{M}(v-v_{c,d}).$$ More precisely, for a point $(T,\Psi(T'))\in \mathcal{M}(v_{c,d})\times \mathcal{M}(v-v_{c,d})$, the action of $\Psi$ sends the fiber  $\mathcal{V}_{c,d}^*|_{(T,\Psi(T'))}$ to the fiber  $ \mathcal{V}_{c,d}^*|_{(T',\Psi(T))}$ via  the linear map defined in (\ref{psiaction}). For the vector bundle $\mathcal{V}_{c,d}$, the action of $\Psi$  is the dual action via the Serre duality
$$ \Ext^1(\Psi(T'),T)\simeq \Ext^1(T,\Psi(T'))^*$$
 (see \cite[Lemma~3.24]{Flapan_2021}). Thus, when $T=T'$, by equation  (\ref{eigenrelations}), we  have 
\begin{equation}\label{eingenrelations1}
    \Ext^1(T,\Psi(T))^{\pm } =\Ann(\Ext^1(\Psi(T),T)^{\mp })
\end{equation}
For $T\in M(v_{c,d})$, we define 
$$ k_{c,d}^{\pm }\coloneqq \Dim \Ext^1(T,\Psi(T))^{\pm }=\Dim \Ext^1(\Psi(T),T)^{\pm }.$$
We will use the following vanishing lemma.
\begin{lemma}\label{vanishinglemma}
    For each $T\in \mathcal{M}(v_{c,d})$, then 
    $$ \Ext^j(T,\Psi(T))=0,\qquad j=0,2.$$
\end{lemma}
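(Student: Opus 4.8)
The plan is to reduce both vanishings to the vanishing of ordinary $\Hom$-groups and then to invoke stability. Since $X$ is a K3 surface, $\omega_X\simeq \mathcal{O}_X$, so Serre duality provides canonical identifications
$$ \Ext^0(T,\Psi(T))=\Hom(T,\Psi(T)),\qquad \Ext^2(T,\Psi(T))\simeq \Hom(\Psi(T),T)^*. $$
Thus the lemma is equivalent to the two statements $\Hom(T,\Psi(T))=0$ and $\Hom(\Psi(T),T)=0$, and it suffices to prove these.

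Next I would record the two facts that make the $\Hom$-vanishing transparent. First, the classes of the two objects are distinct: $T\in \mathcal{M}(v_{c,d})$ has Mukai vector $v_{c,d}$ of rank $2c+1>0$, while $\Psi(T)\in \mathcal{M}(v-v_{c,d})$ has Mukai vector $v-v_{c,d}$ of rank $-(2c+1)<0$; in particular $v_{c,d}\neq v-v_{c,d}$ and hence $T\not\simeq \Psi(T)$. Second, at the wall stability condition $\overline{\sigma}=\sigma_{\alpha_i}$ governing the flop, the classes $v_{c,d}$ and $v-v_{c,d}$ have the same phase as $v$ — this is precisely the condition defining the wall through the destabilizing triangle $K\to E\to Q$ with $K\in\mathcal{M}(v_{c,d})$ and $Q\in\mathcal{M}(v-v_{c,d})$. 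Moreover the objects parametrized by $\mathcal{M}(v_{c,d})$ and $\mathcal{M}(v-v_{c,d})$ are $\overline{\sigma}$-stable: since (by the footnote) these moduli spaces are independent of $\alpha$, stability persists onto the wall. Consequently $T$ and $\Psi(T)$ are non-isomorphic $\overline{\sigma}$-stable objects of one and the same phase $\phi$.

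With these facts in hand the conclusion is immediate. A nonzero morphism between two $\overline{\sigma}$-stable objects of equal phase must be an isomorphism, as both are simple objects of the finite-length abelian category $\mathcal{P}(\phi)$ of $\overline{\sigma}$-semistable objects of phase $\phi$. Applying this in both directions and using $T\not\simeq \Psi(T)$ yields $\Hom(T,\Psi(T))=\Hom(\Psi(T),T)=0$, whence $\Ext^0(T,\Psi(T))=\Ext^2(T,\Psi(T))=0$ by the displayed identities above.

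The only genuinely delicate point I expect is the justification that $T$ and $\Psi(T)$ are simultaneously $\overline{\sigma}$-stable \emph{and} of equal phase on the wall, rather than merely stable of distinct phases just off it (off the wall one controls only one of the two $\Hom$'s). This rests on the wall-and-chamber description of $\Mov\mathcal{M}$ recalled above, together with the fact that $\overline{\sigma}$, although a wall for $v$, is generic for the smaller classes $v_{c,d}$ and $v-v_{c,d}$; I would cite the independence of $\mathcal{M}(v_{c,d})$ and $\mathcal{M}(v-v_{c,d})$ from $\alpha$ to close this gap.
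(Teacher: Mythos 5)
Your proof is correct, and it reaches the same two reductions as the paper --- via Serre duality on the K3 surface, both vanishings come down to $\Hom(T,\Psi(T))=0$ and $\Hom(\Psi(T),T)=0$ --- but the mechanism you use for the $\Hom$-vanishing differs from the paper's. The paper never works on the wall: it takes the destabilizing triangle $T\to E\to \Psi(T)$ with respect to $\sigma_{\alpha_i-\varepsilon}$ to get the strict phase inequality $\phi(T)>\phi(\Psi(T))$, hence $\Hom(T,\Psi(T))=0$, and then repeats the argument on the other side of the wall (with respect to $\sigma_{\alpha_i+\varepsilon}$, where the destabilizing triangle runs the other way) to get $\Hom(\Psi(T),T)=0$; this needs only semistability off the wall and the elementary fact that there are no nonzero maps decreasing the phase strictly. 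You instead sit exactly on the wall $\overline{\sigma}=\sigma_{\alpha_i}$, where $T$ and $\Psi(T)$ are stable of \emph{equal} phase, and conclude by Schur's lemma in $\mathcal{P}(\phi)$ together with $T\not\simeq\Psi(T)$ (which you correctly extract from the sign of the rank of $v_{c,d}$ versus $v-v_{c,d}$). Your route buys both vanishings from a single stability condition and makes the underlying reason transparent (two distinct simples of the same phase admit no maps in either direction), but it requires the extra input you flag yourself: that stability of $T$ and $\Psi(T)$ persists onto the wall. That input is indeed available --- it is exactly the content of the footnote that $\mathcal{M}(v_{c,d})$ and $\mathcal{M}(v-v_{c,d})$ are independent of $\alpha$, i.e.\ the wall for $v$ is not a wall for the smaller classes --- so the gap you identify is closed by the same citation the paper's own argument relies on, and both proofs stand on the same foundation.
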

\begin{proof}
 Let $i\in I$ be the integer such that $\mu_i=\mu(c,d)$. Then $T$ fits into a destabilizing sequence (with respect to $\sigma_{\alpha_i-\varepsilon}$):
$$ T\to E\to \Psi(T),$$
 where $E$ lies in the exceptional locus of the birational map  $$\mathcal{M}_i=\mathcal{M}_{\sigma_{\alpha_i+\varepsilon}}(v)\dasharrow \mathcal{M}_{\sigma_{\alpha_i-\varepsilon}}(v)=\mathcal{M}_{i+1}.$$ 
In particular, we see that the phases (with respect to $\sigma_{\alpha_i-\varepsilon}$) satisfy the inequality 
 $$\phi(T)>\phi(\Psi(T)),$$ and so $\Hom(T,\Psi(T))=0$. An analogous argument with the birational map 
$$
\mathcal{M}_{i+1}=\mathcal{M}_{\sigma_{\alpha_i-\varepsilon}}(v)\dasharrow \mathcal{M}_{\sigma_{\alpha_i+\varepsilon}}(v)=\mathcal{M}_i
$$
shows that $\Hom(\Psi(T),T)=0$, and by the Serre duality, we obtain  the vanishing $\Ext^2(T,\Psi(T))=0$.
\end{proof}
Using the previous lemma, we obtain that 
$$ \Dim \Ext^1(T,\Psi(T))= -\chi(T,\Psi(T))=\langle v,v-v_{c,d}\rangle ,  $$
where $\langle - ,-\rangle $ denotes the Mukai pairing. Using the formulas for $v$ and $v_{c,d}$ in (\ref{v}) and (\ref{vi}) respectively, we obtain that $$\Dim \Ext^1(T,\Psi(T))=g-1-2d.$$ Therefore  $k_{c,d}^\pm$ satisfy the relation 
\begin{equation}\label{dualbundle}
     k_{c,d}^++k_{c,d}^-=\Dim\Ext^1(T,\Psi(T))= g-1-2d.
\end{equation} 
\begin{proposition}\label{vbrank}
We have
     $$k_{c,d}^{+}=\frac{c+1}{(2c+1)}(g-1-4d-(2c+1)^2)+(2c+1)c+d,$$
     and $$k_{c,d}^{-}=\frac{c}{2c+1}(g-1-4d-(2c+1^2)) +(2c+1)(1+c) +d. $$
     \begin{proof}
 The second formula follows from the first one and equation (\ref{dualbundle}). Let $T\in \mathcal{M}(v_{c,d})$.  Using   Corollary (\ref{eigenspace}) with $R=\Lambda^*[1]$ and the Serre duality, we have $$\Ext^i(T,\Psi(T))^{+}\simeq  \Ext^{i-1}(S^2T,  \Lambda^*)\simeq H^{3-i}(X,S^2T\otimes \Lambda).$$ By Lemma \ref{vanishinglemma}, we have that  $ \RHom(T,\Psi 
 (T))= \Ext^1(T,\Psi(T))[-1]$. Thus, we obtain that  $\R (X,S^2T\otimes \Lambda)=H^0(X,S^2T\otimes \Lambda)$ and so   $$k_{c,d}^+=\chi(S^2T\otimes \Lambda).$$ Now if $F$ is a complex in $D^b(X)$ with Chern classes $c_1,c_2$ and rank  $r$, then the Chern classes of $S^2F$ are given by 
$$c_1(S^2F)=c_1(r+1), \qquad c_2(S^2F)=c_1^2(r^2/2+r/2-1)+c_2(r+2). $$
Moreover, since $X$ is a K3 surface, the Mukai vector $v(F)$ is given  by
$$ v(F)=\langle r,c_1, c_1^2/2-c_2+r\rangle. $$
Thus we may use the formula for $v_{c,d}$ in (\ref{vi}) to obtain that 
$$\Rank T=2c+1,\qquad c_1(T)=-ch,\qquad c_2(T)=\frac{2c^3(g-1)+(2c+1)^2+d}{2c+1}, $$
and so 
$$\resizebox{0.91\hsize}{!}{%
$c_1(S^2T)=-ch(2c+2),\qquad c_2(S^2T)=\frac{2c+3}{2c+1}(2c^3(2c+2)(g-1)+d)+(2c+1)(2c+3).$%
}
$$
Since the Chern classes of $\Lambda$ are $c_1=h$ and $c_2=0$, we have that
\begin{equation*}
    \begin{split}
        c_1(S^2T\otimes \Lambda)&=-ch, \\
    c_2(S^2T\otimes \Lambda)&=\frac{2c(4c^4+2c^3-2c+1)(g-1)+d(2c+3)}{2c+1}+(2c+3)(2c+1).
    \end{split}
\end{equation*}
We use the Hirzebruch–Riemann–Roch Theorem to obtain the desired formula for $k_{c,d}^+$.
\end{proof}
\end{proposition}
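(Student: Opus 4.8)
The plan is to collapse the computation of the eigenbundle rank to a single Euler characteristic on $X$ and then carry out Hirzebruch--Riemann--Roch. First I would note that only one of the two formulas requires genuine work: the additivity relation (\ref{dualbundle}) gives $k_{c,d}^{+}+k_{c,d}^{-}=g-1-2d$, so the expression for $k_{c,d}^{-}$ is forced by subtraction once $k_{c,d}^{+}$ is known. Thus the entire argument concentrates on $k_{c,d}^{+}$.

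For $k_{c,d}^{+}$ the decisive step is to reinterpret the $+$-eigenspace through Corollary \ref{eigenspace}. Taking $R=\Lambda^{*}[1]$, that corollary identifies $\Ext^{i}(T,\Psi(T))^{+}$ with $\Ext^{i-1}(S^{2}T,\Lambda^{*})$, and since $X$ is a K3 surface with $\omega_{X}\cong\mathcal{O}_{X}$, Serre duality rewrites this as $H^{3-i}(X,S^{2}T\otimes\Lambda)$. Next I would invoke the vanishing Lemma \ref{vanishinglemma}: because $\Ext^{0}(T,\Psi(T))=\Ext^{2}(T,\Psi(T))=0$, the complex $\RHom(T,\Psi(T))$ is concentrated in degree $1$, so only $i=1$ contributes and $\R(X,S^{2}T\otimes\Lambda)$ sits entirely in degree $0$. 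Consequently $k_{c,d}^{+}=h^{0}(X,S^{2}T\otimes\Lambda)=\chi(S^{2}T\otimes\Lambda)$, and the problem becomes purely numerical.

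The remaining steps are bookkeeping. From the Mukai vector formula (\ref{vi}) I would read off $\Rank T=2c+1$, $c_{1}(T)=-ch$, and recover $c_{2}(T)$ from the identity $v(T)=\langle r,c_{1},c_{1}^{2}/2-c_{2}+r\rangle$ valid on a K3 surface. Feeding these into the standard Chern-class formulas for a symmetric square yields $c_{1}(S^{2}T)$ and $c_{2}(S^{2}T)$; tensoring with $\Lambda$ (which has $c_{1}=h$, $c_{2}=0$) adjusts them, and finally Hirzebruch--Riemann--Roch on the K3 surface, where $\chi(\mathcal{F})=\tfrac{1}{2}c_{1}(\mathcal{F})^{2}-c_{2}(\mathcal{F})+2\Rank(\mathcal{F})$, produces the closed-form expression.

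I expect the sole obstacle to be arithmetic rather than conceptual. The combination of the symmetric-square Chern-class formulas with HRR generates fairly heavy polynomials in $c$, $d$, and $g$, and the fraction $\tfrac{c+1}{2c+1}$ appearing in the final answer must emerge from dividing through by $2c+1$, which enters because the third entry of $v_{c,d}$ is $\tfrac{(g-1)c^{2}-d}{2c+1}$. Tracking these denominators carefully, together with the integrality condition $(a)$ defining $J$ which guarantees $c_{2}(T)\in\mathbb{Z}$, is where an error is most likely to slip in; everything else is a direct substitution.
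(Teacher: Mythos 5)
Your proposal follows the paper's proof essentially verbatim: reduce to $k_{c,d}^{+}$ via the additivity relation, identify the $+$-eigenspace with $H^{0}(X,S^{2}T\otimes\Lambda)$ using Corollary \ref{eigenspace} with $R=\Lambda^{*}[1]$, Serre duality, and Lemma \ref{vanishinglemma}, and then compute $\chi(S^{2}T\otimes\Lambda)$ by Hirzebruch--Riemann--Roch from the Chern classes extracted from $v_{c,d}$. The approach and all key steps coincide with the paper's argument, so there is nothing further to compare.
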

Now we will study the restriction of the flop $\mathcal{M}_{i}\dasharrow \mathcal{M}_{i+1}$ to the connected components  of the fixed locus of $\tau$. Each flop $\mathcal{M}_{i}\dasharrow \mathcal{M}_{i+1}$ fits into the diagram (\ref{flop1}), and the contravariant functor $\Psi$, defined in (\ref{psi}), induces the antisymplectic involution $\tau$. Then, following the notation of Proposition \ref{mukaifloprest}, we have that  $$\mathcal{Z}=\mathcal{M}(v_{c,d})\times \mathcal{M}(v-v_{c,d}),\qquad \tau_\mathcal{Z}(T,\Psi(T'))=(T',\Psi(T)).$$ Note that $\Fix(\tau_\mathcal{Z},\mathcal{Z})$ is connected and it coincides with  the image of the  map
$$ \Delta: \mathcal{M}(v_{c,d})\to \mathcal{M}(v_{c,d})\times \mathcal{M}(v-v_{c,d}),\qquad T\mapsto (T,\Psi(T)). $$
Clearly, this map is an embedding,  and so we will use  the identification $\Delta(\mathcal{M}(v_{c,d}))=\mathcal{M}(v_{c,d})$. Thus, if 
$$ \mathcal{V}_{c,d}^{\pm }\subset \mathcal{V}_{c,d}|_{\mathcal{M}(v_{c,d})},\qquad  \mathcal{V}_{c,d}'^{\pm } \subset \mathcal{V}_{c,d}^*|_{\mathcal{M}(v_{c,d})},  $$
denote the  eigenbundles corresponding to the eigenspaces $$  \Ext^1(\Psi(A),A)^{\pm }\subset \Ext^1(\Psi(A),A),\qquad \Ext^1(A,\Psi(A))^{\pm }\subset \Ext^1(A,\Psi(A)),$$ 
 then $\mathbb{P}(\mathcal{V}_{c,d}^{\pm })$ and $\mathbb{P}(\mathcal{V}_{c,d}'^{\pm })$  are the connected components of $\Fix(\tau, \mathcal{P}_{c,d})$ and $\Fix(\tau, \mathcal{P}'_{c,d})$ respectively. 
 
 We have the following lemma.
 \begin{lemma}\label{lemmamukaiflop}
     Let $\mathcal{F}_i \subset \mathcal{M}_i $ be a  connected component of $\Fix(\tau,\mathcal{M}_i)$ which is not contained in $\mathcal{P}_{c,d}$ for any pair $(c,d)\in J^i$. Let $\mathcal{F}_{i+1}$ be its proper transform in $\mathcal{M}_{i+1}$. Then the flop $\mathcal{M}_{i}\dasharrow \mathcal{M}_{i+1}$ induces a birational map $\mathcal{F}_i\dasharrow \mathcal{F}_{i+1}$ which factors as a series of disjoint twisted  standard flips (not necessarily small, i.e., possibly divisoral extractions or contractions), one for each connected component of $\mathcal{P}_{c,d}\cap \mathcal{F}_i$. More precisely, each  connected component $\Gamma\subset \mathcal{P}_{c,d}\cap \mathcal{F}_i $  can be identified with a (possibly  twisted) projective bundle 
    $$  p:\Gamma\simeq\mathbb{P}(\mathcal{V}_\Gamma)\to \mathcal{M}(v_{c,d}), $$
    where $\mathcal{V}_\Gamma $ is one of the two eigenbundles $\mathcal{V}_{c,d}^\pm$. Moreover if $\mathcal{V}_\Gamma'=\Ann(\mathcal{V}_\Gamma)\subset\mathcal{V}^*|_{\mathcal{M}(v_{c,d})} $, then $\mathcal{N}_{\Gamma/\mathcal{F}_i}\simeq \mathcal{O}_p(-1)\otimes p^*\mathcal{V}_\Gamma' $, and the flip corresponding to $\Gamma$ is the twisted standard flip of $\mathcal{F}_i$  along  $\Gamma$ replacing $\Gamma\simeq\mathbb{P}(\mathcal{V}_\Gamma)$ by $\Gamma'\simeq\mathbb{P}(\mathcal{V}_\Gamma')$.
 \end{lemma}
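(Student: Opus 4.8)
The plan is to obtain this lemma as the twisted moduli-theoretic instance of Proposition \ref{mukaifloprest}, applied to the Mukai flop $\mathcal{M}_i\dasharrow\mathcal{M}_{i+1}$ whose local structure along each exceptional locus $\mathcal{P}_{c,d}=\mathbb{P}(\mathcal{V}_{c,d})$ is recorded in diagram (\ref{flop1}). Since $\mathcal{M}_i\dasharrow\mathcal{M}_{i+1}$ is a disjoint union of Mukai flops, one for each $(c,d)\in J^i$, it suffices to treat each $\mathcal{P}_{c,d}$ separately and then take the disjoint union. The only genuine departure from Proposition \ref{mukaifloprest} is that $\mathcal{M}(v_{c,d})$ need not be fine, so that $\mathcal{V}_{c,d}$ and its eigenbundles survive only as twisted bundles; the ``standard flip'' of Proposition \ref{mukaifloprest} must accordingly be replaced by the ``twisted standard flip'' of Section \ref{twisted}.

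First I would check that the hypotheses of Proposition \ref{mukaifloprest} are met by $\mathcal{M}=\mathcal{M}_i$, $\mathcal{P}=\mathcal{P}_{c,d}$ and $\mathcal{Z}=\mathcal{M}(v_{c,d})\times\mathcal{M}(v-v_{c,d})$. The involution $\tau$ is induced by $\Psi$, and because $\Psi$ carries a destabilizing triangle $K\to E\to Q$ with $K\in\mathcal{M}(v_{c,d})$, $Q\in\mathcal{M}(v-v_{c,d})$ to a triangle $\Psi(Q)\to\Psi(E)\to\Psi(K)$ of the same type, $\tau$ preserves $\mathcal{P}_{c,d}$ and induces on $\mathcal{Z}$ the involution $\tau_\mathcal{Z}(T,\Psi(T'))=(T',\Psi(T))$. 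Hence $\Fix(\tau_\mathcal{Z},\mathcal{Z})$ is the image of the diagonal embedding $\Delta$, which is connected and identified with $\mathcal{M}(v_{c,d})$; this is the single component $\mathcal{W}$ appearing in the conclusion, so the base of every eigen-bundle in the statement is $\mathcal{M}(v_{c,d})$.

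The crucial hypothesis is that the induced involution $\tau'$ on $\mathcal{M}_{i+1}$ restrict to the dual action on $\mathcal{P}'_{c,d}=\mathbb{P}(\mathcal{V}^*_{c,d})$, and this is precisely where Section \ref{section4} enters. Over the fixed locus the fibres $\mathcal{V}_{c,d}|_{(T,\Psi(T))}=\Ext^1(\Psi(T),T)$ and $\mathcal{V}^*_{c,d}|_{(T,\Psi(T))}=\Ext^1(T,\Psi(T))$ carry the involution induced by $\Psi$ through (\ref{psiaction}), and Serre duality exhibits them as dual spaces with dual actions. The relation (\ref{eingenrelations1}), itself a consequence of Corollary \ref{eigenspace} together with (\ref{eigenrelations}), then gives $\mathcal{V}_{c,d}'^{\pm}=\Ann(\mathcal{V}_{c,d}^{\mp})$, which is exactly the condition $\mathcal{V}_\Gamma'=\Ann(\mathcal{V}_\Gamma)$ demanded by Proposition \ref{mukaifloprest}. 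Granting these hypotheses, that proposition identifies each connected component of $\mathcal{P}_{c,d}\cap\mathcal{F}_i$ with one of the two eigen-projective-bundles $\mathbb{P}(\mathcal{V}_{c,d}^{\pm})\to\mathcal{M}(v_{c,d})$, computes $\mathcal{N}_{\Gamma/\mathcal{F}_i}\simeq\mathcal{O}_p(-1)\otimes p^*\mathcal{V}_\Gamma'$, and presents $\mathcal{F}_i\dasharrow\mathcal{F}_{i+1}$ as a disjoint union of flips replacing $\mathbb{P}(\mathcal{V}_\Gamma)$ by $\mathbb{P}(\mathcal{V}_\Gamma')$.

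The step I expect to be the main obstacle is reconciling this with the twisting, and here the key observation is that the proof of Proposition \ref{mukaifloprest} is local on $\mathcal{Z}$. Over a sufficiently small analytic open subset of $\mathcal{M}(v_{c,d})$ the moduli problem is fine, every bundle in sight is honest, and Proposition \ref{mukaifloprest} applies verbatim to produce the untwisted standard flip local model; what fails globally is only the existence of a universal family, measured by a Brauer class $\alpha\in\Br\mathcal{M}(v_{c,d})$, so that $\mathcal{V}_\Gamma$ and $\mathcal{V}_\Gamma'$ glue as $\alpha$- and $\alpha^{-1}$-twisted eigenbundles. I would then match this gluing data directly against the twisted standard flip diagram (\ref{twistedflip}) of Section \ref{twisted}, thereby concluding that $\mathcal{F}_i\dasharrow\mathcal{F}_{i+1}$ factors as the asserted disjoint series of twisted standard flips, which are divisorial rather than small exactly when one of the eigenbundles $\mathcal{V}_{c,d}^{\pm}$ degenerates.
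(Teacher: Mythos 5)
Your proposal is correct and follows essentially the same route as the paper: the paper's proof likewise reduces to Proposition \ref{mukaifloprest} by using descent to untwist over an analytic open neighborhood (the paper localizes on $\overline{\mathcal{M}}_i$ around $\mathcal{M}(v_{c,d})$ rather than on $\mathcal{M}(v_{c,d})$ itself, an immaterial difference), with the hypothesis-checking you spell out — the identification of $\Fix(\tau_\mathcal{Z},\mathcal{Z})$ with the diagonal and the annihilator relation (\ref{eingenrelations1}) — appearing in the paragraphs immediately preceding the lemma.
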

\begin{proof}
By descent theory, every twisted projective bundle becomes untwisted after pulling back along an adequate analytic open subset of the base. Thus, we can work locally in $\overline{\mathcal{M}}_{i}$ and pulling back the diagram (\ref{flop1}) along  an analytic open neighborhood $U\subset \overline{\mathcal{M}}_{i}$ of $\mathcal{M}(v_{c,d})$  to  obtain a Mukai flop $\mathcal{M}_i|_U\dasharrow \mathcal{M}_{i+1}|_{U}$ along the untwisted projective bundle $\mathcal{P}_{c,d}|_U$. Then  Proposition \ref{mukaifloprest} provides the description of the birational map $\mathcal{F}_{i}|_U\dasharrow \mathcal{F}_{i+1}|_U$, and the proposition follows. 
\end{proof}
Now we start with the study of the restriction of each  flop to the fixed locus by $\tau$.
 \begin{proposition}\label{smallmod}
     Let  $0\leq i \leq \eta$.
    \begin{enumerate}[i)]
           \item Assume  that $i\leq \eta-1$ if $g\equiv 3 \mod 4 $. Then $\Fix(\tau,\mathcal{M}_{i+1})$ has two connected components. Moreover,  the flop $\mathcal{M}_{i}\dasharrow \mathcal{M}_{i+1}$ induces  birational maps 
           \begin{equation}\label{birrational}
               M_i\dasharrow M_{i+1},\qquad \Omega_i\dasharrow \Omega_{i+1},
           \end{equation}
           where connected $ M_i,\Omega_i$ and $M_{i+1},\Omega_{i+1}$ are the connected components of   $\Fix(\tau,\mathcal{M}_{i}) $ and $\Fix(\tau,\mathcal{M}_{i+1})$ respectively.
        \item Assume  that $i\geq 1$, and that $i\leq \eta-2$ if $g\equiv 3 \mod 4$. Then the birational maps in (\ref{birrational})  are  small, i.e., are isomorphisms in codimension one. 
    \end{enumerate}
 \end{proposition}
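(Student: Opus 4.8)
The plan is to reduce both statements to the local description of the flop furnished by Lemma~\ref{lemmamukaiflop} and then to a purely numerical analysis of the eigenbundle ranks $k_{c,d}^{\pm}$ from Proposition~\ref{vbrank}. For part i) I would argue by induction on $i$, the base case $i=0$ being the known splitting $\Fix(\tau,\mathcal{M}_0)=M_0\sqcup\Omega$. Away from the flopping loci $\coprod_{(c,d)\in J^i}\mathcal{P}_{c,d}$ the map $\mathcal{M}_i\dasharrow\mathcal{M}_{i+1}$ is an isomorphism and so identifies the fixed loci there; inside, $\Fix(\tau,\mathcal{P}_{c,d})=\mathbb{P}(\mathcal{V}_{c,d}^+)\sqcup\mathbb{P}(\mathcal{V}_{c,d}^-)$, and by the symmetric/antisymmetric dichotomy of Lemma~\ref{lemmas4} the two pieces lie in the two distinct components of the fixed locus (the loci where the self-duality $t$ satisfies $t^{\Psi}=+t$ and $t^{\Psi}=-t$). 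Thus, as long as neither $M_i$ nor $\Omega_i$ is entirely absorbed into a flopping locus, Lemma~\ref{lemmamukaiflop} makes the flop restrict to birational maps of the matching components, and since a standard flip of a connected manifold stays connected, $\Fix(\tau,\mathcal{M}_{i+1})$ again consists of exactly the two proper transforms $M_{i+1},\Omega_{i+1}$. Since $M_i$ is Lagrangian of dimension $g$ and $\dim\mathbb{P}(\mathcal{V}_{c,d}^{\pm})=1+2d+k_{c,d}^{\pm}$, the containment $M_i\subset\mathcal{P}_{c,d}$ occurs exactly when the opposite eigenbundle vanishes; hence the hypothesis I must verify is $k_{c,d}^{+}\geq 1$ and $k_{c,d}^{-}\geq 1$ for every $(c,d)\in J^i$ in the stated range.

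For part ii) the flip attached to a piece $\mathbb{P}(\mathcal{V}_{c,d}^{\pm})\subset M_i$ replaces it by $\mathbb{P}(\Ann\mathcal{V}_{c,d}^{\pm})$, which by (\ref{eingenrelations1}) carries the opposite eigenvalue. Consequently the two codimensions involved are $k_{c,d}^{\mp}$ (of $\mathbb{P}(\mathcal{V}_{c,d}^{\pm})$ inside $M_i$) and $k_{c,d}^{\pm}$ (of its transform inside $M_{i+1}$), so the flip is small precisely when both $k_{c,d}^{+}\geq 2$ and $k_{c,d}^{-}\geq 2$. Because this condition is symmetric in the two signs, it is insensitive to which eigenbundle belongs to $M$ and which to $\Omega$, and establishing it settles smallness for both maps in (\ref{birrational}) at once. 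Thus everything comes down to the inequalities $k_{c,d}^{\pm}\geq 2$, together with the isolation of the exceptions when $g\equiv 3\bmod 4$.

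The numerical heart rests on one observation: condition a) forces $(2c+1)\mid m$ with $m=g-1-4d$, since $4[(g-1)c^2-d]\equiv m\pmod{2c+1}$ and $\gcd(4,2c+1)=1$. Writing $m=(2c+1)m'$ with $m'\geq 1$, the formulas of Proposition~\ref{vbrank} collapse to $k_{c,d}^{+}=(c+1)m'-(2c+1)+d$ and $k_{c,d}^{-}=cm'+(2c+1)+d$, equivalently $4k_{c,d}^{+}=(2c+3)m'+g-8c-5$. The bound $k_{c,d}^{-}\geq 2$ for $i\geq 1$ is immediate (trivial for $c\geq 1$, and for $c=0$ the constraint $i\geq 1$ excludes $(0,0),(0,-1)$ and so forces $d\geq 1$), which also explains why part ii) requires $i\geq 1$: the pair $(0,0)$ at $i=0$ has $k^-=1$ and gives the blow-up $M_0\rightsquigarrow M_1=\Bl_X\mathbb{P}^g$. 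The whole subtlety lies in $k_{c,d}^{+}$: integrality of $d=\tfrac14\bigl(g-1-(2c+1)m'\bigr)$ pins down the residue of $m'$, yielding $m'\equiv 0\bmod 4$ when $g\equiv 1$, $m'\equiv 2\bmod 4$ when $g\equiv 3$, and $m'$ odd with $m'\geq 2c+3$ when $g$ is even. Substituting these into $4k_{c,d}^{+}=(2c+3)m'+g-8c-5$ and invoking $d\geq-1$ (i.e.\ $(2c+1)m'\leq g+3$) should give $k_{c,d}^{+}\geq 2$ in all cases except, for $g\equiv 3\bmod 4$ and $m'=2$, the two pairs $(c,d)=(\tfrac{g-3}{4},0)$ with $k^{+}=1$ and $(c,d)=(\tfrac{g+1}{4},-1)$ with $k^{+}=0$; a slope comparison using $\mu(c,d)=m'/\bigl(2(2c+1)\bigr)$ then identifies these as the walls $i=\eta-1$ and $i=\eta$, matching the stated thresholds exactly.

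The step I expect to be the main obstacle is this last one: converting the single congruence governing $m'$ into the sharp dichotomy between $g\equiv 1$ and $g\equiv 3\bmod 4$, and proving that the two small-$k^{+}$ pairs are genuinely the final two walls rather than occurring earlier. This requires simultaneously exploiting the residue constraint on $m'$, the inequality $d\geq-1$, and a careful comparison of slopes to exclude any intermediate wall with $k^{+}\leq 1$. By contrast, the geometric reductions in the first two paragraphs are formal consequences of Lemma~\ref{lemmamukaiflop} and the relation (\ref{eingenrelations1}), and the remaining estimates on $k^{-}$ are routine.
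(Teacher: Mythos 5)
Your proposal is correct and follows essentially the same route as the paper: induction on $i$, reduction via the local flop description to the rank inequalities $k_{c,d}^{\pm}\geq 1$ for part i) and $k_{c,d}^{\pm}\geq 2$ for part ii) (using $\dim\mathbb{P}(\mathcal{V}_{c,d}^{\pm})=g-k_{c,d}^{\mp}$), and then a numerical case analysis isolating the exceptional pairs $(\tfrac{g-3}{4},0)$ and $(\tfrac{g+1}{4},-1)$ when $g\equiv 3\bmod 4$. The only difference is cosmetic: you parametrize by $m'=(g-1-4d)/(2c+1)$ with a single formula $4k_{c,d}^{+}=(2c+3)m'+g-8c-5$ covering both parities, where the paper uses the integer $t=2(2c+1)\mu(c,d)$ (resp.\ $(2c+1)\mu(c,d)$) and splits into even/odd cases, and you spell out the slope comparison placing the two exceptional pairs at the walls $\eta-1$ and $\eta$, which the paper asserts more briefly.
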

\begin{proof}
     By induction on $i\geq 0$,  we may assume that for all $j\leq i$,  $\Fix(\tau,\mathcal{M}_j)$ has two connected components $M_j,\Omega_j$  of dimension $g$  and there are  sequences of birational maps 
     $$  M_0\dasharrow M_1\dasharrow  \ldots \dasharrow M_i,\qquad \Omega_0\dasharrow \Omega_1\dasharrow  \ldots \dasharrow \Omega_i.$$
     Let $(c,d)$ be a pair with slope $\mu_i$. Using equation  (\ref{dualbundle}) we obtain that
    $$ \Dim \mathbb{P}(\mathcal{V}_{c,d}^\pm)=k_{c,d}^\pm-1+\Dim \mathcal{M}(v_{c,d})=k_{c,d}^\pm+1+2d=g-k_{c,d}^{\mp}.$$ Thus  parts $i)$ and $ii)$ will  follow once we show  that $k_{c,d}^\pm \geq 1$ and $k_{c,d}^\pm \geq 2$, respectively, under the given assumption. By using the slope $\mu(c,d)=\mu_i$ of equation (\ref{mucd}), the formulas of Proposition \ref{vbrank} become
    $$k_{c,d}^+=\begin{cases}
2(2c+1)(c+1)\mu(c,d)+(2c+1)c+d   & \text{ if $g$ is even, } \\
2(2c+1)(c+1)\mu(c,d)-(2c+1)+d   & \text{ if $g$ is odd,} 
\end{cases} $$
and 
$$k_{c,d}^-=\begin{cases}
2c(2c+1)\mu(c,d)+(2c+1)(c+1)+d   & \text{ if $g$ is even,} \\
2c(2c+1)\mu(c,d)+2c+1+d   & \text{ if $g$ is odd.} 
\end{cases} $$
Define 
$$ t=\begin{cases}
2(2c+1)\mu(c,d)   & \text{ if $g$ is even,} \\
(2c+1)\mu(c,d)   & \text{ if $g$ is odd.} 
\end{cases}$$
Note that the properties  $a)$ and $b)$ of the pair $(c,d)$ implies that $t$ is a positive integer. In particular $t\geq 1$ and so 
$$k_{c,d}^-\geq \begin{cases}
2c^2+3c+1+d   & \text{ if $g$ is even,} \\
4c+1+d   & \text{ if $g$ is odd.} 
\end{cases} $$
Since $d\geq -1$, we conclude that $k_{c,d}^-\geq 2$ if $c\geq 1$.  Now if $c=0$, then we have $$k_{c,d}^-=1+d,$$ and so $k_{c,d}^-\geq 1$ ($k_{c,d}^-\geq 2$) if $i\geq 0$ (resp., $i\geq 1$) since the pairs $(0,-1),(0,0)\in J$  have slope $\mu_{-1},\mu_{0}$ respectively. For $k_{c,d}^+$, we have
$$ k_{c,d}^+=\begin{cases}
(c+1)t+(2c+1)c+d   & \text{ if $g$ is even,} \\
2(c+1)t-(2c+1)+d   & \text{ if $g$ is odd.} 
\end{cases} $$
Thus if $t\geq 2$,  then  
$$k_{c,d}^+=\begin{cases}
2c^2+3c+2+d   & \text{ if $g$ is even,} \\
4c+3+d   & \text{ if $g$ is odd.} 
\end{cases} $$
where we obtain that $k_{c,d}^+\geq 2$ if $i\geq 0$ ($i>-1$ and $(0,-1)$ has slope $\mu_{-1}$). Therefore only remains to analyze the case when $t=1$. If $t=1$, then we have 
$$k_{c,d}^+=\begin{cases}
2c^2+2c+1+d  & \text{ if $g$ is even, } \\
1+d   & \text{ if $g$ is odd, } 
\end{cases}$$
which is at least 2 if $d\geq 1$. If $d=0$, then $t=1$ implies that 
$$c=\begin{cases}
\frac{g-2}{2}   & \text{ if $g$ is even, } \\
\frac{g-3}{4}  & \text{ if $g$ is odd. } 
\end{cases} $$
From this we see that $c\geq 1$ and so $k_{c,d}^+\geq 2$  if $g$ is even. We also obtain that if $g$ is odd, then $t=1$ implies that $g\equiv 3 \mod 4$ and the pair $(c,d)=(\frac{g-3}{4},0)$ has slope $\mu_{\eta-1}$, which gives $k_{c,d}^+=1$. Now, when $d=-1$, a similar argument shows that $k_{c,d}^+\geq 2$ when $g\not\equiv 3\mod 4$ and for the case $g\equiv 3\mod 4$, the pair $(c,d)=(\frac{g+1}{4},-1)\in J$ has slope  $\mu_\eta$, and it gives $k_{c,d}^+=0$.  
\end{proof}

\begin{proposition}\label{desflip}
    Let $i$ be as in Proposition \ref{smallmod} part $i)$. Then the birational map $M_i\dasharrow M_{i+1}$ ($\Omega_i\dasharrow \Omega_{i+1}$) factors as series of standard flips (not necessarily small), along  $\mathbb{P}(\mathcal{V}_{c,d}^-)$ (resp., $\mathbb{P}(\mathcal{V}_{c,d}^+)$) for each $(c,d)\in J^i$. 
\begin{proof}
    Consider an arbitrary element  $E\in M_0$.  Then $E$ fits into an exact triangle 
    $$\xymatrix{ \Psi(\mathcal{O}_X)\ar[r] & E\ar[r] & \mathcal{O}_X\ar[r]^-\alpha & \Psi(\mathcal{O}_X)[1]  }  $$
    for some $\alpha \in \Ext^1(\mathcal{O}_X,\Psi(\mathcal{O}_X))$. By Corollary \ref{eigenspace}, we have 
    \begin{equation*}
         \resizebox{1\hsize}{!}{%
         $\Ext^1(\mathcal{O}_X,\Psi(\mathcal{O}_X))^+=\Hom(S^2\mathcal{O}_X,\Lambda^*[2])=\Hom(\mathcal{O}_X,\Lambda^*[2])=\Ext^1(\mathcal{O}_X,\Psi(\mathcal{O}_X))$%
  }
    \end{equation*}
and so $\Psi$ acts trivially on this Ext group. On the other hand, $\Hom(E,\Psi(E))=\mathbb{C}$ generated by an isomorphism $t:E\to \Psi(E)$.  Thus, Lemma \ref{lemmas4} implies that $\Psi$ acts trivially on $\Hom(E,\Psi(E))$ for every  $E\in M_0$. By birationality, the same holds for all  $E\in M_{i+1}$. Let $(c,d)\in J$ be a pair with slope $\mu_i$. Since $\mathbb{P}(\mathcal{V}_{c,d}'^-)\subset \mathcal{M}_{i+1}$ parametrizes elements  which fit into extensions of the form
$$ \xymatrix{ \Psi(T)\ar[r] & E \ar[r] & T \ar[r]^-{\alpha}  &\Psi(T)[1] }$$
for some  $T\in \mathcal{M}(v_{c,d})$ and $\alpha\in \Ext^1(T,\Psi(T))^-$, we 
may apply  Lemma \ref{lemmas4} again to deduce that  $\mathbb{P}(\mathcal{V}_{c,d}'^-)\cap M_{i+1}=\emptyset$. Thus $\mathbb{P}(\mathcal{V}_{c,d}'^-)$ is contained in $\Omega_{i+1}$ since $M_{i+1},\Omega_{i+1}$ are the connected components of $\Fix(\tau,\mathcal{M}_{i+1})$. A similar application of Lemma \ref{lemmas4} implies that $\Psi$ acts as $-1$ on $\Hom(F,\Psi(F))$  for every element $F\in \Omega_{i+1}$, and so $\mathbb{P}(\mathcal{V}_{c,d}'^+)\cap \Omega_{i+1}= \emptyset$. Since  the restriction of each Mukai flop changes the  sign of the flipped eigenbundle (see  Lemma \ref{lemmamukaiflop} and equation (\ref{eingenrelations1})), we obtain that the projective bundle $\mathcal{P}(\mathcal{V}_{c,d}^-)$ ($\mathbb{P}(\mathcal{V}_{c,d}^+)$) is contained in $M_i$ (resp., $\Omega_i)$, and the statement follows from Lemma \ref{lemmamukaiflop}. 
\end{proof}
\end{proposition}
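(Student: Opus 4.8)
The plan is to reduce the statement to the determination of a single locally constant sign. By Lemma \ref{lemmamukaiflop} we already know that $M_i\dasharrow M_{i+1}$ (and $\Omega_i\dasharrow\Omega_{i+1}$) factors as a series of disjoint standard flips, one for each $(c,d)\in J^i$, each performed along one of the two eigenbundles $\mathbb{P}(\mathcal{V}_{c,d}^{+})$ or $\mathbb{P}(\mathcal{V}_{c,d}^{-})$; the only remaining content is to decide \emph{which} eigenbundle is swallowed by $M_i$ and which by $\Omega_i$. To make this decision I would exploit the $\Psi$-action on $\Hom(E,\Psi(E))$. For any $\tau$-fixed point $E$ one has $E\simeq\Psi(E)$, and stability forces $\Hom(E,\Psi(E))\simeq\Hom(E,E)=\mathbb{C}$, so $\Psi$ acts on this line by a sign $\varepsilon(E)=\pm1$. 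This sign is locally constant on $\Fix(\tau,\mathcal{M}_i)$, hence takes a single value on each connected component; the whole proof amounts to pinning down that value.

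First I would compute $\varepsilon$ on the component $M_i$. By birationality it is enough to evaluate it at a generic point of $M_0$, where every $E$ sits in an exact triangle $\Psi(\mathcal{O}_X)\to E\to\mathcal{O}_X\xrightarrow{\alpha}\Psi(\mathcal{O}_X)[1]$ with $\alpha\in\Ext^1(\mathcal{O}_X,\Psi(\mathcal{O}_X))$. Applying Corollary \ref{eigenspace} with $A=\mathcal{O}_X$ and $R=\Lambda^*[1]$, and using $S^2\mathcal{O}_X=\mathcal{O}_X$, identifies the $+$-eigenspace $\Ext^1(\mathcal{O}_X,\Psi(\mathcal{O}_X))^{+}$ with $\Ext^1(S^2\mathcal{O}_X,\Lambda^*[1])=\Hom(\mathcal{O}_X,\Lambda^*[2])$, i.e.\ with the \emph{whole} group $\Ext^1(\mathcal{O}_X,\Psi(\mathcal{O}_X))$. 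Thus $\Psi$ acts as $+1$ on the extension class $\alpha$, and Lemma \ref{lemmas4} transports this to the isomorphism $t:E\to\Psi(E)$, giving $\varepsilon\equiv+1$ on $M_0$ and hence, by birationality, $\varepsilon\equiv+1$ on $M_{i+1}$. Since the two components are distinguished by this sign, $\varepsilon\equiv-1$ on $\Omega_{i+1}$.

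Next I would run the argument backwards on the flipped bundles. The locus $\mathbb{P}(\mathcal{V}_{c,d}'^{-})\subset\mathcal{M}_{i+1}$ parametrizes extensions $\Psi(T)\to E\to T\xrightarrow{\alpha}\Psi(T)[1]$ with $\alpha\in\Ext^1(T,\Psi(T))^{-}$, so Lemma \ref{lemmas4} gives $\varepsilon(E)=-1$; therefore $\mathbb{P}(\mathcal{V}_{c,d}'^{-})\subset\Omega_{i+1}$ and, symmetrically, $\mathbb{P}(\mathcal{V}_{c,d}'^{+})\subset M_{i+1}$. Finally I would pull these identifications back across the flop, using the annihilator relation $\mathcal{V}_\Gamma'=\Ann(\mathcal{V}_\Gamma)$ of Lemma \ref{lemmamukaiflop} together with the duality $\Ext^1(T,\Psi(T))^{\pm}=\Ann(\Ext^1(\Psi(T),T)^{\mp})$ from (\ref{eingenrelations1}): the flip partner of $\mathbb{P}(\mathcal{V}_{c,d}'^{+})$ is $\mathbb{P}(\mathcal{V}_{c,d}^{-})$ and that of $\mathbb{P}(\mathcal{V}_{c,d}'^{-})$ is $\mathbb{P}(\mathcal{V}_{c,d}^{+})$. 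This places $\mathbb{P}(\mathcal{V}_{c,d}^{-})$ inside $M_i$ and $\mathbb{P}(\mathcal{V}_{c,d}^{+})$ inside $\Omega_i$, which is exactly the assertion, with Lemma \ref{lemmamukaiflop} supplying the standard-flip structure.

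The step I expect to require the most care is the bookkeeping of signs: both the Serre-duality relation (\ref{eingenrelations1}) and the annihilator in Lemma \ref{lemmamukaiflop} interchange $+$ and $-$, so one must verify that these two reversals compose correctly and do not silently cancel. A secondary subtlety is justifying that $\varepsilon$ is genuinely locally constant on $\Fix(\tau,\mathcal{M}_i)$ — that the eigenvalue of $\Psi$ on the line $\Hom(E,\Psi(E))$ varies algebraically as $E$ moves — so that its value on all of $M_{i+1}$ is legitimately determined by a single computation at a generic point of $M_0$.
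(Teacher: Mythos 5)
Your proposal is correct and follows essentially the same route as the paper's own proof: it reduces to determining the sign of the $\Psi$-action on the line $\Hom(E,\Psi(E))$, computes that sign on $M_0$ via Corollary \ref{eigenspace} applied to $\Ext^1(\mathcal{O}_X,\Psi(\mathcal{O}_X))$, transports it with Lemma \ref{lemmas4}, and then uses the sign reversal coming from Lemma \ref{lemmamukaiflop} and equation (\ref{eingenrelations1}) to place $\mathbb{P}(\mathcal{V}_{c,d}^-)$ in $M_i$. The only cosmetic difference is that you identify $\mathbb{P}(\mathcal{V}_{c,d}'^{+})\subset M_{i+1}$ directly by symmetry, while the paper phrases this as $\mathbb{P}(\mathcal{V}_{c,d}'^{+})\cap\Omega_{i+1}=\emptyset$; the content is the same.
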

By applying this Proposition to $i=0$, we obtain the following result which was known to Macrì \cite{Macri}.
\begin{corollary}\label{blowup}
    $M_1$ is the blow-up of $M_0$ along $X$ embedded by the linear system $|\Lambda|$, and $M_0\dasharrow M_1$ is just the inverse of the blow-down morphism $M_1\to M_0$.
    \begin{proof}
    Since $(c,d)=(0,0)$ is the only pair with slope $\mu_0$, from the previous Proposition, we obtain that  $M_0\dasharrow M_1$ is a single standard flip along the projective bundle $\mathbb{P}(\mathcal{V}_0^-)$ whose base is $\mathcal{M}(v_{0,0})=\mathcal{M}(1,0,0)=X$. Moreover, by Proposition (\ref{vbrank}) we have that
$$ k_{0,0}^{+}=g-2,\qquad k_{0,0}^{-}=1.$$
  Therefore, $\mathcal{V}_{0,0}^-$ and $\mathcal{V}_{0,0}'^+$ have rank $1$ and $g-2$ respectively and the statement follows. 
    \end{proof}
\end{corollary}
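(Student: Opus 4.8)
The plan is to specialize Proposition \ref{desflip} to $i=0$ and then recognize the resulting standard flip as a blow-down. Since $J^0=\{(0,0)\}$ consists of a single pair, Proposition \ref{desflip} shows that $M_0\dasharrow M_1$ is a \emph{single} standard flip, centered over the base $Z=\mathcal{M}(v_{0,0})=\mathcal{M}(1,0,0)\cong X$, replacing $\mathbb{P}(\mathcal{V}_{0,0}^-)\subset M_0$ by $\mathbb{P}(\mathcal{V}_{0,0}'^+)\subset M_1$. The only arithmetic input is Proposition \ref{vbrank}: evaluating its two formulas at $(c,d)=(0,0)$ gives $k_{0,0}^+=g-2$ and $k_{0,0}^-=1$, so $\mathcal{V}_{0,0}^-$ has rank $1$ and $\mathcal{V}_{0,0}'^+$ has rank $g-2$.

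The key observation is that a standard flip whose flipped center is a projective bundle of relative dimension $0$ is in fact a blow-down. Indeed, $\mathbb{P}(\mathcal{V}_{0,0}^-)$ is the projectivization of a line bundle, hence isomorphic to its base $X$; and $\mathbb{P}(\mathcal{V}_{0,0}'^+)$ is a $\mathbb{P}^{g-3}$-bundle over $X$, of dimension $g-1$, hence a divisor in the $g$-dimensional manifold $M_1$. In the standard flip diagram the common roof $\widetilde{M}$ is simultaneously the blow-up of $M_0$ along $\mathbb{P}(\mathcal{V}_{0,0}^-)\cong X$ and the blow-up of $M_1$ along the divisor $\mathbb{P}(\mathcal{V}_{0,0}'^+)$. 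Since blowing up a smooth manifold along a smooth (Cartier) divisor is an isomorphism, we get $\widetilde{M}\cong M_1$, and the contraction $\widetilde{M}\to M_0$ identifies $M_1$ with $\Bl_X M_0$. In particular $M_0\dasharrow M_1$ is nothing but the inverse of the blow-down $M_1\to M_0$; one checks for consistency that the exceptional divisor $\mathbb{P}(\mathcal{N}_{X/M_0})$ of this blow-up, a $\mathbb{P}^{g-3}$-bundle over $X$, matches $\mathbb{P}(\mathcal{V}_{0,0}'^+)$.

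It remains to identify the embedding $X\hookrightarrow M_0=\mathbb{P}(H^0(X,\Lambda)^*)$ of the blow-up center with the one given by the complete linear system $|\Lambda|$. I would do this by analyzing the universal extension that cuts out $\mathbb{P}(\mathcal{V}_{0,0}^-)$: over a point $I_x\in\mathcal{M}(1,0,0)\cong X$ the fiber is the one-dimensional eigenspace $\Ext^1(\Psi(I_x),I_x)^-$, and the associated object $E_x\in M_0$ is the unique nontrivial extension class, so that $x\mapsto E_x$ realizes the embedding. Restricting the tautological bundle $\mathcal{O}_{M_0}(1)$ along this map and computing through the Mukai morphism $\theta_{\mathcal{E}}$ of Section \ref{mukaiiso} shows that $\mathcal{O}_{M_0}(1)|_X\cong\Lambda$ and that the induced rational map to $\mathbb{P}(H^0(X,\Lambda)^*)$ is the complete embedding; this is the statement attributed to Macr\`i \cite{Macri}. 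I expect this last identification to be the main obstacle: the blow-down structure and the rank computation are essentially formal, but pinning down the polarizing line bundle as $\Lambda$ itself (rather than some multiple) and verifying completeness of the linear system requires tracking the Fourier--Mukai and quasi-universal-family data along the center, which is where the genuine geometric content lies.
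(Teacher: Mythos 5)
Your proposal matches the paper's proof: both specialize Proposition \ref{desflip} to $i=0$, use that $J^0=\{(0,0)\}$ is a singleton with base $\mathcal{M}(1,0,0)=X$, read off $k_{0,0}^+=g-2$ and $k_{0,0}^-=1$ from Proposition \ref{vbrank}, and conclude that a standard flip whose flipped side is a $\mathbb{P}^0$-bundle is the inverse of a blow-down. The paper's proof is terser (it ends with ``and the statement follows''), so your explicit justification that blowing up along the divisor $\mathbb{P}(\mathcal{V}_{0,0}'^+)$ is an isomorphism, and your remark on identifying the embedding with $|\Lambda|$ via the Mukai morphism, are just expansions of the same argument.
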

In summary, if we define 
$$\nu\coloneqq\begin{cases}
\eta+1=\#I-1 & \text{ if $g\not \equiv 3\mod 4$,} \\
\eta-1=\#I-3 & \text{ if $g\equiv 3\mod 4$,} 
\end{cases}$$
we obtain a sequence of birational maps
\begin{equation}\label{sequenceflips}
    \Bl_XM_0 =M_1\dasharrow M_2\dasharrow \ldots \dasharrow M_\nu
\end{equation}
where each  birational map  $M_i\dasharrow M_{i+1}$ is small and factors as a series standard flip along the projectives bundles $\mathbb{P}(\mathcal{V}_{c,d}^-)$:
\begin{equation}\label{flippp}
    \resizebox{0.91\hsize}{!}{%
    $\xymatrixcolsep{0.9pc}\xymatrix{
   \underset{(c,d)\in J^i}{\coprod}\mathbb{P}(\mathcal{V}_{c,d}^-)\ar[ddrr] \ar@{^{(}->}[r]  &  M_i \ar@{-->}[rr] \ar[dr]  & &  M_{i+1} \ar[dl] &  \underset{(c,d)\in J^i}{\coprod}\mathbb{P}(\mathcal{V}_{c,d}'^+)\ar@{_{(}->}[l]  \ar[ddll]\\
   & &  \overline{M}_i  & & 
   \\    &  &    \underset{(c,d)\in J^i}{\coprod}\mathcal{M}(v_{c,d}) \ar@{_{(}->}[u]    & &  }$%
   }
\end{equation}
When $g\equiv 3 \mod 4$, there is an additional birational map $ M_\nu\dasharrow M_{\nu+1}$. As we will see in Section \ref{section9}, $M_{\nu+1}=\mathbb{P}^g$ and $M_\nu$ can be identified with the blow-up of $M_{\nu+1}$ along  another K3 surface. This situation differs completely from the case when $g$ is divisible by 4. In that case, there  is a divisorial contraction  $M_\nu\to \overline{M}$ into a singular Fano variety which is obtained by considering a stability condition on a Brill-Noether type wall. We will study this contraction in  Section \ref{sectiondiv}, where we provide a description its fibers. 

\section{The ample cones}\label{section8}
 By Corollary \ref{blowup}, $M_1$ is the blowup of the projective space $M_0=\mathbb{P}(H^0(X,\Lambda)^*)$ along a smooth center of codimension at least 2 (since $g\geq 4$). Thus,  $\Pic M_1$ is generated by the class of the exceptional divisor $E$  and the pullback of some hyperplane $H\subset M_0$. Following the notation of \cite[Section~5]{Thaddeus1994}, we define the line bundles $\mathcal{O}(m,n)\in \Pic M_1$:
\begin{equation}\label{linebundles}
     \mathcal{O}(m,n)\coloneqq(m+n)H-nE.
\end{equation}
By Proposition \ref{smallmod}, we have a canonical isomorphism $\Pic M_1\simeq \Pic M_i$ for $1\leq i \leq \nu$, and we denote by $\mathcal{O}_i(m,n)\in \Pic M_i$ its image under this canonical isomorphism. By abuse of notation, we allow $m,n$ to be rational numbers and so $\mathcal{O}_i(m,n)\in \Pic M_i\otimes \mathbb{Q}$. The main result of this section  describes  the ample cone of $M_{i}$ in terms of line bundles (\ref{linebundles}) and the slope $\mu_j$. 
\begin{proposition}\label{amplecone}
     For each $1\leq i < \nu$, the ample cone of $M_{i}$ is given by  
     $$\resizebox{0.91\hsize}{!}{%
     $\Amp M_{i}=\begin{cases}
\mathbb{R}_{>0}\mathcal{O}_{i}(2,g/2-1-\mu_{i-1})+\mathbb{R}_{>0}\mathcal{O}_{i}(2,g/2-1-\mu_i) & \text{ if $g$ is even,}  \\
\mathbb{R}_{>0}\mathcal{O}_{i}(2,(g-1)/2-\mu_{i-1})+\mathbb{R}_{>0}\mathcal{O}_{i}(2,(g-1)/2-\mu_i) & \text{ if $g$ is odd.}  
\end{cases}$%
}
$$

If $i=\nu$, then we have 
$$\resizebox{0.91\hsize}{!}{%
$\Amp M_{\nu} \supset \begin{cases}
\mathbb{R}_{>0}\mathcal{O}_{\nu}(2,g/2-1-\mu_{\nu-1})+\mathbb{R}_{>0}\mathcal{O}_{\nu}(2,g/2-1) & \text{ if $g$ is even,}  \\
\mathbb{R}_{>0}\mathcal{O}_{\nu}(2,(g-1)/2-\mu_{\nu-1})+\mathbb{R}_{>0}\mathcal{O}_{\nu}(2,(g-1)/2) & \text{ if $g$ is odd.}  
\end{cases}$%
}
$$
\end{proposition}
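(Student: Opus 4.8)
The plan is to transport the wall-and-chamber structure of the movable cone of the ambient hyperkähler manifold $\mathcal{M}_i$ to the Lagrangian component $M_i$ by restriction, and to pin down the two extremal rays of the rank-two cone $\Amp M_i$ using the explicit restrictions of the two generators $f$ and $\lambda$. First I would reduce everything to a single computation on $M_1$. By Proposition \ref{smallmod} the maps $M_1 \dashrightarrow M_i$ are small for $1 \le i \le \nu$, so they induce the canonical isomorphisms $\Pic M_1 \simeq \Pic M_i$ used to define $\mathcal{O}_i(m,n)$; in parallel the flops $\mathcal{M}_1 \dashrightarrow \mathcal{M}_i$ are small, and by Remark \ref{mukaiflopdesc} the Mukai morphism $\theta$ together with the classes $f$ and $\lambda$ is compatible with the induced identification $\NS(\mathcal{M}_1) \simeq \NS(\mathcal{M}_i)$. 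Since restriction to the fixed locus commutes with these codimension-one identifications, $f|_{M_i}$ and $\lambda|_{M_i}$ coincide with $f|_{M_1}$ and $\lambda|_{M_1}$ under $\Pic M_i \simeq \Pic M_1$, and it suffices to compute the latter two classes on $M_1 = \Bl_X M_0$.

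Next I would evaluate these restrictions in the basis $\{H, E\}$ via the Mukai morphism formula $\theta_{\mathcal{E}}(w)\cdot C = \rho^{-1}\langle w, v(\Phi_{\mathcal{E}}\mathcal{O}_C)\rangle$ of Section \ref{mukaiiso}, tested against two curves: a general line $\ell \subset M_0 = \mathbb{P}(H^0(X,\Lambda)^*)$, for which $H\cdot\ell = 1$ and $E\cdot\ell = 0$, and a line $e$ contained in a fibre of the exceptional divisor $E \to X$, for which $H\cdot e = 0$ and $E\cdot e = -1$. For each test curve I would identify the Fourier--Mukai image of its structure sheaf from the modular description of $M_1$ in Corollary \ref{blowup}: over $\ell$ the parametrized objects are the extensions $\Psi(\mathcal{O}_X)\to F\to\mathcal{O}_X$, and over $e$ the extensions $\Psi(I_x)\to F\to I_x$ with $I_x\in\mathcal{M}(1,0,0)=X$; computing the Mukai vector of the image and pairing it with $(0,0,-1)$ and with the class defining $\lambda$ then produces $f|_{M_1}$ and $\lambda|_{M_1}$. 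I expect the outcome $f|_{M_1} = \mathcal{O}_1(0,-1)$ and $\lambda|_{M_1} = \mathcal{O}_1(2, g/2-1)$ (respectively $\mathcal{O}_1(2,(g-1)/2)$ when $g$ is odd); the negative $H$-coefficient of $f|_{M_1}$ is consistent with the flop $\mathcal{M}\dashrightarrow\mathcal{M}_0$ replacing the zero-section $\mathbb{P}^g$ by its dual $M_0$. Granting these, linearity in $\mu$ immediately gives $\tilde a_j|_{M_i} = (\lambda+\mu_j f)|_{M_i} = \mathcal{O}_i(2,\, g/2-1-\mu_j)$ in the even case, and the analogous expression when $g$ is odd.

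With the restrictions in hand I would conclude from the chamber structure together with the fact that ample (resp.\ nef) classes restrict to ample (resp.\ nef) classes on $M_i\subset\mathcal{M}_i$. The ample cone of $\mathcal{M}_i$ is the chamber spanned by the two bounding walls $\tilde a_{i-1}$ and $\tilde a_i$, so the open cone spanned by $\tilde a_{i-1}|_{M_i}$ and $\tilde a_i|_{M_i}$ lands in $\Amp M_i$. For $1\le i<\nu$ I would upgrade this to an equality: both walls restrict to the pullbacks of ample classes under the \emph{nontrivial} contractions $M_i\to\overline M_{i-1}$ and $M_i\to\overline M_i$ of diagram (\ref{flippp}), these being nontrivial because the relevant $k_{c,d}^{\pm}$ are at least $2$ by Proposition \ref{smallmod}(ii) and Proposition \ref{vbrank} (for $i=1$ the lower wall is the blow-down $M_1\to M_0$, which is likewise nontrivial). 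Hence $\tilde a_{i-1}|_{M_i}$ and $\tilde a_i|_{M_i}$ are nef but not ample, and since $\Pic M_i$ has rank two they are exactly the two extremal rays of the nef cone, forcing $\Amp M_i$ to be the open cone between them. For $i=\nu$ only the lower wall $\tilde a_{\nu-1}|_{M_\nu}$ is a contraction wall; the upper ray is governed instead by $\lambda|_{M_\nu}=\mathcal{O}_\nu(2,g/2-1)$, which is nef as the restriction of the nef class $\lambda$ sitting in the closure of the ample chamber of $\mathcal{M}_\nu$, so one obtains only the containment (with strict inequality precisely when $M_\nu$ is Fano and $\lambda|_{M_\nu}$ becomes ample).

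The main obstacle I anticipate is the explicit computation in the second step: correctly identifying the Fourier--Mukai transforms $\Phi_{\mathcal{E}}\mathcal{O}_\ell$ and $\Phi_{\mathcal{E}}\mathcal{O}_e$ of the test-curve structure sheaves and evaluating the resulting Mukai pairings. Once the two restriction classes $f|_{M_1}$ and $\lambda|_{M_1}$ are secured, the remaining steps are formal consequences of the rank-two Picard group, the chamber decomposition of $\Mov\mathcal{M}$, and the behaviour of (nef) ampleness under restriction to $M_i$.
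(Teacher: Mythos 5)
Your proposal follows essentially the same route as the paper: the paper's Lemma~\ref{lemmafl} computes $\iota^*f=\mathcal{O}_1(0,-1)$ and $\iota^*\lambda=\mathcal{O}_1(2,g/2-1)$ (resp.\ $\mathcal{O}_1(2,(g-1)/2)$) by pairing the Mukai morphism against exactly your two test curves, and the proof of the proposition then combines the chamber description of $\Amp\mathcal{M}_i$ with non-ampleness of the restricted walls for $1\le i<\nu$, deduced from $k^+_{c,d},k^-_{c',d'}\ge 2$ together with Proposition~\ref{desflip}, just as you argue. The one point to watch is your justification of the $i=\nu$ containment via ``$\lambda$ lies in the closure of the ample chamber of $\mathcal{M}_\nu$'': this is valid for $g\not\equiv 3\mod 4$, where $\mathcal{M}_\nu=\mathcal{M}_{\eta+1}$, but for $g\equiv 3\mod 4$ one has $\nu=\eta-1$, the class $\lambda$ lies beyond the walls $\tilde a_{\eta-1}$ and $\tilde a_\eta$, and $\tilde a_{\nu}|_{M_\nu}$ already induces the divisorial contraction $M_\nu\to\mathbb{P}^g$, so the upper ray of that case requires a separate discussion (the paper's own proof is equally silent on this point).
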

We  use following lemma which describes the ample cones of each $\mathcal{M}_i$. Recall the divisor $\tilde{a}_i=\lambda+\mu_i f\in \NS(\mathcal{M}) $.
\begin{lemma}(\cite[Theorem~3.9, Lemma~3.18]{Flapan_2021}) 
    Under the canonical isomorphism $$\NS(\mathcal{M})\simeq \NS(\mathcal{M}_{i}(v)),$$  the ample cone of $\mathcal{M}_{i}(v)$ is bounded  by $\tilde{a}_{i-1}$ and  $\tilde{a}_{i}$.  
\end{lemma}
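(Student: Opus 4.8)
The plan is to deduce the statement from the Bayer--Macrì description of the nef cone via wall-crossing, combined with the wall-and-chamber structure of the segment $\{\sigma_\alpha:\alpha\in(\alpha_0,\infty)\}$ and the computation of $\Mov\mathcal{M}$ recorded above. The essential input is the positivity property of the Bayer--Macrì map $\ell$ (see \cite{MR3194493,mmp}): for a stability condition $\sigma$ lying in an open chamber of the wall-and-chamber decomposition of $\Stab^\dag(X)$ with respect to $v$, the class $\ell_\sigma$ is ample on the locally constant moduli space $\mathcal{M}_\sigma(v)$, whereas $\ell_\sigma$ is nef but non-ample exactly when $\sigma$ lies on a wall.

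First I would fix the wall structure along the path. The values $\alpha_i$, $i\in I$, are by construction precisely the points at which $\alpha\mapsto\sigma_\alpha$ meets a wall for $v$, and on each open subinterval $(\alpha_i,\alpha_{i-1})$ the condition $\sigma_\alpha$ stays in a single chamber. Hence the moduli space $\mathcal{M}_{\sigma_\alpha}(v)$ is constant on this subinterval and equal to $\mathcal{M}_i=\mathcal{M}_{\sigma_{\alpha_i+\varepsilon}}(v)$, so by the positivity property $\ell_{\sigma_\alpha}$ traces a continuous arc inside the open cone $\Amp\mathcal{M}_i$ as $\alpha$ runs over $(\alpha_i,\alpha_{i-1})$.

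Next I would identify the two boundary rays. Using $\ell_{\sigma_{\alpha_i}}=\tilde{a}_i$ and continuity of $\ell$, the arc limits to $\tilde{a}_{i-1}$ as $\alpha\to\alpha_{i-1}^-$ and to $\tilde{a}_i$ as $\alpha\to\alpha_i^+$; since nefness is a closed condition and the model $\mathcal{M}_i$ governs the whole closed subinterval, both $\tilde{a}_{i-1}$ and $\tilde{a}_i$ are nef on $\mathcal{M}_i$, and being wall values they are non-ample, hence lie on the boundary of the nef cone $\overline{\Amp\,\mathcal{M}_i}$. (At the extremal end one replaces a wall value by the limit $\ell_{\sigma_\alpha}\to f$ from (\ref{limits}).) Because $v^\perp_{\Alg}$ has rank $2$, the lattice $\NS(\mathcal{M}_i)\simeq\NS(\mathcal{M})$ has rank $2$, so $\overline{\Amp\,\mathcal{M}_i}$ is a full two-dimensional cone whose boundary consists of exactly two extremal rays. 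As $\tilde{a}_{i-1}$ and $\tilde{a}_i$ are distinct boundary points (their slopes satisfy $\mu_{i-1}\neq\mu_i$), they must occupy the two distinct extremal rays while the interior arc lies strictly between them; therefore $\Amp\mathcal{M}_i$ is exactly the open cone bounded by $\tilde{a}_{i-1}$ and $\tilde{a}_i$. The compatibility of the Mukai isomorphisms across the intervening flops (Remark \ref{mukaiflopdesc}) guarantees that this description is consistent under the canonical identification $\NS(\mathcal{M})\simeq\NS(\mathcal{M}_i)$.

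The rank-$2$ bookkeeping above is routine; the real content, which I would simply cite, is that the $\alpha_i$ are \emph{all} the walls met by the path and that the Bayer--Macrì stability chambers correspond bijectively to the chambers of $\Mov\mathcal{M}$ cut out by the rays $\tilde{a}_i$ — in particular that no further wall is crossed inside an open subinterval and that $\ell$ sends each stability chamber onto the corresponding nef chamber. This is exactly \cite[Theorem~3.9]{Flapan_2021} together with the $\Mov\mathcal{M}$ lemma above, so once those are invoked the proof reduces to tracking endpoints.
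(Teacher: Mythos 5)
The paper gives no argument for this lemma at all: it is imported verbatim from \cite[Theorem~3.9, Lemma~3.18]{Flapan_2021}, and your proposal ultimately defers to exactly the same citations, so the approach is essentially the same (your sketch of how Bayer--Macr\`i positivity of $\ell$ plus the rank-2 structure of $\NS(\mathcal{M})$ yields the chamber description is a correct unwinding of what those references prove). The only point to watch is your claim that $\ell_\sigma$ is non-ample \emph{exactly} on walls — in general there are fake and totally semistable walls where $\ell_\sigma$ stays ample — but since you invoke \cite[Theorem~3.9]{Flapan_2021} precisely to certify that the rays $\tilde{a}_i$ are the genuine walls, this does not create a gap.
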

Write $\iota:M_i\to \mathcal{M}_i$. In  order to prove  Proposition \ref{amplecone}, we will first compute the restriction of $\lambda$ and $f$ to $M_1$.
\begin{lemma}\label{lemmafl}
In $M_1$, we have 
    $$ \iota^*f=\mathcal{O}_1(0,-1),\quad  \iota^*\lambda= \begin{cases}
\mathcal{O}_1(2,g/2-1) & \text{ if $g$ is even. } \\
\mathcal{O}_1(2,(g-1)/2) & \text{ if  $g$ is odd.}  
\end{cases}$$
\end{lemma}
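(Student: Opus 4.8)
The plan is to pin down $\iota^*f$ and $\iota^*\lambda$ by pairing them against a basis of curve classes on $M_1$ and evaluating the intersection numbers through the Mukai morphism $\theta$. Since $M_1=\Bl_XM_0$ with $M_0=\mathbb{P}^g$ and $X$ of codimension $g-2\geq 2$, we have $\Pic M_1=\mathbb{Z}H\oplus\mathbb{Z}E$, and I would span the dual space of curves by the proper transform $\ell$ of a general line in $M_0$ (which misses $X$ because $g\geq 4$) and a line $e$ inside a fibre $\mathbb{P}^{g-3}$ of the exceptional divisor $E\to X$. These satisfy $H\cdot\ell=1$, $E\cdot\ell=0$, $H\cdot e=0$, $E\cdot e=-1$, so that for $\mathcal{O}_1(m,n)=(m+n)H-nE$ one gets $\mathcal{O}_1(m,n)\cdot\ell=m+n$ and $\mathcal{O}_1(m,n)\cdot e=n$. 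Hence it suffices to compute the four numbers $\iota^*f\cdot\ell,\ \iota^*f\cdot e,\ \iota^*\lambda\cdot\ell,\ \iota^*\lambda\cdot e$, and read off $(m,n)$.

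For the computation I would use $\iota^*D\cdot C=D\cdot C$ for a curve $C\subset M_1\subset\mathcal{M}_1$, together with the defining property of $\theta$. Writing $f=\theta(0,0,-1)$ and $\lambda=\theta(w_\lambda)$ with $w_\lambda=(2,-h,g/2-1)$ (resp. $(2,-h,(g-1)/2)$), and transporting $\theta$ and a compatible quasi-universal family $\mathcal{E}$ to $\mathcal{M}_1$ via Remark \ref{mukaiflopdesc}, we obtain $\theta(w)\cdot C=\langle w,v(\Phi_{\mathcal{E}}\mathcal{O}_C)\rangle$, the similitude $\rho$ cancelling against the quasi-family. The core step is to identify $\Phi_{\mathcal{E}}\mathcal{O}_\ell$ and $\Phi_{\mathcal{E}}\mathcal{O}_e$. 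The curve $\ell$ is a pencil $W\subset\Ext^1(\mathcal{O}_X,\Psi(\mathcal{O}_X))$ of the extension classes describing $M_0$ (Proposition \ref{desflip}), over which the tautological extension reads $\Psi(\mathcal{O}_X)\boxtimes\mathcal{O}_{\mathbb{P}^1}\to\tilde{\mathcal{E}}_\ell\to\mathcal{O}_X\boxtimes\mathcal{O}_{\mathbb{P}^1}(-1)$; pushing forward to $X$ and using $H^*(\mathbb{P}^1,\mathcal{O}(-1))=0$ kills the quotient term, giving $\Phi_{\mathcal{E}}\mathcal{O}_\ell\simeq\Psi(\mathcal{O}_X)=\Lambda^*[1]$, so $v_\ell=v(\Lambda^*[1])=(-1,h,-g)$. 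Over a fibre of $E\to X$ the base object $T=I_x\in\mathcal{M}(1,0,0)=X$ is fixed and $e$ is a pencil in $\Ext^1(I_x,\Psi(I_x))^+$; the same tautological extension $\Psi(I_x)\boxtimes\mathcal{O}_{\mathbb{P}^1}\to\tilde{\mathcal{E}}_e\to I_x\boxtimes\mathcal{O}_{\mathbb{P}^1}(-1)$ yields $\Phi_{\mathcal{E}}\mathcal{O}_e\simeq\Psi(I_x)$, whose Mukai vector is $v_e=v-v_{0,0}=(-1,h,1-g)$.

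Finally I would evaluate the Mukai pairings. With $h^2=2g-2$, direct substitution gives $\langle(0,0,-1),v_\ell\rangle=-1$ and $\langle(0,0,-1),v_e\rangle=-1$, whence $m+n=-1$ and $n=-1$, i.e. $\iota^*f=\mathcal{O}_1(0,-1)$; and $\langle w_\lambda,v_\ell\rangle=g/2+1$, $\langle w_\lambda,v_e\rangle=g/2-1$ in the even case (resp. $(g+3)/2$ and $(g-1)/2$ in the odd case), forcing $\iota^*\lambda=\mathcal{O}_1(2,g/2-1)$ (resp. $\mathcal{O}_1(2,(g-1)/2)$). The step I expect to be the main obstacle is the bookkeeping in identifying $\Phi_{\mathcal{E}}\mathcal{O}_\ell$ and $\Phi_{\mathcal{E}}\mathcal{O}_e$: one must check that the quasi-universal family on $\mathcal{M}_1$ restricts on these test curves to the tautological extensions above (up to similitude and a global twist by a line bundle from the curve, which is harmless since $w\in v^\perp$), and confirm that $\ell$ and $e$ genuinely lie in the fixed component $M_1$—this holds for $e$ because $E=\mathbb{P}(\mathcal{V}_{0,0}'^+)\subset M_1$ by Corollary \ref{blowup} and Proposition \ref{desflip}, and for $\ell$ because it avoids the flip locus.
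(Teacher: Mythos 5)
Your proposal is correct and follows essentially the same route as the paper: both evaluate $\iota^*f$ and $\iota^*\lambda$ on a line in a fiber of $E\to X$ and on a (proper transform of a) general line in $M_0$, using the tautological extension over the pencil to identify $v(\Phi_{\mathcal{E}}\mathcal{O}_\gamma)$ and then pairing with $(0,0,-1)$ and $w_\lambda$ via the Mukai morphism. The only cosmetic difference is the normalization of the tautological family (the paper twists the sub by $\mathcal{O}_{\mathbb{P}}(1)$, obtaining $2v-v_{c,d}$, while you twist the quotient by $\mathcal{O}_{\mathbb{P}^1}(-1)$, obtaining $v-v_{c,d}$), which is immaterial since both differ by a multiple of $v$ and $w\in v^\perp$.
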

\begin{proof}
      For $i\in I$, let $\mathcal{E}_{i+1}$ be a quasi-universal family for the moduli space $\mathcal{M}_{i+1}$. Let $(c,d)\in J$ be a pair with slope $\mu_i$ and $\gamma$ be projective line lying in a fiber of the projective bundle $\mathcal{P}_{c,d}' \to  \mathcal{M}(v_{c,d})\times\mathcal{M}(v-v_{c,d})$.  For $w\in v^\perp_{\Alg}$  we have 
    $$\theta_{\mathcal{E}_{i+1}}(w)\cdot \gamma=-\langle w,v_{c,d}\rangle.  $$
    Indeed, for a point $(T,\Psi(T'))\in  \mathcal{M}(v_{c,d})\times \mathcal{M}(v-v_{c,d}) $ we have  $$\mathcal{P}_{c,d}'|_{(T,\Psi(T'))}=\mathbb{P}(\Ext^1(T,\Psi(T')))=:\mathbb{P}.$$ There is  a universal family $\mathcal{F}\in D^b(X\times \mathbb{P})$ which fits in a  exact triangle
    \begin{equation}\label{triangle}
        \Psi(T')\boxtimes \mathcal{O}_{\mathbb{P}}(1)\to \mathcal{F}\to T.
    \end{equation}
    Since the restriction of $\mathcal{E}_{i+1}$ to $X\times \mathbb{P}$  is equivalent to  $\mathcal{F}$, the  projection formula implies that the diagram 
    \begin{equation*}
    \xymatrix{ v^\perp \ar[rr]^-{\theta_{\mathcal{E}_{i+1}}} \ar[drr]^{\theta_\mathcal{F}}   &  & \NS(\mathcal{M}_{i+1}) \ar[d]^{\iota^*}  \\
     &    & \NS(\mathbb{P}) },
\end{equation*}
    commutes, and so
    $$ \theta_{\mathcal{E}_{i+1}}(w)\cdot \gamma =\theta_\mathcal{F}(w)\cdot \gamma=\langle w,v(\Phi_\mathcal{F} \mathcal{O}_C)\rangle.$$
    Using  the exact triangle (\ref{triangle}), we obtain 
    $$v(\Phi_\mathcal{F} \mathcal{O}_C)=2v(\Psi(T))+v(T)=2v-v_{c,d}, $$ 
    and the formula for $\theta_{\mathcal{E}_{i+1}}(w)\cdot \gamma$ follows since $w\in v^\perp$.

    In particular, for $i=1$, we can  take $\gamma$ to be a projective line in  the fiber of the exceptional divisor $E\to X$ and so:
    $$  \Deg \lambda|_{\gamma}=\begin{cases}
-\langle (2,-h,g/2-1), (1,0,0)\rangle =g/2-1 & \text{ if  $g$ is even, }  \\
-\langle (2,-h,(g-1)/2),(1,0,0)\rangle =(g-1)/2 & \text{ if  $g$ is odd,} 
\end{cases}$$ and $$ \Deg f|_{\gamma}= -\langle (0,0,-1),(1,0,0) \rangle=-1. $$
    Similarly for $i=0$, and $\gamma $  a projective line in $M_0$  we have
    $$ \Deg \lambda|_{\gamma}=\begin{cases}
-\langle (2,-h,g/2-1),(1,0,1) \rangle = g/2+1& \text{ if  $g$ is even, }  \\
-\langle (2,-h,(g-1)/2),(1,0,1)\rangle =(g+3)/2  & \text{ if  $g$ is odd, } 
\end{cases}$$ and $$ \Deg f|_{\gamma}= -\langle (0,0,-1),(1,0,1) \rangle=-1. $$
    Therefore in $M_1$, we have
    $$ \iota^*\lambda= \begin{cases}
(g/2+1)H-(g/2-1)E=\mathcal{O}_1(2,g/2-1) & \text{ if  $g$ is even,}  \\
(g+3)/2H-(g-1)/2E=\mathcal{O}_1(2,(g-1)/2)  & \text{ if  $g$ is odd,} 
\end{cases}$$ and $$\iota^*f=-H+E=\mathcal{O}_1(0,-1).$$
\end{proof}
\begin{proof}(Proposition \ref{amplecone})
    Using  Lemma \ref{lemmafl}, we see that  
    $$\iota^*\tilde{a}_i= \begin{cases}
\mathcal{O}_{i}(2,g/2-1-\mu_{j}) & \text{ if $g$ is even,} \\
\mathcal{O}_{i}(2,(g-1)/2-\mu_{j}) & \text{ if $g$ is odd.}  
\end{cases}$$
    Thus, the ample cone of $M_{i}$ contains the cone described in the statement. The fact that  $\iota^*\tilde{a}_i,\iota^*\tilde{a}_{i-1}$ are not ample for $M_i$ if $1\leq i\leq \nu-1$, follows from  the fact that  $k_{c,d}^+,k_{c',d'}^-\geq 2$ for pairs $(c,d),(c',d')$ with slopes $\mu_{i-1},\mu_{i}$ respectively. Then Proposition \ref{desflip} implies  that the divisors $\iota^*\tilde{a}_{i-1}$ and $\iota^*\tilde{a}_{i}$ induce non-trivial contraction maps.  
\end{proof}

\section{Proof of Theorem \ref{A} and \ref{B}}\label{proof}
Now we are ready prove the main results of this paper.
\setcounter{thmx}{0}
\begin{thmx}
    Let $X$ be a K3 surface of Picard number 1 and  genus $g$. If  $g\not\equiv 3 \mod 4$ then $X$ is a Fano visitor. If $g\equiv 3\mod 4$, then $X$ is a weak Fano visitor.
\end{thmx}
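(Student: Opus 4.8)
The plan is to verify, for the sequence (\ref{sequenceflips}), the hypotheses $ii)$ and $iii)$ of the Bondal--Orlov criterion recalled in the introduction (condition $i)$ being already supplied by Proposition~\ref{smallmod} and diagram~(\ref{flippp})), and thereby to build a chain of fully faithful functors
$$ D^b(X)\hookrightarrow D^b(M_1)\hookrightarrow \cdots \hookrightarrow D^b(M_j) $$
ending on a model $M_j$ with $-K_{M_j}$ ample (resp.\ nef and big). The whole argument is organized around a single rational number, the anticanonical slope $\mu^\star:=\tfrac12$ if $g$ is even and $\mu^\star:=1$ if $g$ is odd; I will show that $\mu^\star$ is at once the slope of the ray spanned by $-K_{M_i}$ and the exact threshold past which condition $iii)$ holds.

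The initial embedding $D^b(X)\hookrightarrow D^b(M_1)$ is immediate from Corollary~\ref{blowup}: $M_1=\Bl_X\mathbb{P}^g$ with $X$ of codimension $g-2\ge 2$, so Orlov's blow-up formula \cite{orlov} applies. For $1\le i\le j-1$ I would break the small map $M_i\dashrightarrow M_{i+1}$ of~(\ref{flippp}) into the disjoint twisted standard flips attached to the pairs $(c,d)\in J^i$, each replacing $\mathbb{P}(\mathcal{V}_{c,d}^-)$ by $\mathbb{P}(\mathcal{V}_{c,d}'^+)$, and read the twisted flip diagram~(\ref{twistedflip}) with $M_{i+1}$ playing the role of $X$ and $M_i$ that of $X'$. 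Proposition~\ref{twistedemd}, part~$ii)$, then produces the embedding $D^b(M_i)\hookrightarrow D^b(M_{i+1})$ as soon as the fibre dimensions obey $k_{c,d}^+\ge k_{c,d}^-$, which is exactly condition $iii)$. Feeding the formulas of Proposition~\ref{vbrank} and the relation $\mu(c,d)=\mu_i$ into this difference gives
$$ k_{c,d}^+-k_{c,d}^-=\begin{cases}(2c+1)(2\mu_i-1) & \text{if $g$ is even,}\\ 2(2c+1)(\mu_i-1) & \text{if $g$ is odd,}\end{cases} $$
so that condition $iii)$ at the wall of slope $\mu_i$ is equivalent to $\mu_i\ge\mu^\star$. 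Since the slopes decrease along~(\ref{sequenceflips}) and $i\le j-1$ forces $\mu_i\ge\mu_{j-1}>\mu^\star$, every intervening flip is admissible and the embeddings compose.

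For the target model I would first record that the blow-up gives $-K_{M_1}=\mathcal{O}_1(4,g-3)$, and that smallness of the maps preserves this class, so $-K_{M_i}=\mathcal{O}_i(4,g-3)$ for all $i$. By Lemma~\ref{lemmafl} this is precisely $2\,\iota^*(\lambda+\mu^\star f)$, i.e.\ $-K_{M_i}$ spans the ray of slope $\mu^\star$; being strictly interior to the cone spanned by $f$ and $\lambda$, it is big. Comparing with the ample cones of Proposition~\ref{amplecone}, $-K_{M_j}$ is ample exactly when $\mu^\star$ lies in the open chamber $(\mu_j,\mu_{j-1})$, and is nef but not ample when $\mu^\star$ coincides with one of the slopes $\mu_i$. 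It remains to decide when $\mu^\star$ is a slope: setting $\mu(c,d)=\mu^\star$ reduces, in both parities, to the equation $g-1-4d=2(2c+1)^2$, and since $(2c+1)^2\equiv 1\bmod 8$ the right-hand side is $\equiv 2\bmod 4$, forcing $g-1\equiv 2\bmod 4$, i.e.\ $g\equiv 3\bmod 4$; conversely for such $g$ the pair $(0,(g-3)/4)\in J$ realizes $\mu^\star=1$. Taking $j$ to be the index of the chamber determined by $\mu^\star$ — which satisfies $1\le j\le\nu$, as $\mu^\star<\mu_0$ and $\mu^\star$ exceeds the two terminal slopes — I conclude that $M_j$ is Fano when $g\not\equiv 3\bmod 4$ and weak Fano when $g\equiv 3\bmod 4$, and the chain of embeddings exhibits $X$ as a (weak) Fano visitor.

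The step I expect to be the main obstacle is the arithmetic dovetailing of the last paragraph: one must check that the \emph{same} $\mu^\star$ simultaneously controls the sign of $k_{c,d}^+-k_{c,d}^-$ and spans the anticanonical ray, and that the $2$-adic behaviour of $2(2c+1)^2$ makes $\mu^\star$ fall on a wall precisely in the residue class $g\equiv 3\bmod 4$. A secondary technical point is bookkeeping the twists: although the intermediate blocks $\Phi_m(D^b(F,\alpha^m))$ appearing in~(\ref{iidec}) are genuinely $\alpha$-twisted, the block $\tau_*\tau'^*D^b(M_i)$ is untwisted, so the functors in the chain remain embeddings of ordinary (untwisted) derived categories throughout.
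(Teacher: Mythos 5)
Your proposal is correct and follows essentially the same route as the paper: identify $-K_{M_i}=\mathcal{O}_i(4,g-3)$ with the ray of slope $1/2$ (resp.\ $1$), compare with the ample cones of Proposition~\ref{amplecone} to locate the (weak) Fano model, compute $k_{c,d}^+-k_{c,d}^-$ as a multiple of $2\mu_i-1$ (resp.\ $2\mu_i-2$) from Proposition~\ref{vbrank}, and chain the embeddings via Proposition~\ref{twistedemd}. Your explicit $2$-adic reduction of $g-1-4d=2(2c+1)^2$ just makes precise the ``numerical computation'' the paper invokes to show the anticanonical ray hits a wall exactly when $g\equiv 3\bmod 4$.
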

\begin{proof}
    Since $M_1$ is the blow up of the projective space $\mathbb{P}^g$ along the K3 surface $X$, and $M_1\dasharrow M_{i}$ is small for $1\leq i\leq \nu$, a quick computations shows that 
    $$ \omega_{M_i}^*= \mathcal{O}_i(4,g-3).$$
    Thus, from the description of the ample cone of $M_{i}$  in Proposition \ref{amplecone},  we conclude that  $M_{i}$ is Fano if and only if  $\mu_{i-1}>1/2$ and $\mu_i<1/2$ ($\mu_{i-1}>1$ and $\mu_i<1$) if $g$ is even (resp., $g$ is odd). A numerical computation shows that for $g$ even,  $\mu_i\neq 1/2$ for all $i$, and for $g$ odd, $\mu_i=1$ iff $g\equiv 3 \mod 4$ (in which case the pair $(c,d)=(0,\frac{g-3}{4})$ satisfies $\mu(c,d)=1$). Thus, since  $$\mathcal{O}(4,g-3)\in \mathbb{R}_{>0}\iota^*f+\mathbb{R}_{>0}\iota^*\lambda,$$  we obtain that   there is always  a $j\in I $ such that $M_j$ is Fano (weak Fano) if $g\not \equiv 3 \mod 4$ (resp., $g\equiv 3\mod 4$). Moreover, by  using the formulas of Proposition \ref{vbrank}, we have 
$$k_{c,d}^+-k_{c,d}^-=\begin{cases}
(2c+1)(2\mu(c,d)-1) & \text{ if $g$ is even,}  \\
 (2c+1)(2\mu(c,d)-2)& \text{ if  $g$ is odd.}  
\end{cases} $$
Therefore if $i\leq j-1$, then $k_{c,d}^+\geq k_{c,d}^-$ for every pair $(c,d)\in J$ with slope $\mu_i$. Since the standard flips are disjoint, we may apply Proposition \ref{twistedemd} and obtain a chain of fully faithful embeddings 
$$D^b(X)\hookrightarrow D^b(M_1) \hookrightarrow \ldots \hookrightarrow D^b(M_j).$$
This completes the proof of the Theorem.
\end{proof}

\begin{thmx}
Under the assumption of Theorem \ref{A}, let  $(c,d)\in J$ be a pair which  satisfies the inequality $$\frac{g-1-4d}{2(2c+1)^2}>1.$$ If $g\not\equiv 3 \mod 4$ then $\mathcal{M}(v_{c,d})$ is a Fano visitor. If $g\equiv 3\mod 4$, then $\mathcal{M}(v_{c,d})$ is a weak Fano visitor. In particular, if $d<\frac{g-3}{4}$, then $\Hilb^{d+1}(X)$ is a Fano visitor (weak Fano visitor) if $g\not\equiv 3\mod 4$  (resp., $g\equiv 3\mod 4$). 
\end{thmx}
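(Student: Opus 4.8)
The plan is to exhibit $\mathcal{M}(v_{c,d})$ as the base of one of the projective bundles flipped in the sequence (\ref{sequenceflips}) and then to invoke the twisted embedding of Remark \ref{uuntwisteembd}. First I would rewrite the hypothesis in terms of slopes. Comparing with (\ref{mucd}), one has $\mu(c,d)=\frac{g-1-4d}{2(2c+1)^2}-\tfrac12$ for $g$ even and $\mu(c,d)=\frac{g-1-4d}{2(2c+1)^2}$ for $g$ odd, so the assumption $\frac{g-1-4d}{2(2c+1)^2}>1$ is precisely $\mu(c,d)>\tfrac12$ when $g$ is even and $\mu(c,d)>1$ when $g$ is odd. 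Writing $\mu(c,d)=\mu_i$, the analysis in the proof of Theorem \ref{A} shows that the (weak) Fano model $M_j$ is characterized by $\mu_{j-1}>\tfrac12>\mu_j$ (resp.\ $\mu_{j-1}>1\geq\mu_j$); hence $\mu_i>\mu_j$, which forces $i\leq j-1$ and therefore $i+1\leq j$.

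The flip $M_i\dashrightarrow M_{i+1}$ at slope $\mu_i$ is the one displayed in (\ref{flippp}), and $\mathcal{M}(v_{c,d})$ is exactly the common base of the bundles $\mathbb{P}(\mathcal{V}_{c,d}^-)\subset M_i$ and $\mathbb{P}(\mathcal{V}_{c,d}'^+)\subset M_{i+1}$ flipped there. By Lemma \ref{lemmamukaiflop} this is a twisted standard flip, and by the computation in the proof of Theorem \ref{A} the eigenbundle ranks satisfy
$$ k_{c,d}^{+}-k_{c,d}^{-}=\begin{cases} (2c+1)(2\mu(c,d)-1) & \text{$g$ even,}\\ (2c+1)(2\mu(c,d)-2) & \text{$g$ odd,}\end{cases} $$
which is a strictly positive integer under our hypothesis. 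I would then orient the twisted standard flip diagram (\ref{twistedflip}) so that $M_{i+1}$ plays the role of the ambient space $X$: the bundle with the larger fibre dimension $k=k_{c,d}^{+}-1$ sits in $X=M_{i+1}$, while the bundle with $\ell=k_{c,d}^{-}-1$ sits in $X'=M_i$. Since $k-\ell=k_{c,d}^{+}-k_{c,d}^{-}>0$, Remark \ref{uuntwisteembd} yields a fully faithful embedding $D^b(\mathcal{M}(v_{c,d}))\hookrightarrow D^b(M_{i+1})$; because the flips attached to the pairs in $J^i$ are disjoint, one may either work locally around the component indexed by $(c,d)$, as in the proof of Lemma \ref{lemmamukaiflop}, or apply the remark to the full base $\coprod_{(c,d)\in J^i}\mathcal{M}(v_{c,d})$ and project onto the factor of interest.

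It remains to compose with the chain $D^b(M_{i+1})\hookrightarrow D^b(M_{i+2})\hookrightarrow\cdots\hookrightarrow D^b(M_j)$ constructed in the proof of Theorem \ref{A}, which is available since $i+1\leq j$ and since at every intervening step $k_{c',d'}^{+}\geq k_{c',d'}^{-}$ holds for the pairs of slope $\mu_{m}$ with $i+1\leq m\leq j-1$. This produces $D^b(\mathcal{M}(v_{c,d}))\hookrightarrow D^b(M_j)$, and as $M_j$ is Fano for $g\not\equiv 3\bmod 4$ and weak Fano for $g\equiv 3\bmod 4$, we conclude that $\mathcal{M}(v_{c,d})$ is a Fano visitor, respectively a weak Fano visitor. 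I expect the main obstacle to be purely organizational: one must be careful that the flip is oriented so that $k\geq \ell$ (equivalently that $\mathcal{M}(v_{c,d})$ embeds into the correct side $M_{i+1}$ rather than $M_i$), and that the single disjoint component attached to $(c,d)$ can be isolated so that Remark \ref{uuntwisteembd} applies to it alone rather than to the whole union indexed by $J^i$.

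Finally, for the ``in particular'' clause I specialize to $c=0$. By (\ref{vi}) we have $v_{0,d}=(1,0,-d)$, and since $\Pic X$ is generated by an ample class, every rank one torsion free sheaf with vanishing $c_1$ is an ideal sheaf $I_Z$ with $v(I_Z)=(1,0,1-\Length(Z))$; matching Mukai vectors gives $\Length(Z)=d+1$, so that $\mathcal{M}(v_{0,d})\cong\Hilb^{d+1}(X)$. For $c=0$ the inequality $\frac{g-1-4d}{2(2c+1)^2}>1$ reads $g-1-4d>2$, i.e.\ $d<\frac{g-3}{4}$, and the conclusion of the first part then gives that $\Hilb^{d+1}(X)$ is a Fano visitor when $g\not\equiv 3\bmod 4$ and a weak Fano visitor when $g\equiv 3\bmod 4$.
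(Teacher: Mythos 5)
Your proof is correct and follows essentially the same route as the paper: translate the hypothesis into $k_{c,d}^{+}-k_{c,d}^{-}>0$, apply Remark \ref{uuntwisteembd} to the disjoint twisted standard flip at the wall of slope $\mu(c,d)$ to embed $D^b(\mathcal{M}(v_{c,d}))$, and then compose with the chain of embeddings from the proof of Theorem \ref{A} into the (weak) Fano model $M_j$. You are in fact slightly more careful than the paper on two points it leaves implicit — that the embedding lands in $M_{i+1}$ (the side carrying the larger bundle $\mathbb{P}(\mathcal{V}_{c,d}'^{+})$) rather than $M_i$, and the identification $\mathcal{M}(1,0,-d)\cong\Hilb^{d+1}(X)$ for the final clause — but these are refinements of the same argument, not a different one.
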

\begin{proof}
    The condition $\frac{g-1-4d}{2(2c+1)^2}>1$ is equivalent to $k_{c,d}^+-k_{c,d}^->0$. Thus, by using Remark \ref{uuntwisteembd}, we can argue as in Theorem \ref{A}  to obtain a chain of embeddings 
    $$ D^b(\mathcal{M}(v_{c,d}))\hookrightarrow D^b(M_i)\hookrightarrow D^b(M_{i+1})\hookrightarrow \ldots \hookrightarrow D^b(M_j), $$
    where $\mu_i=\mu(c,d)$, and $M_j$ is Fano if $g\not \equiv 3 \mod 4$, and weak Fano if $g\equiv 3\mod 4$.
\end{proof}
\subsection{Examples}
Using our formulas for $k_{c,d}^\pm$ from Proposition \ref{vbrank}, we can compute the numerical data describing the flips appearing in (\ref{sequenceflips}). Here we include the cases $g=27$ and $g=28$ but the curious reader is invited to check the following code
in \cite{code} which compute it for any genus. Note that for $g=27$, each pair $(c,d)$ has an associated pair $(c',d')$ such that  $k_{c,d}^{\pm} =k_{c',d'}^{\mp}$. In the next section we will explain this symmetry and show that it is holds for every genus  $g\equiv 3\mod 4$. 
\begin{center}
    \resizebox{1\hsize}{!}{%
    $\begin{tabular} {| p {1.5 cm} | p {2 cm} | p {1.5 cm} | p {1.5 cm} |   p {1.5 cm} | p {2 cm} | p {2.3 cm} |p {2 cm} |}
    \hline
    \multicolumn{8}{ | c |  }{$g=27$}\\
    \hline
  $i$ & $(c,d)$ & $\mu_{c,d}$ & $k_{c,d}^+$ & $k_{c,d}^-$ &$k_{c,d}^+-k_{c,d}^-$ & $v_{c,d}$ & $\Dim \mathcal{M}(v_{c,d})  $   \\
     \hline
  $-1$ & $(0,-1)$ & 15 & 28 & 0 & 28 & $(1,0,1)$ & 0 \\
  0 & $(0,0)$ & 13 & 25 & 1 & 24 & $(1,0,0)$ & 2\\
 1  & $(0,1)$ & 11 & 22 & 2 & 20 & $(1,0,-1)$ & 4 \\
   2& $(0,2)$ & 9 & 19 & 3 & 16 & $(1,0,-2)$ & 6 \\
  3 & $(0,3)$& 7 & 16 & 4 & 12 & $(1,0,-3)$ & 8 \\
  4 & $(0,4)$ & 5 & 13 & 5 & 8 & $(1,0,-4)$ & 10 \\
  5 & $(0,5)$ & 3 & 10 & 6 & 4 & $(1,0,-5)$ & 12 \\
 6 & $(1,-1)$ & 5/3 & 16 & 12 & 4 & $(3,-h,9)$ & 0 \\
 7  & $(0,6)$ & 1 & 7 & 7 & 0 & $(1,0,-6)$ & 14 \\
  7 & $(1,2)$ & 1& 11 & 11 & 0 & $(3,-h,8)$ & 6 \\
  8& $(2,-1)$ & 3/5 & 12 & 16 & -4 & $(5,-2h,21)$ & 0 \\
  9 & $(1,5)$ & 1/3 & 6 & 10 & -4 & $(3,-h,7)$ & 12 \\
 10  & $(2,4)$ & 1/5 & 5 & 13 & -8 & $(5,-2h,20)$ & 10 \\
 11  & $(3,3)$& 1/7 & 4 & 16 & -12 & $(7,-3h,33)$ & 8 \\
  12 & $(4,2)$ & 1/9 & 3 & 19 & -16 & $(9,-4h,46)$ & 6 \\
  13 & $(5,1)$ & 1/11 & 2 & 22 & -20 & $(11,-5h,59)$& 4 \\
 14  & $(6,0)$ & 1/13 & 1 & 25 & -24 & $(13,-6h,72)$ & 2 \\
 15  & $(7,-1)$ & 1/15 & 0 & 28 & -28 & $(15,-7h,85)$ & 0 \\
    \hline
    \end{tabular}$%
    }
\end{center}
\vspace{0.2cm}
\begin{center}
\resizebox{1\hsize}{!}{%
     $\begin{tabular} {  | p {1.5 cm} | p {2 cm} | p {1.5 cm} | p {1.5 cm} |   p {1.5 cm} | p {2 cm} | p {2.3 cm} |p {2 cm} |}
    \hline
    \multicolumn{8}{ | c |  }{$g=28$}\\
    \hline
   $i$& $(c,d)$ & $\mu_{c,d}$ & $k_{c,d}^+$ & $k_{c,d}^-$ &$k_{c,d}^+-k_{c,d}^-$ & $v_{c,d}$ & $\Dim \mathcal{M}(v_{c,d})  $   \\
     \hline
  $-1$  & $(0,-1)$ & 15 & 29 & 0 & 29 & $(1,0,1)$ & 0 \\
  0&  $(0,0)$ & 13 & 26 & 1 & 25 & $(1,0,0)$ & 2 \\
  1&  $(0,1)$ & 11 & 23 & 2 & 21 & $(1,0,-1)$ & 4 \\
   2& $(0,2)$ & 9 & 20 & 3 & 17 & $(1,0,-2)$ & 6 \\
   3& $(0,3)$& 7 & 17 & 4 & 13 & $(1,0,-3)$ & 8 \\
   4& $(0,4)$ & 5 & 14& 5 & 9 & $(1,0,-4)$ & 10 \\
   5& $(0,5)$ & 3 & 11 & 6 & 5 & $(1,0,-5)$ & 12\\
    6& $(0,6)$ & 1 & 8 & 7 & 1 & $(1,0,-6)$ & 14 \\
   6& $(1,0)$ & 1 & 15 & 12 & 3 & $(3,-h,9)$ & 2 \\
   7& $(1,3)$ & 1/3 & 10  & 11 & -1 & $(3,-h,8)$ & 6 \\
    \hline
    \end{tabular}$%
    }
\end{center}

\section{Case $g\equiv 3 \mod 4$}\label{section9}

As observed in the previous section, the numerical table for $g=27$  has the property that for each pair $(c,d)\in J$,  there exists another pair $(c',d')\in J$  such that $k_{c,d}^{\pm}=k_{c',d'}^{\mp}$. This section focuses on studying  this symmetry  where we provide a rigorous proof that holds for any genus  $g\equiv 3 \mod 4$. In a certain sense, the pairs $(c,d)$ and $(c',d')$ are related via a derived equivalence $\Theta:D^b(X)\to D^b(\hat{X})$ where $\hat{X}$ is a dual K3 surface in the sense of Mukai. 

Let $X$ be a K3 surface with Picard rank 1 and genus $g\equiv 3\mod 4$. Then there is a diagram of birational maps 
\begin{equation}
     \resizebox{0.91\hsize}{!}{%
   $\xymatrix{  & \Bl_XM_0 = M_1 \ar[d] \ar@{-->}[r] &  M_2 \ar@{-->}[r]  &  \ldots \ar@{-->}[r] & M_{\nu} \ar@{-->}[r] & M_{\nu+1} \\
     X  \ar@{^{(}->}[r] &  \mathbb{P}^g=M_0 &   &    & &      
    }$ %
    }
\end{equation}
Consider the K3 surface $$\hat{X}=\mathcal{M}(2,-h,(g-1)/2)$$ parametrizing $h$-Gieseker stable vector bundles with Mukai vector $(2,-h,(g-1)/2)$. Since $(g-1)/2$ is co-prime with 2, by results of Mukai  \cite[Proposition~1.1,Theorem~1.2]{Mukai_1999}, we know that  $\hat{X}$ has Picard rank 1 and genus $g$. Furthermore, if $\hat{h}=c_1(\hat{\Lambda})$ denotes the  class of the ample generator $\hat{\Lambda}\in\Pic \hat{X}$, then there exists  a  normalized universal vector bundle  $\mathcal{F}\in \Coh(X\times \hat{X})$ whose first Chern class is
\begin{equation}\label{defnormalized}
    c_1(\mathcal{F})=\pi_X^*(-h)\otimes \pi_{\hat{X}}^*(\hat{h}).
\end{equation}
Since $X$ and $\hat{X}$ have the same genus, the diagram of flips associated to $\hat{X}$ has the same length:
\begin{equation}
     \resizebox{0.91\hsize}{!}{%
   $\xymatrix{  & \Bl_{\hat{X}} \hat{M_0} = \hat{M_1} \ar[d] \ar@{-->}[r] &  \hat{M}_2 \ar@{-->}[r]  &  \ldots \ar@{-->}[r] & \hat{M}_{\nu} \ar@{-->}[r] & \hat{M}_{\nu+1} \\
     \hat{X} \ar@{^{(}->}[r] &  \mathbb{P}^g=\hat{M_0} &   &    & &     
    }$ %
    }
\end{equation}
Here, each $\hat{M}_{i}$ parametrizes objects $E\in \Coh^{-1/2}\hat{X}$ with Mukai vector  $$v(E)=\hat{v}=(0,\hat{h},1-g)\in H_{\Alg}^*(\hat{X},\mathbb{Z}).$$ Following this convention, for $(c,d)\in J$, we write $\hat{v}_{c,d}$ for the Mukai vector 
$$ \hat{v}_{c,d}=(2c+1,-c\hat{h},\tfrac{(g-1)c^2-d}{2c+1}) \in H_{\Alg}^*(\hat{X},\mathbb{Z}).$$
The main result of this section is the following.
\begin{proposition}\label{involutioncor}
There is a derived equivalence $\Theta:D^b(X)\to D^b(\hat{X})$ inducing isomorphisms
    $$\Theta: M_{i}\overset{\sim}{\to} \hat{M}_{\nu+1-i}, \qquad \Theta\circ \Psi: \underset{(c,d)\in J^i}{\coprod}\mathcal{M}(v_{c,d})\overset{\sim}{\to}  \underset{(c,d)\in J^{\nu-i}}{\coprod}\mathcal{M}(\hat{v}_{c,d}),$$
   which fit into the  commutative diagram 
    \begin{equation*}
    \resizebox{1\hsize}{!}{%
    $\xymatrix{
    &  &   \underset{(c,d)\in J^i}{\coprod}\mathcal{M}(v_{c,d}) \ar[ddd]_{\Theta\circ \Psi}    & & \\
      \underset{(c,d)\in J^i}{\coprod}\mathbb{P}(\mathcal{V}_{c,d}^-) \ar[d] \ar[urr] \ar@{^{(}->}[r]  &  M_{i} \ar@{-->}[rr]   \ar[d]^{\Theta} & &  M_{i+1} \ar[d]^{\Theta} &  \underset{(c,d)\in J^i}{\coprod}\mathbb{P}(\mathcal{V}_{c,d}'^+) \ar@{_{(}->}[l] \ar[ull] \ar[d] \\
    \underset{(c,d)\in J^{\nu-i}}{\coprod}\mathbb{P}(\hat{\mathcal{V}}_{c,d}'^+) \ar@{^{(}->}[r] \ar[drr] & \hat{M}_{\nu+1-i}    & &  \hat{M}_{\nu-i} \ar@{-->}[ll] &   \underset{(c,d)\in J^{\nu-i}}{\coprod}\mathbb{P}(\hat{\mathcal{V}}_{c,d}^-) \ar@{_{(}->}[l] \ar[dll]    \\
  &  &   \underset{(c,d)\in J^{\nu-i}}{\coprod}\mathcal{M}(\hat{v}_{c,d})   & &     }$%
  }
\end{equation*}
Moreover, for each pair $(c,d)\in J^i$,  $\Theta$ restricts to an isomorphism of projectives bundles $\mathbb{P}(\mathcal{V}_{c,d}^-)\overset{\sim}{\to} \mathbb{P}(\hat{\mathcal{V}}_{c',d'}'^+)$ where $(c',d')\in J^{\nu-i}$ is the pair $(c',d')=\left(\tfrac{(g-3)/4-d-c}{2c+1},d\right)$.
\end{proposition}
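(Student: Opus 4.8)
The plan is to realize $\Theta$ as a Fourier--Mukai equivalence and to extract the whole statement from its induced isometry on Mukai lattices together with its compatibility with the dualizing functor $\Psi$. Since $g\equiv 3\bmod 4$, the integer $(g-1)/2$ is odd, so $\gcd(2,(g-1)/2)=1$ and $\hat X=\mathcal{M}(2,-h,(g-1)/2)$ is a fine moduli space; let $\mathcal{F}$ be the universal bundle normalized by (\ref{defnormalized}) and let $\Theta\colon D^b(X)\to D^b(\hat X)$ be the associated Fourier--Mukai transform, an equivalence by Mukai and Bridgeland. The first substantive step is to compute the Hodge isometry $\Theta^H\colon H^*_{\Alg}(X,\mathbb{Z})\to H^*_{\Alg}(\hat X,\mathbb{Z})$. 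Working in the rank-three lattices with basis the rank, the classes $h$ (resp.\ $\hat h$), and the point classes, I would pin $\Theta^H$ down from $v(\mathcal{F})$: its rank is $2$, its $c_1$ is prescribed by (\ref{defnormalized}), and $\mathrm{ch}_2(\mathcal{F})$ is fixed by requiring that the slice restrictions $\mathcal{F}_y$ and $\mathcal{F}_x$ carry the Mukai vectors $(2,-h,(g-1)/2)$ and $(2,\hat h,(g-1)/2)$.

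From the isometry I would then read off the numerical statements. Because $\langle v_{c,d},v_{c,d}\rangle=2d$ and $\Theta^H$ preserves the Mukai pairing, the discriminant is preserved, forcing $d'=d$. Next I would check $\Theta^H(v)=\pm\hat v$, which identifies $\mathcal{M}(v)$ with $\mathcal{M}(\hat v)$ and lets $\Theta$ transport the one-parameter family $\sigma_\alpha$; the essential point is that $\Theta^H$ inverts slopes, $\mu(c,d)\,\mu(c',d')=1$, so the chamber order is reversed with fixed point $\mu=1$, giving $M_i\overset{\sim}{\to}\hat M_{\nu+1-i}$ and matching $\mu_i$ with $\mu_{\nu-i}=1/\mu_i$. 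Using (\ref{mucd}) and $d'=d$, the relation $\mu(c,d)\mu(c',d')=1$ becomes $(g-1-4d)^2=4(2c+1)^2(2c'+1)^2$, whence $2c'+1=\tfrac{g-1-4d}{2(2c+1)}$ and
\[
 c'=\frac{(g-3)/4-d-c}{2c+1},
\]
which is the asserted formula and places $(c',d')\in J^{\nu-i}$.

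To obtain the isomorphisms of flip centres and the eigenbundle matching, I would establish the functor identity $\Theta\circ\Psi\simeq\hat\Psi\circ\Theta$ (up to shift). Granting this, $\Theta$ conjugates the antisymplectic involution $\tau$ to $\hat\tau$ and, by the sign analysis in Proposition \ref{desflip} (through Lemma \ref{lemmas4}), carries the $+$-component $M_i$ to the $+$-component $\hat M_{\nu+1-i}$. Applying $\Theta$ to a destabilizing triangle $T\to E\to\Psi(T)$ defining a point $E\in\mathbb{P}(\mathcal{V}_{c,d}^-)$ and using $\Theta\Psi(T)\simeq\hat\Psi(\Theta T)$ turns it into a triangle $\hat\Psi(S)\to\Theta(E)\to S$ with $S=\Theta\Psi(T)\in\mathcal{M}(\hat v_{c',d'})$; this is exactly why $\Theta$ acts on the total spaces while $\Theta\circ\Psi$ acts on the bases. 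Since the corresponding flip on $\hat X$ is run in the opposite direction, the centre lying in $\hat M_{\nu+1-i}$ is the flipped-to bundle $\mathbb{P}(\hat{\mathcal{V}}_{c',d'}'^+)$ (recall $\mathcal{V}^\pm=\Ann(\mathcal{V}'^\mp)$ from (\ref{eingenrelations1})), yielding $\mathbb{P}(\mathcal{V}_{c,d}^-)\overset{\sim}{\to}\mathbb{P}(\hat{\mathcal{V}}_{c',d'}'^+)$; commutativity of the large diagram then follows by functoriality.

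The hard part will be the functor-level identity $\Theta\circ\Psi\simeq\hat\Psi\circ\Theta$ of the third step. The numerical conjugation $\Theta^H\Psi^H(\Theta^H)^{-1}=\hat\Psi^H$ is a routine lattice check, but upgrading it to an isomorphism of functors requires showing that the normalized kernel $\mathcal{F}$ is genuinely equivariant for the product of the two dualizing involutions, for which I expect to use the uniqueness of the universal family and the reflexivity $\hat{\hat X}\simeq X$. The cohomological computation of $\Theta^H$ in the second step is the technical core, but once $\mathrm{ch}_2(\mathcal{F})$ is determined it reduces to a finite linear-algebra verification.
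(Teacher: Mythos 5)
Your overall strategy coincides with the paper's: normalize the Fourier--Mukai kernel, prove $\hat\Psi\circ\Theta\simeq\Theta\circ\Psi$, read off the lattice isometry, observe that the stability parameter is inverted so the chamber order reverses, and transport destabilizing triangles to match the flip centres. The computation $2c'+1=\tfrac{g-1-4d}{2(2c+1)}$ and $d'=d$ agrees with the paper's $\vartheta(v_{c,d})=\hat v-\hat v_{c',d'}$. Two of your steps are executed differently from the paper, and one of them hides a real subtlety. First, you justify the order reversal by asserting that $\Theta^H$ ``inverts slopes''; the paper instead proves $\Theta(\Coh^{-1/2}X)=\Coh^{-1/2}\hat X$ by checking that skyscraper sheaves go to shifted stable bundles and vice versa, and then computes the $\widetilde{\Gl}^+(2,\mathbb{R})$ element to get $\hat\alpha=\tfrac{1}{\alpha(2g-2)}$; some version of this heart computation is needed to make your slope-inversion claim precise, since the isometry alone does not order the walls. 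Second, and more seriously, you identify which fixed component goes where by a ``sign analysis'' through Lemma \ref{lemmas4}: this requires knowing that the chosen natural isomorphism $\hat\Psi\circ\Theta\simeq\Theta\circ\Psi$ is compatible with the double-dual units $\eta^{\Lambda^*[1]}$ and $\eta^{\hat\Lambda^*[1]}$, because rescaling that isomorphism by $-1$ swaps the two eigencomponents; this is exactly the kind of delicate compatibility the paper spends Section \ref{sectiondiv} establishing for a different functor, and in the proof of this proposition the paper deliberately avoids it by a much cheaper argument: it suffices to check the component matching at $i=0$, where $M_0=\mathbb{P}^g$ has Picard rank $1$ while $\hat\Omega_{\nu+1}$ has Picard rank at least $3$, and then propagate by birationality. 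I would recommend replacing your sign argument by this Picard-rank comparison, or else explicitly fixing the natural isomorphism and verifying the unit compatibility. Finally, your route to $\hat\Psi\circ\Theta\simeq\Theta\circ\Psi$ via uniqueness of the universal family is heavier than necessary: the paper gets it directly from Grothendieck--Verdier duality plus $\mathcal{F}^\vee\simeq\mathcal{F}\otimes(\det\mathcal{F})^*$ and the normalization (\ref{defnormalized}).
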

The second result of this section is the following  which implies  \cite[Theorem~3.1]{cremona} when $g=7$.
\begin{proposition}\label{nonbir}
    $\hat{X}$ is not isomorphic to $X$. Let $i=(c,d)\in J$ be a pairs of integers satisfying\footnote{This is equivalent to say that $(c,d)$ has slope $\mu(c,d)=1$.}
    $$ g-1-4d=2(2c+1)^2.$$
    Then $\mathcal{M}(v_{i})\simeq\mathcal{M}(\hat{v}_{i})$. In particular for $i=(0,(g-3)/4)$, we obtain the  isomorphism $\Hilb^{(g+1)/4}X\simeq \Hilb^{(g+1)/4} \hat{X}$. 
\end{proposition}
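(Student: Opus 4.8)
The plan is to separate the statement into the two isomorphism assertions, which I expect to be essentially corollaries of Proposition~\ref{involutioncor}, and the non-isomorphism $\hat X\not\simeq X$, which is the real content and where the cohomological action of $\Theta$ enters. For the isomorphisms I would first record that the hypothesis $g-1-4d=2(2c+1)^2$ is exactly the condition $\mu(c,d)=1$, so the relevant pairs sit on the single self-dual wall in the middle of the sequence of flips. Substituting this relation into the formula $(c',d')=\big(\tfrac{(g-3)/4-d-c}{2c+1},d\big)$ of Proposition~\ref{involutioncor} gives $c'=c$ and $d'=d$, so these pairs are fixed by the duality $(c,d)\mapsto(c',d')$ and the corresponding index satisfies $\nu-i=i$. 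Hence $\Theta\circ\Psi$ restricts to an isomorphism $\mathcal M(v_{c,d})\overset{\sim}{\to}\mathcal M(\hat v_{c,d})$ between the bases of the flipped bundles. The Hilbert-scheme statement is then the case $(c,d)=(0,(g-3)/4)$, where $v_{c,d}=(1,0,-(g-3)/4)$ is the Mukai vector of $\Hilb^{(g+1)/4}X$.

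For the non-isomorphism I would work entirely with the Hodge isometry $\Theta^H\colon \tilde H(X,\mathbb Z)\to\tilde H(\hat X,\mathbb Z)$ attached to the Fourier--Mukai equivalence $\Theta$. Since $X$ and $\hat X$ both have Picard rank $1$ and genus $g$, their algebraic Mukai lattices are $L\simeq U\oplus\langle 2g-2\rangle$ with basis $(1,0,0),(0,h,0),(0,0,1)$, the transcendental lattices are $T=L^\perp$, and $A_L\simeq A_T\simeq\mathbb Z/(2g-2)$. The map $\Theta^H$ restricts to isometries $L_X\to L_{\hat X}$ and $T_X\to T_{\hat X}$ and induces the same isomorphism on discriminant groups. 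I would then compute $\Theta^H$ on the three algebraic generators: $\Theta^H(0,0,1)=(2,\hat h,(g-1)/2)$ (the class of the stable bundle represented by a point of $X$, whose rank and $c_1$ are read from the normalized family $\mathcal F$ and whose last coordinate is forced by isotropy), and $\Theta^H(0,h,1-g)=(0,\hat h,1-g)$ by Proposition~\ref{involutioncor}. Solving the resulting linear system yields $\Theta^H(0,h,0)=2(g-1)(1,0,0)+g\,(0,\hat h,0)+\ast\,(0,0,1)$, so on the discriminant group $A_L=\langle(0,h,0)/(2g-2)\rangle$ the induced map is multiplication by $g$, which is a unit modulo $2g-2$ because $g$ is odd.

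To conclude I would invoke the Torelli theorem. An isomorphism $f\colon X\overset{\sim}{\to}\hat X$ induces a graded, ample-preserving Hodge isometry with $f^H(h)=\hat h$, hence acting by $+1$ on $A_L$; consequently $(f^H)^{-1}\circ\Theta^H$ would be a Hodge auto-isometry of $T_X$ inducing multiplication by $\pm g$ on $A_{T_X}$. For $X$ general with $\rho(X)=1$ the group $\mathrm O_{\mathrm{Hdg}}(T_X)$ equals $\{\pm 1\}$ and acts by $\pm1$ on the discriminant, so this would force $g\equiv\pm1\pmod{2g-2}$; but $g\not\equiv\pm1\pmod{2g-2}$ for every $g\geq4$, a contradiction. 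This is precisely the argument of \cite{cremona} in the case $g=7$ (where $2g-2=12$ and $g=7\not\equiv\pm1$), now carried out in general.

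I expect the main obstacle to be the last step: controlling the Hodge isometry group $\mathrm O_{\mathrm{Hdg}}(T_X)$ so as to rule out exotic auto-isometries (coming from real or complex multiplication) whose action on the discriminant group need not be $\pm1$; this is where the generality of $X$ must be used, and it is the only place where $\rho(X)=1$ alone is not formally enough. By contrast, the exact signs and Todd-class normalizations entering $\Theta^H$ are harmless, since the conclusion only uses that the discriminant action is $\pm g^{\pm1}$, and none of these is congruent to $\pm1$ modulo $2g-2$.
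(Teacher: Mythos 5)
Your proposal is correct and follows essentially the same route as the paper: the self-duality $(c',d')=(c,d)$ under the involution of Proposition \ref{involutioncor} gives the moduli isomorphisms, and the non-isomorphism is obtained by computing that the Hodge isometry induced by $\Theta$ acts by multiplication by $g$ on the discriminant group of the (algebraic, hence transcendental) Mukai lattice, contradicting the triviality of Hodge isometries of $T_X$ for Picard rank $1$. The final worry you raise is not a real obstacle: since $\operatorname{rank} T_X=21$ is odd, the finite cyclic group of Hodge isometries has order $n$ with $\phi(n)\mid 21$, forcing $n\le 2$, which is exactly the fact the paper cites from Oguiso.
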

The proof of the first statement follows the main idea used in  \cite[Section~3]{cremona}. More precisely, suppose that there is an isomorphism $f:X\simeq \hat{X}$. By using the derived equivalence $\Theta:D^b(X)\to D^b(X)$, we construct an isometry on the transcendental lattice of $\hat{X}$  which acts non-trivially on its discriminant lattice. This leads to a contradiction since for a K3 surface with  Picard rank 1, any isometry acting on the transcendental lattice must be  trivial (see \cite{Oguiso}). The second part follows from Proposition \ref{involutioncor}.

\textbf{Examples:} 
\begin{enumerate}[(a)]
    \item $g=7$. Here $\hat{X}=\mathcal{M}(2,-h,3)$. In \cite[Theorem~3.1]{cremona}, the authors proved that $\hat{X}\not\simeq X$ and so our results extends this for every K3 surface of Picard rank 1 and genus $g\equiv 3\mod 4$. Note that  $(g+1)/4=2$ and so $\Hilb^2X\simeq \Hilb^2 \hat{X}$ which  was also obtained by Yoshioka (see \cite[Example~7.2]{yoshioka}).
    \item $g=27$. Here $\hat{X}=\mathcal{M}(2,-h,13)$. Using the table of the previous Section, we have that $(0,6)$ and $(1,2)$ have slope 1 and so we obtain the isomorphisms $\Hilb^{7}X\simeq \Hilb^7 \hat{X}$ and $\mathcal{M}(3,-h,8)\simeq \mathcal{M}(3,-\hat{h},8)$. 
\end{enumerate}

Let $\Theta=\Phi_{\mathcal{F}^\vee[1]}:D^b(X)\to D^b(\hat{X})$ be the Fourier-Mukai transform with kernel $\mathcal{F}^\vee[1]$. Here $\mathcal{F}\in D^b(X\times \hat{X})$ is the normalized universal vector bundle with first Chern class given by (\ref{defnormalized}). We denote by $\vartheta:H_{\Alg}^*(X,\mathbb{Z})\to H_{\Alg}^*(\hat{X},\mathbb{Z})$ the induced isometry between the algebraic Mukai lattices of $X$ and $\hat{X}$. Let $\hat{\Psi}:D^b(\hat{X})\to D^b(\hat{X})$ be the contravariant involution given by $\hat{\Psi}(-)=(-)^\vee\otimes \hat{\Lambda}^*[1]$. 

We start with the following Lemma.
\begin{lemma}\label{latticeaction}
     $\hat{\Psi}\circ \Theta=\Theta\circ \Psi$ and $\vartheta:H_{\Alg}^*(X,\mathbb{Z})\to  H_{\Alg}^*(\hat{X},\mathbb{Z})$ satisfies the following:
     \begin{equation}
          \begin{split}
          \vartheta(0,0,-1)& =(2,-\hat{h},(g-1)/2),\\
    \vartheta(2,-h,(g-1)/2)&= (0,0,-1), \\  
    \vartheta(v)&=\hat{v}.
\end{split}
     \end{equation}
\end{lemma}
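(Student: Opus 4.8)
The plan is to treat the two assertions separately: first the functorial identity $\hat\Psi\circ\Theta=\Theta\circ\Psi$, obtained by kernel calculus, and then the three values of $\vartheta$, obtained by evaluating $\Theta$ on well-chosen objects and using that $\vartheta$ is a lattice isometry.

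\textbf{The identity $\hat\Psi\circ\Theta=\Theta\circ\Psi$.} I would argue entirely with Fourier--Mukai kernels. Writing $\Theta(E)=R\pi_{\hat X,*}(\pi_X^*E\otimes\mathcal F^\vee[1])$, the first step is to compute $\Theta(E)^\vee$ by Grothendieck--Verdier duality for $\pi_{\hat X}\colon X\times\hat X\to\hat X$. Since $X$ is a K3 surface the relative dualizing complex is $\mathcal O[2]$, so duality yields $\Theta(E)^\vee\simeq R\pi_{\hat X,*}(\pi_X^*E^\vee\otimes\mathcal F[1])$; applying $-\otimes\hat\Lambda^*[1]$ together with the projection formula then gives a kernel expression for $\hat\Psi(\Theta(E))$. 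Comparing this with $\Theta(\Psi(E))=R\pi_{\hat X,*}(\pi_X^*E^\vee\otimes\pi_X^*\Lambda^*\otimes\mathcal F^\vee[2])$, the identity reduces to the single isomorphism $\mathcal F\otimes\pi_{\hat X}^*\hat\Lambda^*\simeq\pi_X^*\Lambda^*\otimes\mathcal F^\vee$. As $\mathcal F$ has rank two, $\mathcal F^\vee\simeq\mathcal F\otimes(\det\mathcal F)^{-1}$, so this is in turn equivalent to $\det\mathcal F\simeq\pi_X^*\Lambda^*\otimes\pi_{\hat X}^*\hat\Lambda$, which follows from the normalization $c_1(\mathcal F)=-\pi_X^*h+\pi_{\hat X}^*\hat h$ of (\ref{defnormalized}) once one notes that on a product of K3 surfaces $\Pic(X\times\hat X)\cong\Pic X\oplus\Pic\hat X$ (by Künneth, since $H^1(\mathcal O)=0$), so that a line bundle there is determined by its first Chern class.

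\textbf{The boundary vectors.} The structural input is that $\Theta$ is, up to shift, the inverse of the Mukai equivalence attached to the fine moduli space $\hat X=\mathcal M(2,-h,(g-1)/2)$. Indeed $\Phi_{\mathcal F}\colon D^b(\hat X)\to D^b(X)$ sends $\mathcal O_{\hat x}\mapsto F_{\hat x}:=\mathcal F|_{X\times\{\hat x\}}$ and is an equivalence with inverse $\Phi_{\mathcal F^\vee[2]}$; hence $\Theta=\Phi_{\mathcal F^\vee[1]}=\Phi_{\mathcal F^\vee[2]}[-1]$ satisfies $\Theta(F_{\hat x})\simeq\mathcal O_{\hat x}[-1]$, which gives $\vartheta(2,-h,(g-1)/2)=(0,0,-1)$. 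Dually, base change gives $\Theta(\mathcal O_x)\simeq G_x^\vee[1]$ with $G_x:=\mathcal F|_{\{x\}\times\hat X}$ a rank-two bundle; the normalization forces $c_1(G_x)=\hat h$, so $v(G_x^\vee)=(2,-\hat h,s)$, and since $v(\mathcal O_x)^2=0$ the isometry property of $\vartheta$ pins down $s=(g-1)/2$, yielding $\vartheta(0,0,-1)=(2,-\hat h,(g-1)/2)$.

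\textbf{The vector $v$.} From the identity of the first part I get the cohomological relation $\hat\Psi^H\circ\vartheta=\vartheta\circ\Psi^H$, where $\Psi^H,\hat\Psi^H$ are the induced involutions of the algebraic Mukai lattices. Since the antisymplectic involution induced by $\Psi$ preserves $\mathcal M(v)$ we have $\Psi^H(v)=v$, and a short computation of $\Psi^H$ from $\Psi(E)=E^\vee\otimes\Lambda^*[1]$ shows that its $(+1)$-eigenspace is exactly $\mathbb Q\cdot v$ (and likewise $\mathbb Q\cdot\hat v$ on $\hat X$). Hence $\vartheta(v)$ is $\hat\Psi^H$-fixed, so $\vartheta(v)=q\hat v$, and $q=\pm1$ by primitivity. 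To fix $q=+1$ I would feed the two already-computed values into the isometry relations: the resulting linear constraints leave exactly the ambiguity $\vartheta(v)=\pm\hat v$, and the sign is determined by the orientation-preserving property of derived equivalences of K3 surfaces (equivalently, by the fact that $\Theta$ carries the honestly $\sigma$-stable objects of $\mathcal M(v)$ to stable objects whose Mukai vector lies in the same component as $\hat v$, not $-\hat v$). I expect the main obstacle to be precisely this last sign: the bookkeeping of shifts in the duality computation of the first part is routine but must be exact (a shift error would change the $\hat\Lambda$-twist, and the rank-two self-duality of $\mathcal F$ is the one nontrivial ingredient there), whereas removing the $\pm$ ambiguity in $\vartheta(v)=\hat v$ genuinely requires orientation theory or one further explicit input, such as the mixed Künneth component of $\mathrm{ch}_2(\mathcal F)$; I also note throughout that the identification of $\Theta$ with a shifted inverse Mukai equivalence presupposes that $\hat X$ is fine and $\Phi_{\mathcal F}$ an equivalence, which is exactly the cited content of Mukai's results.
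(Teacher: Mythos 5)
Your proposal is correct and follows essentially the same route as the paper: the functorial identity is obtained by the identical Grothendieck--Verdier/kernel computation reducing to $\det\mathcal F\simeq\pi_X^*\Lambda^*\otimes\pi_{\hat X}^*\hat\Lambda$ via the rank-two self-duality of $\mathcal F$ and the normalization of $c_1(\mathcal F)$, and the two boundary vectors are pinned down by evaluating on skyscrapers and fibers of $\mathcal F$ together with isotropy, exactly as in the paper. Your derivation that $\vartheta(v)\in\{\pm\hat v\}$ via the $(+1)$-eigenspace of $\Psi^H$ is an equivalent reformulation of the paper's orthogonal-complement argument, and the sign ambiguity you flag is resolved in the paper precisely by the stability/heart argument you mention parenthetically, namely the forward reference to $\Theta(\Coh^{-1/2}X)=\Coh^{-1/2}\hat X$ established in the proof of Proposition \ref{hkaction}.
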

\begin{proof}
     Let $E\in D^b(X)$. By the Grothendieck-Verdier duality applied to $\pi_{\hat{X}}:X\times \hat{X}\to \hat{X}$, we have  
\begin{equation}\label{psifunctor}
    \begin{split}
        \hat{\Psi}\circ \Theta & = \SheafHom_{\hat{X}}((\pi_{\hat{X}})_*(\mathcal{F}^\vee[1]\otimes \pi_X^*E),\hat{\Lambda}^*[1]) \\
 &=(\pi_{\hat{X}})_*(\SheafHom_{X\times \hat{X}}(\mathcal{F}^\vee[1]\otimes \pi_X^*E, \pi_{\hat{X}}^* \hat{\Lambda}^*[3]))\\
 &=(\pi_{\hat{X}})_*(\mathcal{F} \otimes \pi_X^*\Lambda \otimes \pi_{\hat{X}}^*\hat{\Lambda}^*[1] \otimes \pi_{\hat{X}}^*(E^\vee \otimes \Lambda^*[1])) \\
 &= \Phi_{\tilde{\mathcal{F}}}(\Psi(E)),
    \end{split}
\end{equation}
where $\Phi_{\tilde{\mathcal{F}}}:D^b(X)\to D^b(\hat{X})$ is the Fourier-Mukai transform with kernel
$$\tilde{\mathcal{F}}=\mathcal{F}\otimes \pi_X^*(\Lambda)\otimes \pi_{\hat{X}}^*(\hat{\Lambda}^*)[1] \in D^b(X\times \hat{X}).$$
Since $\mathcal{F}$ is a rank 2 vector, then $\mathcal{F}^\vee\simeq \mathcal{F}\otimes (\det F)^*$ and by (\ref{defnormalized}), 
we obtain that $\tilde{\mathcal{F}}\simeq \mathcal{F}^\vee[1]$ and the first statement follows.

For the second part, consider a point $x\in X$. Since $\mathcal{F}$ is rank 2 vector bundle with first Chern class given by (\ref{defnormalized}), we see that $\Theta(\mathcal{O}_{x})=V[1]$ where $V$ is a vector bundle with rank 2 and $c_1(V)=\hat{h}$.  Thus $\vartheta(0,0,-1)=(2,-\hat{h},s)$ for some integer $s\in\mathbb{Z}$. Using that $\vartheta$ is an isometry, we have that $(2,-\hat{h},s)^2=\vartheta(0,0,-1)^2=0$ where we obtain that $s=(g-1)/2$. Using a similar argument with the Fourier-Mukai functor $\Theta^{-1}=\Phi_{\mathcal{F}[1]}:D^b(\hat{X})\to D^b(X)$ and a point  $\hat{x}\in \hat{X}$, we  deduce that $\vartheta(2,-h,(g-1)/2)=(0,0,-1)$. Finally, since $\vartheta$ is an isometry and that 
$$\langle v\rangle =\langle (0,0,-1), (2,-h,(g-1)/2)\rangle^\perp, \qquad\langle \hat{v}\rangle =\langle (0,0,-1), (2,-\hat{h},(g-1)/2)\rangle^\perp,$$
we see that $\vartheta(v)\in \{\pm \hat{v}\}$. During the proof of Proposition \ref{hkaction} below, we will show  that $\Theta(\Coh^{-1/2}X)=\Coh^{-1/2}\hat{X}$. Since $v$ and $\hat{v}$  lie in the  image under the Mukai vector of $\Coh^{-1/2}X$ and $\Coh^{-1/2}\hat{X}$ respectively, this forces the case $\vartheta(v)=\hat{v}$ and the Lemma follows. 
\end{proof}
Before proving Proposition \ref{involutioncor}, we first describe the action of $\Theta$ on the flop $\mathcal{M}_{i}\dasharrow \mathcal{M}_{i+1}$. 
\begin{proposition}\label{hkaction}
    The equivalence $\Theta:D^b(X)\to D^b(\hat{X})$ induces  isomorphisms 
    $$ \Theta:\mathcal{M}_{i}\overset{\sim}{\to}\mathcal{M}_{\nu+1-i},\qquad  \Theta:\underset{(c,d)\in J^{i}}{\coprod}\mathcal{M}(v_{c,d})\overset{\sim}{\to} \underset{(c,d)\in J^{\nu-i}}{\coprod} \mathcal{M}(\hat{v}-\hat{v}_{c,d}),$$
    which are compatible with the Mukai flops, i.e., they fit into the commutative diagram 
    \begin{equation*}\label{compatible}
    \resizebox{1\hsize}{!}{%
    $\xymatrix{
    &  & \underset{(c,d)\in J^{i}}{\coprod}\mathcal{M}(v_{c,d}) \times\mathcal{M}(\hat{v}-\hat{v}_{c,d}) \ar[ddd]^{\Theta\times\Theta}    & &   \\
   \underset{(c,d)\in J^{i}}{\coprod}\mathcal{P}_{c,d}\ar[d] \ar[urr] \ar@{^{(}->}[r]  &  \mathcal{M}_{i}\ar@{-->}[rr]   \ar[d]^{\Theta} & &  \mathcal{M}_{i+1} \ar[d]^{\Theta} &   \underset{(c,d)\in J^{i}}{\coprod}\mathcal{P}_{c,d}' \ar@{_{(}->}[l]  \ar[d] \ar[ull] \\
 \underset{(c,d)\in J^{\nu-i}}{\coprod} \mathcal{P}_{c,d}' \ar@{^{(}->}[r] \ar[drr] &  \mathcal{M}_{\nu+1-i}    & &  \mathcal{M}_{\nu-i} \ar@{-->}[ll] &  \underset{(c,d)\in J^{\nu-i}}{\coprod}\hat{\mathcal{P}}_{c,d} \ar@{_{(}->}[l] \ar[dll]   \\
  &  & \underset{(c,d)\in J^{\nu-i}}{\coprod}\mathcal{M}(\hat{v}-\hat{v}_{c,d})  \times \mathcal{M}(\hat{v}_{c,d})    & &     }$%
  }
\end{equation*}
Moreover, for each pair $(c,d)\in J^i$,  $\Theta$ restricts to an isomorphism of projectives bundles $\mathcal{P}_{c,d}\overset{\sim}{\to} \mathcal{P}_{c',d'}'$ where $(c',d')\in J^{\nu-i}$ is the pair $(c',d')=\left(\tfrac{(g-3)/4-d-c}{2c+1},d\right)$.
\end{proposition}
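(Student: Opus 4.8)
The plan is to exploit the fact that a derived equivalence automatically transports Bridgeland stability conditions, and hence moduli spaces: for any $\sigma\in\Stab(X)$ the functor $\Theta$ carries $\sigma$-stable objects of class $v$ to $\Theta_*\sigma$-stable objects of class $\vartheta(v)=\hat v$, so $\Theta$ identifies $\mathcal{M}_\sigma(v)$ with $\mathcal{M}_{\Theta_*\sigma}(\hat v)$ with no further input. The whole proposition therefore reduces to two tasks: (a) recognizing, for each $i$, the pushed-forward stability condition $\Theta_*\sigma_{\alpha_i\pm\varepsilon}$ as a member of the distinguished one-parameter family on $\hat X$, so as to read off which chamber it occupies; and (b) matching the destabilizing data across each wall, which is pure lattice arithmetic with $\vartheta$. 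Throughout I will use the values of $\vartheta$ from Lemma \ref{latticeaction} together with the compatibility of the Mukai morphisms under $\Theta$.

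For (a) I would first settle the claim deferred in the proof of Lemma \ref{latticeaction}, namely $\Theta(\Coh^{-1/2}X)=\Coh^{-1/2}\hat X$. This is checked on spanning classes: $\Theta$ sends a skyscraper $\mathcal{O}_x$ to $\mathcal{F}^\vee|_{\{x\}\times\hat X}[1]$, the shift of the dual of a $\mu$-stable rank-two bundle of slope $-1<-1/2$, which lies in $\Coh^{-1/2}\hat X$, and on the $v$-stable objects, whose images have class $\hat v$ and again lie in $\Coh^{-1/2}\hat X$; these determine the heart. Combined with the transformation $Z_{\sigma_\alpha}\circ\vartheta^{-1}$ of central charges, this identifies $\Theta_*\sigma_\alpha$, up to the $\widetilde{\mathrm{GL}}{}^{+}_{2}(\mathbb{R})$-action, with some $\hat\sigma_{\hat\alpha}$. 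Next I would track the walls: since $\vartheta$ interchanges the boundary rays of the movable cone, $f\leftrightarrow\hat\lambda$ and $\lambda\leftrightarrow\hat f$ by Lemma \ref{latticeaction}, the wall $\tilde a_i=\lambda+\mu_i f$ is carried to $\mu_i\hat\lambda+\hat f=\mu_i(\hat\lambda+\tfrac1{\mu_i}\hat f)$, i.e.\ the wall of slope $1/\mu_i$ on $\hat X$. Because $X$ and $\hat X$ share the genus $g$ they have the same slope set, so $\mu\mapsto1/\mu$ is an order-reversing involution of that set; reading off its effect on indices (as in the $g=27$ table) gives $1/\mu_i=\mu_{\nu-i}$. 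Hence the chamber $\mathcal{M}_i=\mathcal{M}_{\sigma_{\alpha_i+\varepsilon}}(v)$, lying between the walls $\mu_{i-1}$ and $\mu_i$, maps to the chamber between $\mu_{\nu-i+1}$ and $\mu_{\nu-i}$ on $\hat X$, namely $\mathcal{M}_{\nu+1-i}$; the reversal of the direction of traversal (as $\alpha\to\infty$ we get $\hat\alpha\to\alpha_0^+$, and conversely) is precisely what reverses the flop arrows in the diagram.

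For (b) and the identification of the centers I would apply the exact functor $\Theta$ to a destabilizing triangle $K\to E\to Q$ with $K\in\mathcal{M}(v_{c,d})$ and $Q\in\mathcal{M}(v-v_{c,d})$ across the wall $\mu_i$, obtaining $\Theta K\to\Theta E\to\Theta Q$ and realizing $\Theta E$ on the opposite side of the corresponding wall $\mu_{\nu-i}$ on $\hat X$; thus $\Theta$ identifies $\mathcal{P}_{c,d}$ with the exceptional locus $\hat{\mathcal{P}}'_{c',d'}$, with the two factors of the base swapped as the diagram records. The pair $(c',d')$ is pinned down by computing $\vartheta(v_{c,d})$ from the matrix of $\vartheta$ on $\{(1,0,0),(0,h,0),(0,0,1)\}$, which one obtains by solving the three relations of Lemma \ref{latticeaction}, and imposing $\vartheta(v_{c,d})=\hat v-\hat v_{c',d'}$. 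Matching the rank gives $2c'+1=\tfrac{g-1-4d}{2(2c+1)}$ and matching the $\hat h$-coefficient gives $1+c'=\tfrac{4c+g+1-4d}{4(2c+1)}$; both force $d'=d$ and $c'=\tfrac{(g-3)/4-d-c}{2c+1}$, while the third coordinate is then determined since $\vartheta$ is an isometry. One also checks $(c',d')\in J^{\nu-i}$, i.e.\ $\mu(\hat v_{c',d'})=1/\mu_i=\mu_{\nu-i}$, which is immediate from the rank identity.

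I expect the main obstacle to be step (a): rigorously locating $\Theta_*\sigma_\alpha$ inside the distinguished component $\Stab^\dag(\hat X)$ and verifying that the image path is exactly the reverse of the family $\{\hat\sigma_{\hat\alpha}\}$ requires care with the $\widetilde{\mathrm{GL}}{}^{+}_{2}(\mathbb{R})$-action and with the large-volume/boundary behavior. Once this is secured, the chamber combinatorics and the determination of $(c',d')$ are routine consequences of the isometry $\vartheta$ and of the Bayer--Macr\`i description of the wall-and-chamber structure already invoked in Section \ref{mukaiflop}.
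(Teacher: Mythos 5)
Your proposal follows essentially the same route as the paper: prove $\Theta(\Coh^{-1/2}X)=\Coh^{-1/2}\hat X$, identify $\Theta_*\sigma_\alpha$ with $\sigma_{\hat\alpha}$ up to the $\widetilde{\Gl}^+(2,\mathbb{R})$-action (the paper computes $\hat\alpha=\tfrac{1}{\alpha(2g-2)}$, which gives the order reversal you detect via the swap $f\leftrightarrow\hat\lambda$, $\lambda\leftrightarrow\hat f$), and then pin down $(c',d')$ by the lattice identity $\vartheta(v_{c,d})=\hat v-\hat v_{c',d'}$ and by applying $\Theta$ to destabilizing triangles. The only point to tighten is the heart identification, which is not determined by images of spanning classes alone: the paper instead shows every skyscraper $\mathcal{O}_{\hat x}=\Theta(V[1])$ is $\sigma_\alpha^{\Theta}$-stable of phase $1$, invokes the classification of geometric stability conditions to get $\Theta(\Coh^{-1/2}X)=\Coh^{\beta}\hat X$, and then computes $\beta=-1/2$ — exactly the care you flagged as the main obstacle.
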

\begin{proof}
Let $\alpha\in (0,\infty)$ and $\sigma_\alpha=(Z_{\alpha,-1/2},\Coh^{-1/2}X)$ be the geometric stability condition from Section  \ref{mukaiflop}. Then $\Theta$ induces an isomorphism of (good) moduli spaces
\begin{equation}\label{sigmaequiations}
    \mathcal{M}_{\sigma_{\alpha}}(w)= \mathcal{M}_{\sigma_{\alpha}^\Theta}(\vartheta(w)),\qquad w\in H^*_{\Alg}(X,\mathbb{Z}),
\end{equation}
where $\sigma_{\alpha}^\Theta\in \Stab(\hat{X})$ is the stability condition
$$ \sigma_{\alpha}^\Theta=(Z_{\alpha,-1/2}\circ{\vartheta^{-1}},\Theta(\Coh^{-1/2} X)).$$
\underline{Claim}: $\Theta(\Coh^{-1/2} X)=\Coh^{-1/2}\hat{X}.$ 

Let $\hat{x}\in \hat{X}$ be a closed point. We show that $\mathcal{O}_{\hat{x}}\in \Theta(\Coh^{-1/2} X)$ is $\sigma_{\alpha}^\Theta$-stable with phase 1.  By construction of $\Theta$, there is a vector bundle $V\in \mathcal{M}(2,-h,(g-1)/2)$ such that $ \mathcal{O}_{\hat{x}}=\Theta(V[1]).$
Since $V$ is slope stable with slope  $$\mu(V)=\frac{(c_1(V),h)}{\Rank Vh^2}=-1/2,$$ we see that $V[1]$ is a  $\sigma_{\alpha}$-stable with phase 1. Indeed, by using the  formula of $Z_{\alpha,-1/2}$: $$Z_{\alpha,-1/2}(w)=(e^{(-1/2+i\alpha)h},w),$$ we obtain that $Z_{\alpha,-1/2}(V[1])=-\alpha^2 h^2\in \mathbb{R}_{<0}$. On the other hand, if $A\hookrightarrow V[1]$ is any stable object in $\Coh^{-1/2}X$ with phase 1, then $A$ is either a zero dimensional sheaf or of the form $F[1]$ where $F$ is a vector bundle of rank $2n$ and slope $\mu(F)=-1/2$. Thus $A$ is either zero or $A=V[1]$. 

Therefore, every skyscraper sheaf $\mathcal{O}_{\hat{x}}$ is $\sigma_{\alpha}^\Theta$-stable with phase 1 and so by the classification of hearts (see, e.g., \cite[Theorem~3.2]{huybrechtsstability}), we obtain that $\Theta(\Coh^{-1/2}X)=\Coh^{\beta}\hat{X}$ for some $\beta\in \mathbb{R}$. On the other hand, for a vector bundle $V\in \mathcal{M}(2,-\hat{h},(g-1)/2)$, there is a point $x\in X$ such that  $V[1] =\Theta(\mathcal{O}_x).$
In particular, we see that $ V[1]\in\Coh^{\beta}\hat{X}$ is $\sigma_{\alpha}^\Theta$-stable with phase 1. Therefore  by the definition of $\Coh^{\beta}\hat{X}$ (see (\ref{definitioncohb})), we conclude that  $\beta=\mu(V)=-1/2$ and so $\Theta(\Coh^{-1/2}X)=\Coh^{-1/2}\hat{X}$. This proves the claim. 

Note that here we showed that every skyscraper sheaf  is stable with phase 1. Since $\varphi_{\alpha}^\Theta$ is a good stability condition on $\Stab \hat{X}$, there is exists a unique $g\in \tilde{\Gl}^+(2,\mathbb{R})$ such that $g\cdot \sigma_{\alpha}^\Theta=\sigma_{\hat{\alpha}}$ for some $\hat{\alpha}\in (0,\infty)$ (see \cite[Proposition~10.3]{stability}). Now   compute  the value of $\hat{\alpha}$ as follows. Let $\hat{x}\in \hat{X}$ be a closed point. Then $Z_{\alpha}\circ\theta^{-1}(\mathcal{O}_{\hat{x}})=-\alpha h^2$. Since $Z_{\hat{\alpha}}(\mathcal{O}_{\hat{x}})=-1$, we see that $g$ satisfies  $g(1,0)=
(1/\alpha^2h^2,0)$ for $(1,0)\in\mathbb{R}^2=\mathbb{C}$. On the other hand, for a stable vector bundle $V\in \mathcal{M}(2,-\hat{h},(g-1)/2)$, we have that $Z_{\alpha}\circ\theta^{-1}(V[1])=Z_{\alpha}(\mathcal{O}_x)=-1$ for some point $x\in X$. Since $Z_{\hat{\alpha}}(V[1])=-\hat{\alpha}^2\hat{h}^2$ and $\hat{h}^2=h^2=2g-2$, we obtain that $\hat{\alpha}=\frac{1}{\alpha (2g-2)}$.

Since $\vartheta(v)=\hat{v}$, and the moduli spaces $\mathcal{M}_{i}$ are ordered by decreasing value of $\alpha$, using (\ref{sigmaequiations}) we conclude the isomorphism  $\mathcal{M}_{i}\simeq \hat{\mathcal{M}}_{\nu+1-i}$ for each $i\in 
 I$.

Using Lemma \ref{latticeaction} we obtain 
\begin{equation}\label{numericalvalues}
    \vartheta(v_{c,d})=\hat{v}-\hat{v}_{c',d'},\qquad i'=(c',d')=\left(\frac{(g-3)/4-d-c}{2c+1}, d\right)
\end{equation}
By  the same analysis for the Mukai vector $v_{c,d}$ instead of $v$, we obtain the isomorphism $\Theta:\mathcal{M}(v_{c,d})\simeq \mathcal{M}(\hat{v}-\hat{v}_{c',d'})$ and $\Theta:\mathcal{M}(v-v_{c,d})\simeq \mathcal{M}(\hat{v}_{c',d'})$. Finally, a more careful analysis for an stability condition $\sigma_\alpha$ in a wall with respect to $v$, reveals that these isomorphisms fit into the diagram (\ref{compatible}). The last statements follows from (\ref{numericalvalues}). Indeed, let $E\in \mathcal{P}_{c,d}$. Then $E$ fits into a non-trivial exact triangle
$$ F_1\to E\to \Psi(F_2),\qquad F_1,F_2\in \mathcal{M}(v_{c,d}).$$
By applying $\Theta$ to this triangle a using that $\hat{\Psi}\circ \Theta= \Theta\circ\Psi$, we obtain that $\Theta(E)$ fits into the triangle 
$$  \hat{\Psi}\circ \Theta \circ \Psi(F_1)\to \Theta(E)\to \Theta\circ \Psi(F_2).$$
By (\ref{numericalvalues}), we have that  $\Theta\circ \Psi(F_1),\Theta\circ \Psi(F_2)\in \mathcal{M}(\hat{v}_{c',d'})$ and so  $\Theta(E)\in \hat{\mathcal{P}}_{c',d'}'$.
\end{proof}

\begin{proof}(Proposition \ref{involutioncor})
   For $0\leq i\leq \nu+1$, we have that $M_{i}$ and $\hat{M}_{\nu+1-i}$ are the Fano-type component of $\Fix(\tau,\mathcal{M}_{i})$ and $\Fix(\hat{\tau},\hat{\mathcal{M}}_{\nu+1-i})$ respectively. Using  the relation $\hat{\Psi}\circ \Theta=\Theta\circ \Psi$, we see that $\Theta$ restricts an isomorphism between the fixed loci of $\tau$ and $\hat{\tau}$. Thus it is enough to see that $\Theta$ sends the component $M_{i}$ to $\hat{M}_{\nu+1-i}$. By birationality and the previous Proposition, it its enough to verify it when  for $i=0$. In this case, it is clear that $M_0\not\simeq \hat{\Omega}_{\nu+1-i}$\footnote{the other fixed component of  $\Fix(\hat{\tau},\hat{\mathcal{M}}_{\nu+1-i}$).} since  $\hat{\Omega}_{\nu+1-i}$ has Picard rank at least  3 and $M_0=\mathbb{P}^g$. The fact that the restriction of $\Theta$ into $M_{i}$  fits into the diagram of the Proposition follows from  (\ref{numericalvalues}) and the commutativity of the diagram:
\begin{equation*}
    \xymatrix{
 \mathcal{M}(v_{c,d}) \ar[rr]^{\hat{\Psi} \circ\Theta} \ar[d]^{\Delta}  & & \mathcal{M}(\hat{v}_{c',d'}) \ar[d]^{(\hat{\Psi}\times \hat{\Psi}) \circ \hat{\Delta}}    \\
\mathcal{M}(v_{c,d}) \times \mathcal{M}(v-v_{c,d})\ar[rr]^{\Theta\times \Theta} & & \mathcal{M}(\hat{v}-\hat{v}_{c',d'}) \times \mathcal{M}(\hat{v}_{c',d'})   }
\end{equation*}
where $\Delta$ is the map 
$$\Delta: \mathcal{M}(v_{c,d})\to \mathcal{M}(v_{c,d}) \times \mathcal{M}(v-v_{c,d}),\qquad T\mapsto (T,\Psi(T)), $$
and similarly, $\hat{\Delta}(T)=(T,\hat{\Psi}(T))$ if $T\in \mathcal{M}(\hat{v}_{c',d'})$.
\end{proof}
Now we focus on proving Proposition \ref{nonbir}. We will use the following  basic fact about lattices and symmetries.

Let $L$ be a lattice and $\varphi:L\to L$  an isometry. If $A\subset L$ is a sub-lattice preserved by $\varphi$, then so is its complement $B=A^\perp$. Thus have a well defined induced action of $\varphi$ in their discriminant lattices:
$$ \varphi:dA \to dA, \qquad  \varphi:dB\to dB,\qquad dA\coloneqq A^*/A,dB\coloneqq B^*/B.$$
If $L$ is unimodular, i.e., $L^*\simeq L$,  then via the embedding $A+B\subset L\simeq L^*$, we obtain the isomorphisms 
$$ A^*/A\simeq L^*/A+B \simeq B^*/B $$
which are clearly compatible with the action of $\varphi$.
\begin{proof}(Proof of Proposition \ref{nonbir})
    Assume that there is an isomorphism $f:X\overset{\sim}{\to} \hat{X}$. We write $L=\tilde{H}^*(\hat{X},\mathbb{Z})$, $A=\tilde{H}_{\Alg}^*(\hat{X},\mathbb{Z})$ and $B=A^\perp $. By definition, $B=T_{\hat{X}}$ is the transcendental lattice of $\hat{X}$. Let $\varphi:L\to L$ be the isometry obtained by pulling back  the Hodge-isometry $\Theta^H:\tilde{H}^*(X,\mathbb{Z})\to \tilde{H}^*(\hat{X},\mathbb{Z})=L$ induced by $\Theta$ along $f$ (see e.g \cite[Corollary~10.7]{huymukai}). Then $A$ is preserved by $\varphi$. By the previous comment, we obtain an isomorphism $A^*/A\simeq B^*/B$ which is compatible with the action of $\varphi$. 

    As a lattice, $A\simeq U\oplus \NS(\hat{X})$ where $U$ denotes the hyperbolic lattice. Since $U^*\simeq U$ and $\NS(\hat{X})^*=\NS(\hat{X})\frac{1}{\hat{h}^2}$ under the intersection pairing, we see that 
    $$ A^*/A\simeq  \left(U+\mathbb{Z}\cdot\tfrac{1}{\hat{h}^2}(0,\hat{h},0)\right)\left/ U+\mathbb{Z}\cdot(0,\hat{h},0)\right., $$
    and so $A^*/A$ is cyclic generated by $U+\tfrac{1}{\hat{h}^2}(0,\hat{h},0)$. By Lemma \ref{latticeaction}, we have that  
    $$ \varphi(U+\tfrac{1}{\hat{h}^2}(0,\hat{h},0))=U+\frac{\left(2-2g,g\hat{h},\tfrac{(1-g)(g-1)}{2}+g-1\right)}{\hat{h}^2}= U+g\cdot\tfrac{1}{\hat{h}^2}(0,\hat{h},0).$$
    That is, $\varphi$ acts by multiplication by $g$ on $A^*/A$. Since $A^*/A\simeq B^*/B$, the same holds for $ B^*/B$. In particular the action of $\varphi$ on $B$ is not trivial which contradicts the fact the only automorphism of  $T_{\hat{X}}$ is the identity (see \cite{Oguiso}). 

    For the second part, note that  if the  pair $(c,d)$ satisfies the numerical condition of the Proposition, then $(c,d)=(c',d')$ where $(c',d')$ is the  pair described at the end of the  Proposition \ref{involutioncor}. Thus we obtain   
    $\hat{\Psi}\circ \Theta:\mathcal{M}(v_{c,d})\simeq \mathcal{M}(\hat{v}_{c',d'})=\mathcal{M}(\hat{v}_{c,d})$ and the statement follows.
\end{proof}

\section{Divisoral contraction for $4\mid g$}\label{sectiondiv}
When $g$ is divisible by 4, the authors in  \cite[Lemma~3.23]{Flapan_2021} described a divisorial  contraction 
\begin{equation}\label{contraction}
    \mathcal{M}_{\eta}\to \overline{\mathcal{M}}
\end{equation}
 which is induced by a stability condition $\overline{\sigma}$ on a wall.   The involution $\tau$ on $\mathcal{M}_\eta$ descends to an involution  $\overline{\tau}:\overline{\mathcal{M}}\to \overline{\mathcal{M}}$, and by \cite[Proposition~5.12]{Flapan_2021},  the restriction of (\ref{contraction}) to the fixed locus  induces two birational morphisms  $\Omega_\eta\to \overline{\Omega}$ and $M_{\eta}\to \overline{M}$. Moreover,   $\overline{\Omega},\overline{M}$ are the irreducible components of $\Fix(\overline{\tau},\overline{\mathcal{M}})$. Denote by  $\Delta$ the divisor contracted by (\ref{contraction}) and let $\overline{\Delta}$ be its image.  By \cite[Lemma~3.24]{Flapan_2021}, each fiber of (\ref{contraction}) over a point $F\in \overline{\Delta}$  is identified with a  Grassmannian space $G_F$, which is preserved by $\tau$.  In this section, we determine  explicitly the fixed locus of  this action on $G_F$,  answering a question posed by Macr\'i.
\begin{proposition}\label{fiberdesc}
    If $F\in  \overline{\Omega}$, then $\Fix(\tau,G_F)$ is the orthogonal Grassmannian $\OGr(k,2k)$ for some $k$ (and so it is the disjoint union of its two spinor varieties).  If  $F\in  \overline{M}$, then $\Fix(\tau,G_F)$ is the Lagrangian Grassmannian $\LGr(k,2k)$.
\end{proposition}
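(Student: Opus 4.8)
The plan is to reduce the statement to a linear-algebra computation on $G_F$, by identifying the restriction $\tau|_{G_F}$ with the operation ``orthogonal complement'' for a nondegenerate bilinear form $b$ whose symmetry type distinguishes the two components $\overline{M}$ and $\overline{\Omega}$. First I would recall from \cite[Lemma~3.23, Lemma~3.24]{Flapan_2021} the precise description of the fiber: a point $F\in\overline{\Delta}$ is $\overline{\sigma}$-polystable, its Jordan--Hölder factors are a spherical object $S$ and a $\overline{\sigma}$-stable object $T$, the functor $\Psi$ permutes these factors (so that $\Psi(S)\simeq S$ and $\Psi(T)\simeq T$ up to the relevant shift, since $F$ is $\tau$-fixed), and $G_F\simeq \mathrm{Gr}(k,W)$, where $W=\Ext^1(T,S)$ is the $\Ext$ group governing the stable modifications of $F$ lying in $\mathcal{M}_\eta$. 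In particular $\dim W=2k$, so that $G_F=\mathrm{Gr}(k,2k)$.

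Next I would describe the action of $\tau$ on $G_F$. Since $\tau$ is induced by $\Psi$ and $F$ is $\tau$-fixed, there is an isomorphism $t\colon F\xrightarrow{\sim}\Psi(F)$; combined with the isomorphisms $\Psi(S)\simeq S$, $\Psi(T)\simeq T$ and Serre duality on the K3 surface $X$ (where $\omega_X\simeq\mathcal{O}_X$), the contravariant functor $\Psi$ produces a canonical isomorphism $W=\Ext^1(T,S)\xrightarrow{\sim}\Ext^1(S,T)\simeq W^*$, i.e.\ a nondegenerate bilinear form $b$ on $W$. Under the identification $G_F=\mathrm{Gr}(k,W)$, the involution $\tau$ is then $U\mapsto U^{\perp_b}$, because $E_{\tau(U)}\simeq\Psi(E_U)$. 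Hence $\Fix(\tau,G_F)=\{\,U\in\mathrm{Gr}(k,2k): U=U^{\perp_b}\,\}$ is exactly the variety of maximal $b$-isotropic subspaces, which is $\OGr(k,2k)$ when $b$ is symmetric (and then has two connected components, its two spinor varieties) and $\LGr(k,2k)$ when $b$ is antisymmetric.

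It then remains to compute the symmetry type of $b$, which is the heart of the argument and where the machinery of Section~\ref{section4} enters. The symmetry of $b$ is the product of two signs: the intrinsic sign of the Yoneda/Serre pairing $\Ext^1(S,T)\times\Ext^1(T,S)\to\Ext^2(S,S)\xrightarrow{\mathrm{tr}}\mathbb{C}$, which on a surface contributes $(-1)^{i(2-i)}=-1$ for classes in degree $i=1$ by the sign conventions of Lemma~\ref{lemma4.1}; and the sign by which $\Psi$ acts on the distinguished isomorphism $F\simeq\Psi(F)$, which by Lemma~\ref{lemmas4} and the eigenspace relation (\ref{eingenrelations1}) is precisely the $\Psi$-eigenvalue recorded in Proposition~\ref{desflip}, namely $+1$ when $F\in\overline{M}$ and $-1$ when $F\in\overline{\Omega}$. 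Tracking these signs through the identification $W\simeq W^*$ gives: for $F\in\overline{M}$ the form $b$ is antisymmetric, so $\Fix(\tau,G_F)=\LGr(k,2k)$; for $F\in\overline{\Omega}$ the form $b$ is symmetric, so $\Fix(\tau,G_F)=\OGr(k,2k)$, a disjoint union of its two spinor varieties.

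The main obstacle will be the sign bookkeeping in this last step. One must pin down the exact $\Ext$ degree in which $W$ lives at this particular Brill--Noether wall (to fix the intrinsic Serre sign), correctly incorporate the shift $[1]$ in $\Psi(-)=\SheafHom(-,\Lambda^*[1])$, and carefully match the $\pm$-sign of the $\Psi$-action on the factors $S$ and $T$ with the sign of $\Psi$ on $\Hom(F,\Psi(F))$ that separates $\overline{M}$ from $\overline{\Omega}$ in Proposition~\ref{desflip}. Once the form $b$ and its symmetry type are established, the identification of the fixed locus with $\OGr(k,2k)$ or $\LGr(k,2k)$ is purely formal.
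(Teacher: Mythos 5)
Your overall strategy coincides with the paper's: identify $G_F$ with a Grassmannian $\mathrm{Gr}(k,2k)$ of $k$-dimensional subspaces of a $2k$-dimensional Ext-space, show that $\tau$ acts on it by passing to the annihilator with respect to a nondegenerate pairing (this is Lemma \ref{actionspace}), and determine the symmetry type of that pairing by combining an intrinsic $-1$ coming from the degree-one pairing on a surface (Lemma \ref{actiononA}) with the $\pm 1$ eigenvalue separating $\overline{M}$ from $\overline{\Omega}$ recorded in Proposition \ref{desflip}. The reduction of the Proposition to the symmetry type of the form, and the final identification of the fixed locus with $\LGr(k,2k)$ or $\OGr(k,2k)$, are exactly as in the paper.

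There is, however, a concrete gap in your setup for the ``sign bookkeeping'' that you rightly flag as the heart of the matter. The stable factor $T=F_k$ is \emph{not} $\Psi$-self-dual: the destabilizing triangle for $\Psi(E)$ has quotient $\Phi(F_k)$ with $\Phi=\ST_A\circ\Psi$ (see (\ref{trianglepsi})), so the relevant pairing is $\Hom(F_k,A[1])\times\Hom(\Phi(F_k),A[1])\to\Hom(\Phi(A[1]),A[1])\simeq\mathbb{C}$ as in (\ref{bil}), and identifying the two factors requires a choice of isomorphism $s:\Phi(F_k)\to F_k$, not one induced by $\Psi$ alone. Consequently the sign you need is that of the $\Phi$-action on $s$, whereas Proposition \ref{desflip} only records the sign of the $\Psi$-action on $t:E\to\Psi(E)$ for objects $E$ in the fiber over $F_k$; these live on different objects and under different functors. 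Bridging them is the actual technical content of the paper's proof: one must construct a natural isomorphism $\zeta:\Id\to\Phi^2$ compatible with $\eta:\Id\to\Psi^2$ via the dual spherical twist (diagram (\ref{zetadef})), and then prove Lemma \ref{lemmadiagram} --- an octahedral-axiom diagram chase --- showing that $t^{\Psi}=\pm t$ if and only if $s^{\Phi}=\pm s$. Without this step your two signs do not obviously multiply to give the symmetry type of $b$; with it, the argument closes exactly as you predict.
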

This provides another proof of the main result of \cite[Section~5]{Flapan_2021} which states that the irreducible components   $\overline{\Omega}$  and $\overline{M}$  are disjoint. It is also worth noting that this implies that  $M_\nu\to \overline{M}$ is a divisorial contraction. Indeed, $\Delta\cap M_\eta$ is a non-empty divisor since the restriction of the line bundle $\mathcal{O}(\Delta)$ to $M_\eta$ is non-trivial. By using Proposition \ref{fiberdesc} along with a straightforward dimension computation, we conclude that  the generic fiber of  $\Delta\cap M_\eta\to \overline{\Delta}\cap \overline{M}$ is a quadric in $\mathbb{P}^4$.

To prove Proposition \ref{fiberdesc}, we begin by recalling the description of the divisorial contraction  (\ref{contraction}). Its exceptional divisor  $\Delta$ corresponds to the set of complexes $E\in \mathcal{M}_{\eta}$ such that $\Hom (A,E)\neq 0$, where $A$ denotes the unique stable spherical vector bundle with Mukai vector  $v(A)=(2,-h,g/4)$. The divisor $\Delta$ carries the natural stratification $\{\Delta_k\}_{k\geq 1}$ given by
$$ \Delta_{k}=\{E\in \mathcal{M}_\eta: \Dim \Hom(A,E)=k \}.$$
By \cite[Lemma~3.23]{Flapan_2021}, each $E\in \Delta_k$ has a $\overline{\sigma}$-destabilizing sequence  
\begin{equation}\label{triangle2}
    \xymatrix{
    A\otimes \Hom(A,E)\ar[r]^-{ev}  & E \ar[r]^a & F_k  \ar[r]^-{b} &    A\otimes \Hom(A,E)[1] },
\end{equation}
where $F_k$ is  a $\overline{\sigma}$-stable object with Mukai vector  $b_k=v-k\cdot v(A).$ For such an object $E$, we can embed the vector space $\Hom(A,E)^\vee$ into $\Hom(F_k,A[1])$   by applying the functor $\RHom(-,A)$ to (\ref{triangle2}), and using the fact that $\Hom(E,A)=0$. Conversely, for any vector subspace $W\subset \Hom(F_k,A[1])$ of dimension $k$, the  element $E$ defined (up to isomorphism) by the  exact triangle:
$$ \xymatrix{ F_k[-1] \ar[r]^-{co.ev} & A\otimes W^\vee \ar[r] & E  \ar[r] & F_k &
} $$  
lies in $\Delta_k|_{F_k}$ and satisfies $\Hom(A,E)=W^\vee$. This allows us to identify  $\Delta_k$  with a  Grassmannian bundle over $\mathcal{M}_{\overline{\sigma}}^{st}(b_k)$ whose fiber at $F_k$ is the Grassmannian space $G_{F_k}\coloneqq Gr(k,2k)$.  Each stratum $\Delta_k$ is preserved under $\Psi$. More precisely,  following the notation of (\ref{triangle2}), the $\overline{\sigma}$-destabilizing sequence for $\Psi(E)$ is given by
\begin{equation}\label{trianglepsi}
 \xymatrix{
    A\otimes \Hom(A,\Psi(E))\ar[r]^-{ev}  &  \Psi(E) \ar[r]^{a'} & \Phi(F_k)  \ar[r]^-{b'} &  A\otimes \Hom(A,\Psi(E))[1] }
\end{equation}
where $\Phi=\ST_A\circ \Psi$ (see \cite[Lemma~3.16]{Flapan_2021}). The action of  $\Psi$ on $\Delta_k$, in terms of the Grassmannian bundle description of $\Delta_k$, can be described as follows. Consider the bilinear map:
\begin{equation}\label{bil}
     \langle-,-\rangle:\Hom(F_k,A[1])\times \Hom(\Phi(F_k),A[1])\to \mathbb{C}
\end{equation}
defined by $$(\alpha,\beta)\mapsto \beta\circ \Phi(\alpha)\in \Hom(\Phi(A[1]),A[1] )\simeq \Ext^2(A,A),$$ followed by the trace morphism $\Ext^2(A,A)\to H^2(X,\mathcal{O}_X)\simeq \mathbb{C}$. We have the following result.
\begin{lemma}\label{actionspace}
   The bilinear map  (\ref{bil}) is non-degenerated. Moreover, for a  $k$-dimensional vector subspace $W\in G_{F_k} $ with associated complex $E\in \Delta_k$, the subspace 
    $$ W^\Psi:= \{\beta\in \Hom(\Phi(F_k),A[1]):\langle \alpha,\beta\rangle =0,\quad \forall \alpha\in W\}\in G_{\Phi(F_k)}$$
    is the associated vector subspace for $\Psi(E)$.
    \begin{proof}
    The first statement follows from Serre duality and the fact that $\Phi$ is an isomorphism on arrows. For  the second statement, it is enough to show that the restriction of (\ref{bil}) to $\Hom(A\otimes\Hom(A,E),A)\times \Hom(A\otimes\Hom(A,\Psi(E),A) $ is zero, where the inclusion  $\Hom(A\otimes\Hom(A,E),A)\hookrightarrow  \Hom(F_k,A[1]) $ and $\Hom(A\otimes\Hom(A,\Psi(E)),A)\hookrightarrow \Hom(\Phi(F_k),A[1])$ are obtained by applying $\RHom(-,A)$ to (\ref{triangle2}) and (\ref{trianglepsi}) respectively. Consider a pair of morphisms $$(\alpha',\beta')\in \Hom(A\otimes\Hom(A,E),A)\times \Hom(A\otimes\Hom(A,\Psi(E)),A).$$ Then its corresponding pairing is the composition $\beta'\circ b'\circ \Phi(b)\circ \Phi(\alpha')$. By the construction of the triangle  (\ref{trianglepsi})  (see the proof of \cite[Lemma~3.16]{Flapan_2021} or the proof of Lemma \ref{lemmadiagram} below), the morphism $b'$ factorizes via the composition 
    \begin{equation*}
        \xymatrixcolsep{3pc} \xymatrix{
        \Phi(F_k)\ar[r]^-{b'} \ar[d]^{\Phi(a)}  & A\otimes \Hom(A,\Psi(E))[1] \\ 
          \Phi(E) \ar[r] & A\otimes \RHom(A,\Psi(E))[1] \ar[u]
        }.
    \end{equation*}
    Thus $\beta'\circ b'\circ \Phi(b)\circ \Phi(\alpha')=(\ldots)\circ \Phi(a)\circ \Phi(b)\circ \Phi(\alpha') $ which is clearly zero. 
    \end{proof}
\end{lemma}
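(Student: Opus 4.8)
The plan is to treat the two assertions separately. Throughout I write $V_1=\Hom(F_k,A[1])$ and $V_2=\Hom(\Phi(F_k),A[1])$; since $G_{F_k}=Gr(k,2k)$ is the Grassmannian of $k$-planes in $V_1$ we have $\dim V_1=2k$, and I recall that $\Phi=\ST_A\circ\Psi$ is a contravariant equivalence, so it induces isomorphisms on all $\Hom$-groups. The non-degeneracy I would deduce from Serre duality once the action of $\Phi$ on $A$ is pinned down, while the second assertion I would reduce to a vanishing on the two ``evaluation'' subspaces.

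For the non-degeneracy, the first step is to identify $\Phi(A[1])$. A Chern-character computation gives $v(A^\vee\otimes\Lambda^*)=v(A)$, so by uniqueness of the stable object in this class $A^\vee\otimes\Lambda^*\simeq A$, whence $\Psi(A)=\SheafHom(A,\Lambda^*[1])=(A^\vee\otimes\Lambda^*)[1]\simeq A[1]$. Since $A$ is spherical, $\ST_A(A)\simeq A[1-\dim X]=A[-1]$, and therefore $\Phi(A)=\ST_A(A[1])\simeq A$; as $\Phi$ is contravariant this gives $\Phi(A[1])\simeq A[-1]$. The second step is to rewrite the pairing: applying $\Phi$ identifies $V_1$ with $\Hom(\Phi(A[1]),\Phi(F_k))=\Hom(A[-1],\Phi(F_k))=\Hom(A,\Phi(F_k)[1])$ and identifies the target $\Hom(\Phi(A[1]),A[1])=\Hom(A[-1],A[1])$ with $\Ext^2(A,A)$. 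Under these identifications (\ref{bil}) becomes the composition pairing
$$\Hom(A,\Phi(F_k)[1])\times\Hom(\Phi(F_k)[1],A[2])\longrightarrow\Ext^2(A,A)\xrightarrow{\ \mathrm{tr}\ }\mathbb{C},$$
which is Serre duality on the K3 surface $X$ and hence perfect. Non-degeneracy of (\ref{bil}) follows, and in particular $\dim V_2=\dim V_1=2k$.

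For the second assertion, I would first identify the two subspaces. Applying $\RHom(-,A)$ to (\ref{triangle2}) and using $\Hom(E,A)=0$, the connecting map realizes $W=\Hom(A,E)^\vee$ as the subspace of those $\alpha\in V_1$ factoring through $b$, that is $\alpha=\psi[1]\circ b$ with $\psi\in\Hom(A\otimes\Hom(A,E),A)$; likewise, applying $\RHom(-,A)$ to (\ref{trianglepsi}) realizes the subspace $W'$ associated to $\Psi(E)$ as the morphisms $\beta=\psi'[1]\circ b'$. Because $\Delta_k$ is $\Psi$-stable, $\Psi(E)\in\Delta_k$, so $\dim W'=\dim\Hom(A,\Psi(E))=k$. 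I would then compute the pairing of such $\alpha,\beta$: by contravariance $\langle\alpha,\beta\rangle=\mathrm{tr}(\psi'[1]\circ b'\circ\Phi(b)\circ\Phi(\psi[1]))$, and substituting the factorization of $b'$ through $\Phi(a)$ (established in the proof of \cite[Lemma~3.16]{Flapan_2021}, cf. Lemma \ref{lemmadiagram}) turns the middle into $(\cdots)\circ\Phi(a)\circ\Phi(b)\circ(\cdots)$; since $\Phi(a)\circ\Phi(b)=\Phi(b\circ a)=0$ (consecutive maps of the triangle (\ref{triangle2})), the pairing vanishes. Thus the restriction of (\ref{bil}) to $W\times W'$ is zero, i.e. $W'\subseteq W^\Psi$.

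Finally, non-degeneracy of (\ref{bil}) together with $\dim V_1=\dim V_2=2k$ shows that the orthogonal $W^\Psi$ of the $k$-dimensional $W$ has dimension $2k-k=k=\dim W'$; hence the inclusion $W'\subseteq W^\Psi$ is an equality, which is the assertion. The one genuinely delicate point is the factorization of $b'$ through $\Phi(a)$: it encodes the compatibility between the two destabilizing triangles (\ref{triangle2}) and (\ref{trianglepsi}) under the functor $\Phi=\ST_A\circ\Psi$, and is where the explicit construction of (\ref{trianglepsi}) must be invoked; everything else is formal, resting only on Serre duality, the equivalence-invariance of $\Hom$, and the vanishing of consecutive triangle maps.
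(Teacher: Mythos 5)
Your proof is correct and follows essentially the same route as the paper: non-degeneracy via Serre duality combined with $\Phi$ being an equivalence, vanishing of the pairing on the two evaluation subspaces via the factorization of $b'$ through $\Phi(a)$ together with $\Phi(a)\circ\Phi(b)=\Phi(b\circ a)=0$, and then the dimension count to upgrade $W'\subseteq W^\Psi$ to equality. The only difference is that you make explicit two points the paper leaves implicit — the identification $\Phi(A[1])\simeq A[-1]$ (via $\Psi(A)\simeq A[1]$ and the spherical twist) behind the Serre-duality step, and the final counting argument — which is a faithful expansion rather than a different argument.
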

Now  assume that  $F_k\in \mathcal{M}_{\overline{\sigma}}^{st}(b_k)$ is fixed by $\overline{\tau}$ and choose an  isomorphism $s:\Phi(F_k)\overset{\sim}{\to}F_k$. Pulling back the space $\Hom(\Phi(F_k),A[1])$ in (\ref{bil}) along $s$ we obtain  a quadratic form 
\begin{equation}\label{bil1}
     \langle-,-\rangle:\Hom(F_k,A[1])\times \Hom(F_k,A[1])\to \mathbb{C}.
\end{equation}
By the previous Lemma, we see that Proposition \ref{fiberdesc} is equivalent to the following statement.
\begin{proposition}\label{phiaction}
    If $F_k\in  \overline{\Omega}$, then   (\ref{bil1}) symmetric. If  $F_k\in  \overline{M}$, then (\ref{bil1}) is alternating. 
\end{proposition}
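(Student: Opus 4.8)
The plan is to deduce Proposition \ref{phiaction} from a two–sign decomposition of the symmetry of the pairing (\ref{bil1}), in the spirit of the sign analysis of Section \ref{section4}. Writing the pulled–back form as $\langle\alpha,\beta\rangle_s=\operatorname{tr}(\beta\circ s\circ\Phi(\alpha))$ for $\alpha,\beta\in\Hom(F_k,A[1])$, I would compare it with $\langle\beta,\alpha\rangle_s=\operatorname{tr}(\alpha\circ s\circ\Phi(\beta))$ by applying the anti-autoequivalence $\Phi=\ST_A\circ\Psi$ to the first composite. Contravariance gives $\Phi(\beta\circ s\circ\Phi(\alpha))=\Phi^2(\alpha)\circ\Phi(s)\circ\Phi(\beta)$; using a functorial isomorphism $\epsilon\colon\Phi^2\Rightarrow\mathrm{id}$ (which exists because $\overline\tau$ is an involution) and its naturality in the variable $\alpha$, this rearranges to $\alpha\circ(\epsilon_{F_k}\circ\Phi(s))\circ\Phi(\beta)$. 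Since $F_k$ is $\overline\sigma$-stable and $\overline\tau$-fixed, $\Dim\Hom(\Phi(F_k),F_k)=1$, so $\epsilon_{F_k}\circ\Phi(s)=c\,s$ for a scalar $c$, and applying $\Phi$ once more forces $c=\pm1$. This yields the identity $\langle\beta,\alpha\rangle_s=\epsilon_0\,c\,\langle\alpha,\beta\rangle_s$, where $\epsilon_0=\pm1$ is the intrinsic scalar by which $g\mapsto\epsilon_{A[1]}\circ\Phi(g)$ acts on the one–dimensional space $\Hom(\Phi(A[1]),A[1])\simeq\Ext^2(A,A)$ (recall $A$ is spherical), measured against the trace. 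Thus the form is symmetric exactly when $\epsilon_0c=+1$ and alternating exactly when $\epsilon_0c=-1$; by Lemma \ref{actionspace} it is non-degenerate in either case, so the fixed locus $\{W=W^\Psi\}$ is $\OGr(k,2k)$ in the symmetric case and $\LGr(k,2k)$ in the alternating one, which is precisely the reduction feeding Proposition \ref{fiberdesc}.

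Next I would compute the intrinsic sign $\epsilon_0$. It depends only on $A$, on $\Phi$, on the shift and on Serre duality for the K3, and not on $F_k$. The plan is to unwind $\Phi(A[1])$ from $\Psi(-)=\SheafHom(-,\Lambda^*[1])$ together with the definition of $\ST_A$, and to track the sign coming from (i) the Koszul rule when transposing two degree–one morphisms, exactly as in the maps (\ref{morphim1})–(\ref{dobledual}), and (ii) the way the Serre–duality trace on $\Ext^2(A,A)\simeq H^2(X,\mathcal{O}_X)$ transforms under the contravariant $\Phi$. Since the relevant pairing on a K3 surface involves odd-degree classes, I expect $\epsilon_0=-1$, but this is the step that must be carried out explicitly.

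It then remains to match $c$ to the two components. The scalar $c$ records whether the fixed-point isomorphism $s\colon\Phi(F_k)\to F_k$ is $\Phi$-symmetric ($c=+1$) or $\Phi$-antisymmetric ($c=-1$), i.e.\ it is exactly the eigenvalue datum of $\overline\tau$, and $\overline M$ and $\overline\Omega$ are distinguished by opposite values of $c$. To orient the bijection between sign and name, I would use that the contractions $M_\eta\to\overline M$ and $\Omega_\eta\to\overline\Omega$ are $\Psi$-equivariant, so $\overline M$ inherits from Proposition \ref{desflip} the characterization of the Fano-type component, namely that $\Psi$ acts as $+1$ on $\Hom(E,\Psi(E))$; translating this eigenvalue through $\ST_A$ and the triangle (\ref{triangle2}) via Lemma \ref{lemmas4} and the eigenspace relation (\ref{eingenrelations1}) pins down $c$ on each component. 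Together with $\epsilon_0$ this gives $\epsilon_0c=+1$ on $\overline\Omega$ (symmetric) and $\epsilon_0c=-1$ on $\overline M$ (alternating), as asserted.

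The hard part will be the precise sign bookkeeping: computing $\epsilon_0$ with the correct Koszul and Serre-duality signs for the shifted contravariant functor $\Phi$, and correctly orienting the correspondence between the value of $c$ and the two named components $\overline M,\overline\Omega$, rather than merely proving that the two symmetry types differ. As a safeguard I would cross-check the final matching in the smallest stratum (for instance $k=1$, where $G_{F_k}=\mathbb{P}^1$ and the dichotomy between an isotropic pair of points and a single Lagrangian point is visible by hand), and I would lean on the non-degeneracy from Lemma \ref{actionspace} to guarantee that the form is genuinely of exactly one of the two types.
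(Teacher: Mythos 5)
Your overall strategy coincides with the paper's: the symmetry defect of (\ref{bil1}) is factored as the product of an intrinsic sign on the one-dimensional space $\Hom(\Phi(A[1]),A[1])$ (which is indeed $-1$; the paper's Lemma \ref{actiononA} obtains it from the fact that every map $\Lambda^*[1]\to A[1]\otimes A[2]$ factors through $(\Lambda^2A)[3]$) with the sign $c$ recording whether $s^\Phi=\pm s$, and $c$ is then matched to the two components by transferring the known eigenvalue of $\Psi$ on $\Hom(E,\Psi(E))$ from Proposition \ref{desflip}. Your reduction to isotropic subspaces via Lemma \ref{actionspace}, and the $k=1$ sanity check, are likewise consistent with the paper's Proposition \ref{fiberdesc}.

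However, the two steps you defer are exactly where the substance of the paper's proof lies, and your stated justifications would not survive as written. First, the isomorphism $\epsilon:\Phi^2\Rightarrow\mathrm{id}$ does not ``exist because $\overline{\tau}$ is an involution'': an involution of the moduli space only gives $\Phi^2(F)\simeq F$ object by object on stable objects, not a natural transformation of functors. The paper constructs $\zeta:\Id\to\Phi^2$ from the relation between $\ST_A$ and the dual twist $\ST'_{\Psi(A)}$ together with Grothendieck--Verdier duality, and --- crucially --- normalizes it against $\eta:\Id\to\Psi^2$ via the square (\ref{zetadef}); without that compatibility only the product of the two signs is intrinsically defined, and the individual sign $c$ (the one you must pin to a named component) cannot be computed. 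Second, the transfer of the eigenvalue from $t:E\to\Psi(E)$ to $s:F_k\to\Phi(F_k)$ is not an application of Lemma \ref{lemmas4}: that lemma concerns the functor $\Psi_R$ and triangles of the form $\Psi_R(A)\to E\to A$, whereas here the functor is $\ST_A\circ\Psi$ and the destabilizing triangle is $A\otimes\Hom(A,E)\to E\to F_k$. The paper needs a separate and fairly delicate argument (Lemma \ref{lemmadiagram}), using the octahedral axiom and both compatibility triangles in (\ref{diagrams}), to prove the identity $a'\circ t^\Psi=s^\Phi\circ a$ that effects the transfer. So your outline identifies the correct ingredients and the correct final matching, but the existence-and-normalization of $\zeta$ and the $t\mapsto s$ transfer are genuine missing proofs rather than residual bookkeeping.
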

Indeed, by assuming Proposition \ref{phiaction}, we can prove Proposition \ref{fiberdesc} as follows.
\begin{proof}(Proposition \ref{fiberdesc}). Let $F=F_k\in \overline{\Omega}$. By Lemma \ref{actionspace}, we know that $\Fix(\tau,G_F)$ corresponds to set of vector spaces $W\in G_k$ such that  $\langle W,W\rangle=0$ with respect to the quadratic form (\ref{bil1}). By Proposition (\ref{phiaction}), this quadratic form is symmetric because $F\in \overline{\Omega}$ and and the description of $\Fix(\tau,G_F)$ follows. The proof of the description of $\Fix(\tau,G_F)$ when $F\in \overline{M}$ is analogous. 
\end{proof}
The proof of Proposition \ref{phiaction} occupies the rest of the section. Our approach is based on studying a  natural action of  $\Phi$ on $\Hom(\Phi(F),F)$. This action is defined  using a natural isomorphism of functors $\zeta:\Id\to \Phi^2$ which satisfies certain conditions. To this end, we will briefly recall the notion of a spherical twist and its dual. We refer to \cite{Meachan2016ANO} for details and further references.

First we need some notation. Let $p,q:X\times X\to X$ be the projection to the first and second coordinates respectively.   For an object $\mathcal{P}\in D^b(X\times X)$, we denote by   $\Phi_\mathcal{P}(-)=p_*(\mathcal{P}\otimes q^*(-))$ its  associated Fourier-Mukai transform. If $Q\in D^b(X\times X)$ is another object, we write $\mathcal{P}\ast\mathcal{Q}=\pi_{13*}(\pi_{12}^*\mathcal{P}\otimes \pi_{23}^*\mathcal{Q})$. Thus the functor $\Phi_{\mathcal{Q}}\circ \Phi_{\mathcal{P}}$ is isomorphic to the Fourier-Mukai transform  $\Phi_{\mathcal{P}\ast\mathcal{Q}}$.  Similarly,   for a natural transformation of functors  $\theta:F_1\to F_2$  in $D^b(X)$ and    covariant (contravariant) equivalence  $G:D^b(X)\to D^b(X)$, we will use the notation  $\theta \ast G:F_1\circ G \to F_2\circ G$ and $G\ast \theta: G\circ F_1\to G\circ F_2$ (resp.,  $G\ast \theta: G\circ F_2\to G\circ F_1$ )  for respective induced  natural transformations.

  Denote by $\Delta:X\to X\times X$ the diagonal map. The spherical twist $\ST_A:D^b(X)\to D^b(X)$ is the Fourier-Mukai transform whose kernel $\mathcal{P}_A$ is defined (up to isomorphism) by the exact triangle  
 \begin{equation*}
   \xymatrix{
    p^*A \otimes  q^*A^\vee \ar[r]^-{\varepsilon} & \Delta_*\mathcal{O}_X \ar[r]^-{\alpha} &  \mathcal{P}_{A}  \ar[r]^-{\beta} &  p^*A \otimes  q^*A^\vee[1].
   }
 \end{equation*}
Here $\varepsilon: p^*A \otimes  q^*A^\vee \to \Delta_*\mathcal{O}_{X} $ is the morphism obtained via the adjuntion map $\Delta^* \dashv \Delta_* $ applied to the  trace map $tr:A\otimes A^\vee \to \mathcal{O}_X$. By abuse of notation, we will use the same letters to label the induced natural transformations. Thus the previous triangle induces the following sequence of natural transformations 
\begin{equation}\label{trand}
    \xymatrix{ \Phi_{p^*A \otimes  q^*A^\vee} \ar[r]^-{\varepsilon}  & \Id \ar[r]^-{\alpha} & \ST_{A} \ar[r]^-{\beta}  &  \Phi_{p^*A \otimes  q^*A^\vee[1]}.}
\end{equation}
 Similarly, we have the dual spherical twist functor $\ST_{\Psi(A)}':D^b(X)\to D^b(X)$ associated to $\Psi(A)$ which is the Fourier-Mukai transform   whose kernel $\mathcal{P}'_{\Psi(A)}\in D^b(X\times X)$ is defined (again, up to isomorphism) by the exact triangle  
\begin{equation}\label{trianglest2}
    \xymatrix{
   \mathcal{P}'_{\Psi(A)} \ar[r]^-{\delta} & \Delta_*\mathcal{O}_X \ar[r]^-{\theta} & p^*\Psi(A) \otimes  q^*\Psi(A)^\vee[2]  \ar[r]^-{\gamma}&  \mathcal{P}'_{\Psi(A)} [1].  
   }
\end{equation}
Here  $\theta:  \Delta_*\mathcal{O}_{X}\to p^*\Psi(A)\otimes q^*{\Psi(A)}^\vee[2]$  is the morphism obtained via the adjuntion map  $\Delta_* \dashv \Delta^!$  applied to the dual of the trace map $(tr)^*:\mathcal{O}_X\to \Psi(A)\otimes \Psi(A)^\vee $. As before, we obtain a sequence of natural transformation 
\begin{equation*}
  \xymatrix{
   \ST_{\Psi(A)}' \ar[r]^-{\delta} & \Id \ar[r]^-{\theta} & \Phi_{p^*\Psi(A) \otimes  q^*\Psi(A)^\vee[2]}  \ar[r]^-{\gamma}&  \ST_{\Psi(A)}' [1]  
  }.
\end{equation*}
Write $\Phi'=\Psi\circ \ST_{\Psi(A)}'$. We have the following Lemma
\begin{lemma}
    There are  isomorphisms of functors $\varphi:\Phi\to \Phi',\phi:\ST_{\Psi(A)}'\circ \ST_A\to \Id $ which fit into the commutative diagrams of functors
\begin{equation}\label{diagrams}
    \xymatrixcolsep{4pc}\xymatrix{ 
    \Psi \ar[r]^{ \alpha\ast\Psi} \ar[dr]_-{\Psi\ast \delta } &   \Phi \ar[d]^{\varphi}  \\
     &   \Phi'
    } \qquad \text{and} \qquad \xymatrixcolsep{4pc}\xymatrix{   \ST_{\Psi(A)}'\circ \ST_A \ar[r]^-{\phi}   \ar[dr]_-{\delta\ast \ST_A } &   \Id \ar[d]^{\alpha}  \\  & \ST_A  
    }.
\end{equation}
\end{lemma}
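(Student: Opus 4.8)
The plan is to construct both $\varphi$ and $\phi$ by comparing the defining triangles (\ref{trand}) and (\ref{trianglest2}) of $\ST_A$ and $\ST_{\Psi(A)}'$, the only genuinely geometric input being the self-duality of the spherical object $A$. First I would record that $\Psi(A)\simeq A[1]$. A direct Chern-character computation from $v(A)=(2,-h,g/4)$ gives $v(\Psi(A))=v(A^\vee\otimes\Lambda^{-1}[1])=-v(A)$; since $\Psi$ preserves stability and sphericity and the stable spherical bundle with a given primitive spherical Mukai vector is unique up to shift, this yields a fixed isomorphism $\Psi(A)\simeq A[1]$, equivalently $A^\vee\otimes\Lambda^{-1}\simeq A$. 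This self-duality is used only in the construction of $\phi$.

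To construct $\varphi$ I would present both $\Phi$ and $\Phi'$ as cones of an evaluation map into $\Psi$. Whiskering (\ref{trand}) on the right by $\Psi$ exhibits $\Phi=\ST_A\circ\Psi$ as the cone of $\varepsilon\ast\Psi$, whose value at $E$ is the evaluation $A\otimes\RHom(A,\Psi(E))\to\Psi(E)$. Applying the contravariant functor $\Psi$ to (\ref{trianglest2}) exhibits $\Phi'=\Psi\circ\ST_{\Psi(A)}'$ as the cone of $\Psi\ast\theta$. Using $\Psi^2\simeq\Id$ (the double-dual isomorphism of Section \ref{section4}), Serre duality $\RHom(\Psi(A),E)^\vee\simeq\RHom(E,\Psi(A))[2]$, and the tautological symmetry $\RHom(A,\Psi(E))\simeq\RHom(A\otimes E,\Lambda^*[1])\simeq\RHom(E,\Psi(A))$, I would identify the source functor of $\Psi\ast\theta$ with that of $\varepsilon\ast\Psi$. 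The crucial point is that this identification intertwines $\varepsilon\ast\Psi$ with $\Psi\ast\theta$, i.e.\ that the $\Psi$-dual of the co-evaluation $\theta$ is the evaluation $\varepsilon$; the two triangles then form a morphism of triangles which is the identity on $\Psi$ and an isomorphism on the source, and the induced isomorphism on the third vertices is $\varphi$. The identity $\varphi\circ(\alpha\ast\Psi)=\Psi\ast\delta$ is exactly the commuting square of this morphism of triangles. Note that this step does not use $\Psi(A)\simeq A[1]$.

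To construct $\phi$ I would use the first step to identify $\ST_{\Psi(A)}'$ with $\ST_A^{-1}$: substituting $\Psi(A)\simeq A[1]$ into (\ref{trianglest2}) and rewriting the third term by Serre duality turns the defining triangle of $\ST_{\Psi(A)}'$ into the standard inverse-twist triangle $\ST_A^{-1}(-)\to\Id\to A\otimes\RHom(A,-)[2]$. Then $\phi$ is obtained as a lift: regarding $\delta\ast\ST_A\colon\ST_{\Psi(A)}'\circ\ST_A\to\ST_A$ and the rotated triangle $\Id\xrightarrow{\alpha}\ST_A\xrightarrow{\beta}\Phi_{p^*A\otimes q^*A^\vee}[1]$, I would check that $\beta\circ(\delta\ast\ST_A)=0$ (it factors through a composite of two consecutive maps in the convolution of the two defining triangles, which vanishes by the octahedral axiom), so that $\delta\ast\ST_A$ lifts along $\alpha$; the lift is unique by the sphericity vanishing $\Hom(\ST_{\Psi(A)}'\circ\ST_A,\Phi_{p^*A\otimes q^*A^\vee})=0$. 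Taking $\phi$ to be this unique lift, $\alpha\circ\phi=\delta\ast\ST_A$ holds by construction, and one checks $\phi$ coincides with the canonical isomorphism $\ST_A^{-1}\circ\ST_A\simeq\Id$, hence is an isomorphism.

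The main obstacle is the sign and trace bookkeeping in the second paragraph: verifying that under the chain of dualities (double-dual, Serre duality, tensor--hom adjunction) the co-evaluation $\theta$ is carried precisely to the evaluation $\varepsilon$ with the correct sign, so that the diagram commutes on the nose rather than only up to a scalar. I would handle this by tracking everything through the kernel triangles on $X\times X$, where $\varepsilon$ and $\theta$ are literally the trace $p^*A\otimes q^*A^\vee\to\Delta_*\mathcal{O}_X$ and its Grothendieck--Verdier dual, and invoking the compatibility of the trace map with duality to pin down the scalar.
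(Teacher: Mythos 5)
Your proposal is correct and, for the construction of $\varphi$, essentially coincides with the paper's argument: both reduce the identification $\Phi\simeq\Phi'$ to the statement that the defining triangle of $\ST_{\Psi(A)}'$ is the Grothendieck--Verdier dual (with respect to $\SheafHom(-,p^*\Lambda^*[1]\otimes q^*(\Lambda^*[1])^\vee[2])$) of the defining triangle of $\ST_A$, the key input being compatibility of the trace map with duality at the level of kernels on $X\times X$. You are right to insist on working with the kernel triangles rather than with cones of natural transformations, since cones are not functorial in a triangulated category; the paper does exactly this, packaging the comparison through the auxiliary functor $\tilde\Psi$ on $D^b(X\times X)$ and the unit $\eta:\Id\to\Psi^2$, whereas you phrase it as a morphism of the two evaluation triangles --- the content is the same. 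Where you genuinely diverge is in the construction of $\phi$: the paper adapts Meachan's explicit construction of the counit $\ST_A'\circ\ST_A\to\Id$ (compatible with $\alpha$ and $\tilde\delta\ast\ST_A$ by construction) and then transports it along the isomorphism $\mathcal{P}'_{\Psi(A)}\simeq\mathcal{P}'_A$ induced by $\Psi(A)\simeq A[1]$, while you first identify $\ST_{\Psi(A)}'$ with $\ST_A^{-1}$ and then produce $\phi$ as the unique lift of $\delta\ast\ST_A$ along $\alpha$, using the vanishings $\Ext^{-1}(A,A)=\Ext^{-2}(A,A)=0$ at the kernel level for existence and uniqueness. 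Your route buys a cleaner uniqueness statement (and an easy proof that $\phi$ is an isomorphism, since $\Hom(\Delta_*\mathcal{O}_X,\Delta_*\mathcal{O}_X)=\mathbb{C}$ forces any nonzero lift to be a scalar multiple of the identity), at the cost of having to first establish $\ST_{\Psi(A)}'\circ\ST_A\simeq\Id$ abstractly before the Hom computations make sense; the paper's citation of Meachan avoids that but leaves more of the compatibility implicit. Your justification of $\Psi(A)\simeq A[1]$ via $v(\Psi(A))=-v(A)$ and uniqueness of the stable spherical bundle is a welcome addition, as the paper asserts this without proof.
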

\begin{proof}
    Consider the functor $$\tilde{\Psi}:D^b(X\times X)\to D^b(X\times X),\qquad \tilde{\Psi}(-)=\SheafHom(-,p^*\Lambda^*[1]\otimes q^*(\Lambda^*[1])^\vee[2]).$$ By the Grothendieck-Verdier duality applied morphism $\Delta$, we obtain an isomorphisms $\Tilde{\Psi}(\Delta_*\mathcal{O}_X)\simeq \Delta_* \mathcal{O}_X$. Moreover, we have that $\tilde{\Psi}(p^*A\otimes q^*A^\vee)\simeq p^*\Psi(A)\otimes q^*\Psi(A)^\vee[2]$ and   under these isomorphisms, $\tilde{\Psi}(\varepsilon)$ corresponds via the adjuntion map $\Delta_* \dashv \Delta^!$  to the dual of the trace map $(tr)^*:\mathcal{O}_X\to \Psi(A)\otimes \Psi(A)^\vee$. Thus we obtain a commutative diagram 
\begin{equation}\label{specialmorph}
    \xymatrixcolsep{4pc}\xymatrix{
    \mathcal{P}_{\Psi(A)}'\ar[r]^-{\delta}  \ar@{-->}[d]^{t} & \Delta_*\mathcal{O}_X \ar[r]^-{\theta} \ar[d] &  p^*\Psi(A)\otimes \Psi(A)^\vee[2] \ar[d] \\
    \tilde{\Psi}(\mathcal{P}_A) \ar[r]^-{\tilde{\Psi}(\alpha_R)} &\tilde{\Psi}(\Delta_*\mathcal{O}_X) \ar[r]^-{\tilde{\Psi}(\varepsilon)} & \tilde{\Psi}(p^*A\otimes q^*A^\vee)
    },
\end{equation}
and so we can find an isomorphism $t:\mathcal{P}_{\Psi(A)}'\to  \tilde{\Psi}(\mathcal{P}_A)$ which complete the diagram. On the other hand, for any $\mathcal{E}\in D^b(X\times X)$, by  functoriality of the Grothendieck-Verdier duality applied  to the projection $p:X\times X\to X$, we obtain an  isomorphism of functors  $ \Phi_{\tilde{\Psi}(\mathcal{E})}\circ \Psi^2\to \Psi\circ \Phi_\mathcal{E}\circ \Psi $ (see the computation made in (\ref{psifunctor})). In particular, for $\mathcal{E}=\mathcal{P}_A$ and $\mathcal{E}=\Delta_*\mathcal{O}_X$ we obtain  natural isomorphism  $\Phi_{\tilde{\Psi}(\mathcal{P}_A)}\circ \Psi^2\to \Psi\circ \ST_A\circ \Psi $ and $ \Phi_{\Tilde{\Psi}(\Delta_*\mathcal{O}_X)}\circ \Psi^2\to \Psi\circ \Phi_{\Delta_*\mathcal{O}_X}\circ \Psi=\Psi^2 $ which fit into the commutative diagram (again, by the functoriality of the Grothendieck-Verdier duality)
\begin{equation*}
    \xymatrix{
    \Phi_{\tilde{\Psi}(\mathcal{P}_A)}\circ \Psi^2 \ar[r] \ar[d]^{\tilde{\Psi}(\alpha)\ast \Psi^2} & \Psi\circ \ST_A\circ \Psi  \ar[d]^-{\Psi\ast\alpha\ast \Psi}\\
 \Phi_{\Tilde{\Psi}(\Delta_*\mathcal{O}_X)}\circ \Psi^2 \ar[r] & \Psi\circ \Phi_{\Delta_*\mathcal{O}_X}\circ \Psi=\Psi^2
    }.
\end{equation*}
Now consider the diagram 
\begin{equation*}
    \resizebox{\displaywidth}{!}{%
    \xymatrixcolsep{5pc}\xymatrix{
    \Psi^2\circ \ST_{\Psi(A)} \ar[r]^-{\eta^{-1}\ast\ST_{\Psi(A)}} \ar[d]_-{\Psi^2\ast\delta}& \ST_{\Psi(A)} \ar[r]^{t} \ar[d]_-{\delta}& \Phi_{\tilde{\Psi}(\mathcal{P}_A)} \ar[rr]^-{\eta\ast \Psi^2} \ar[d]_-{\tilde{\Psi}(\alpha)} & &\Phi_{\tilde{\Psi}(\mathcal{P}_A)}\circ \Psi^2 \ar[r] \ar[d]^{\tilde{\Psi}(\alpha)\ast \Psi^2} & \Psi\circ \ST_A\circ \Psi  \ar[d]^-{\Psi\ast\alpha\ast \Psi}\\
 \Psi^2 \ar[r]^-{\eta^{-1}} & \Id   \ar[r] & \Phi_{\tilde{\Psi}(\Delta_*\mathcal{O}_X)} \ar[rr]^-{\Phi_{\tilde{\Psi}(\Delta_*\mathcal{O}_X)}\ast\eta} & & \Phi_{\Tilde{\Psi}(\Delta_*\mathcal{O}_X)}\circ \Psi^2 \ar[r] & \Psi\circ \Phi_{\Delta_*\mathcal{O}_X}\circ \Psi=\Psi^2,
 }}
\end{equation*}
where $\eta=\eta^{\Lambda^*[1]}:\Id\to \Psi^2$ is the natural transformation defined in Section \ref{section4}.   The first and third square  (from left to right)  are clearly commutative. The second square commutes since it is the diagram of functors induced by the left square in (\ref{specialmorph}). Thus, since the last square is also commutative, the entire diagram commutes. Moreover, by following the construction of the natural transformations, it is easy to see that the composition 
  $$  \resizebox{1\hsize}{!}{%
  \xymatrixcolsep{4pc}\xymatrix{
  \Id   \ar[r] & \Phi_{\tilde{\Psi}(\Delta_*\mathcal{O}_X)} \ar[rr]^-{\Phi_{\tilde{\Psi}(\Delta_*\mathcal{O}_X)}\ast\eta} & & \Phi_{\Tilde{\Psi}(\Delta_*\mathcal{O}_X)}\circ \Psi^2 \ar[r] & \Psi\circ \Phi_{\Delta_*\mathcal{O}_X}\circ \Psi=\Psi^2
    }%
    }
  $$
coincides with  $\eta:\Id\to \Psi^2$. Therefore we obtain a natural isomorphism of functors $\varphi':\Psi^2\circ \ST_{\Psi(A)}\to \Psi\circ \ST_A\circ \Psi $ which fits into the commutative diagram
 \begin{equation*}
      \xymatrix{
    \Psi^2\circ \ST_{\Psi(A)} \ar[r]^-{\varphi'} \ar[d]_-{\Psi^2\ast\delta} & \Psi\circ \ST_A\circ \Psi  \ar[d]^-{\Psi\ast\alpha\ast \Psi}\\
 \Psi^2 \ar[r]^-{\Id} & \Psi^2.
 }
 \end{equation*}
 Then $\varphi\coloneqq \Psi^{-1}\ast \varphi': \Psi\circ \ST_{\Psi(A)}\to \ST_A\circ \Psi $ satisfies the required conditions. 
 
 For the construction of the second  natural isomorphism $\phi:\ST_{\Psi(A)}'\circ\ST_A\to \Id$ fitting in (\ref{diagrams}), consider the exact triangle defining the dual twist functor $\ST_A'$ associate to $A$:
\begin{equation}\label{triangles3}
   \xymatrix{
    \mathcal{P}'_{A} \ar[r]^-{\tilde{\delta}} & \Delta_*\mathcal{O}_X \ar[r]^-{\tilde{\theta}} & p^*A \otimes  q^*A^\vee[2]  \ar[r]^-{\tilde{\gamma}}&  \mathcal{P}'_{A} [1].
   } 
\end{equation}
During the  proof of \cite[Theorem~2.3]{Meachan2016ANO}, the author constructed a natural isomorphism of functors $\Id\to \ST_A\circ \ST_A'$ whose composition with  $\tilde{\delta}:\ST_A'\to \Id $ is the natural transformation $\alpha\ast \ST_A':\ST_A'\to \ST_A\circ \ST_A'$. We can apply the same ideas in order to obtain a   natural isomorphism  $\ST_A'\circ \ST_A\to \Id $ whose composition with  $\alpha:\Id\to \ST_A$ is the map $\tilde{\delta}\ast \ST_A:\ST_A'\circ \ST_A\to  \ST_A$. Now since $\Psi(A)\overset{\sim}{\to} A[1]$, we obtain an isomorphism $\lambda:\mathcal{P}_{\Psi(A)}'\to \mathcal{P}_A'$ fitting into the  isomorphism of triangles  (\ref{trianglest2}) and (\ref{triangles3}) induced by $\Psi(A)\overset{\sim}{\to} A[1]$. The natural transformation $\phi:\ST_{\Psi(A)}'\circ \ST_A\to \Id$ given by the composition  $\ST_{\Psi(A)}'\circ \ST_A\overset{\lambda \ast\ST_A}{\to} \ST_{A}'\circ \ST_A \to \Id $ satisfies the required conditions. 
\end{proof}
Using the natural transformation  $\varphi$ and $\phi$  we construct the   isomorphism $\zeta:\Id\to \Phi^2$ as follows: we set $\zeta:\Id\to \Phi^2$ to be the natural transformation that completes the commutative square
\begin{equation}\label{zetadef}
     \xymatrixcolsep{5pc}\xymatrix{
    \Id \ar[r]^{\zeta} \ar[d]_-{\eta} & \Phi^2 \\
     \Psi^2  \ar[r]^-{\Psi\ast  \phi  \ast \Psi }   & \Phi'\circ \Phi \ar[u]_-{ \varphi^{-1}\ast \Phi}.
    }
\end{equation}
For each $F\in D^b(X)$, we define the  linear map 
$$ \Phi:\Hom(\Phi(F),F)\to \Hom(\Phi(F),F),\qquad s\mapsto s^\Phi=\zeta_F^{-1}\circ \Phi(s),$$
and similarly, for a morphism  $ s\in\Hom(F,\Phi(F))$, we define $s^\Phi=\Phi(s)\circ \zeta_F$. By using (\ref{zetadef}), it is straightforward to check that the following diagram commutes 
\begin{equation}\label{phipsiaction}
    \xymatrix{
    \Hom(\Phi(F),F) \ar[r]^{\Phi} \ar[d]^{\ST_{\Psi(A)}'(-)} & \Hom(\Phi(F),F) \ar[dd]^{\circ \varphi^{-1}_F } \\
    \Hom(\ST_{\Psi(A)}'\circ \Phi(F),\ST_{\Psi(A)}'(F)) \ar[d]^{\circ \phi_{\Psi(F)}^{-1}  }& \\
    \Hom(\Psi(F), \ST_{\Psi(A)}'(F)) \ar[r]^\Psi & \Hom(\Phi'(F),F).
    }
\end{equation}
Here the bottom arrow  denotes the  map which sends a morphism  $t\in \Hom(\Psi(F),F)$ to  $t^\Psi= \eta_{F}^{-1}\circ\Psi(t)$ (see Section \ref{section4}). 

To prove Proposition \ref{phiaction}, we first get  rid of the dependency of (\ref{bil1}) in the isomorphism $s:\Phi(F_k)\to F_k$ by  considering the multi-linear map 
\begin{equation}\label{bil2}
  \resizebox{0.91\hsize}{!}{%
  \xymatrixrowsep{0.2pc} \xymatrix{ \Hom(F_k,A[1])\times \Hom (\Phi(F_k),F_k) \times \Hom(\Phi(F_k),A[1]) \ar[r] &  \Hom(\Phi(A[1]),A[1])\simeq \mathbb{C} \\
   (f,s,g) \ar@{|->}[r] &  g\circ s \circ \Phi(f)
   } %
   }
\end{equation}
The following Lemma is straightforward.
\begin{lemma}\label{phiactionst}
    Consider $f,g$ and $s$ as in (\ref{bil2}). Then 
    $$ (f,s,g)^\Phi=(f,s^\Phi,g).$$
\end{lemma}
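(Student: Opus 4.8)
The plan is to prove the identity as a purely formal consequence of the naturality of $\zeta\colon\Id\to\Phi^2$; the elaborate construction of $\zeta$ in (\ref{zetadef}) plays no role here beyond guaranteeing that $\zeta$ is a natural isomorphism. So none of the spherical-twist machinery of the preceding lemmas is actually invoked — only the fact that $\zeta$ is natural, that $\Phi=\ST_A\circ\Psi$ is contravariant, and the two defining formulas for $(\cdot)^\Phi$.

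First I would unwind the left-hand side. The codomain $\Hom(\Phi(A[1]),A[1])$ of the trilinear map (\ref{bil2}) is of the form $\Hom(\Phi(F),F)$ with $F=A[1]$, so the $\Phi$-action on it is by definition $t\mapsto t^\Phi=\zeta_{A[1]}^{-1}\circ\Phi(t)$. Setting $t=(f,s,g)=g\circ s\circ\Phi(f)$ and using that $\Phi$ is contravariant, the composition reverses under $\Phi$ and the inner $\Phi(f)$ is promoted to $\Phi^2(f)$, giving
$$ (f,s,g)^\Phi=\zeta_{A[1]}^{-1}\circ\Phi(g\circ s\circ\Phi(f))=\zeta_{A[1]}^{-1}\circ\Phi^2(f)\circ\Phi(s)\circ\Phi(g). $$
The key step is then the naturality square of $\zeta$ at the morphism sitting inside the inner $\Phi$, i.e. at $f\colon F_k\to A[1]$, which reads $\zeta_{A[1]}\circ f=\Phi^2(f)\circ\zeta_{F_k}$, equivalently $\zeta_{A[1]}^{-1}\circ\Phi^2(f)=f\circ\zeta_{F_k}^{-1}$. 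Substituting this and recognising $\zeta_{F_k}^{-1}\circ\Phi(s)=s^\Phi$ from the definition of the $\Phi$-action on $\Hom(\Phi(F_k),F_k)$, the outer $\zeta$'s cancel and I am left with
$$ (f,s,g)^\Phi=f\circ\zeta_{F_k}^{-1}\circ\Phi(s)\circ\Phi(g)=f\circ s^\Phi\circ\Phi(g), $$
which is again a value of the trilinear map (\ref{bil2}), now with $s$ replaced by its transform $s^\Phi$; this is the asserted identity $(f,s^\Phi,g)$.

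The whole argument is formal, and since everything lives in the one-dimensional space $\Hom(\Phi(A[1]),A[1])\cong\mathbb{C}$ the conclusion is ultimately an equality of scalars requiring no stability or positivity input. The single point demanding care — and the one place where a spurious sign or an interchange of the two $\Hom(F_k,A[1])$ arguments $f$ and $g$ could creep in — is the bookkeeping of contravariance: applying $\Phi$ to the scalar $t$ reverses the triple composition, so one must track which outer arrow is carried under $\Phi^2$, invoke naturality of $\zeta$ precisely at that arrow, and verify that the surviving $\zeta_{F_k}^{-1}$ pairs with $\Phi(s)$ (and not with the adjacent $\Phi(g)$) so as to reconstitute $s^\Phi$. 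This contravariance-induced reversal of the roles of $f$ and $g$ is the only subtle step, and it is the one I would write out with all arrows labelled; once it is settled the rest is a one-line cancellation.
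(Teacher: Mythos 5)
Your formal computation is the right one and is surely what the paper has in mind (the paper itself gives no proof, dismissing the lemma as ``straightforward''): apply $\Phi$ to $t=g\circ s\circ\Phi(f)$, use contravariance to get $\Phi(t)=\Phi^2(f)\circ\Phi(s)\circ\Phi(g)$, then use naturality of $\zeta$ at $f$ to trade $\zeta_{A[1]}^{-1}\circ\Phi^2(f)$ for $f\circ\zeta_{F_k}^{-1}$, arriving at $(f,s,g)^\Phi=f\circ s^\Phi\circ\Phi(g)$. All of that is correct, and you rightly note that $\zeta$ is natural because it is built from whiskerings and compositions of natural transformations.

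The problem is the last line. Under the paper's own definition $(a,s,b)=b\circ s\circ\Phi(a)$, the expression you derived, $f\circ s^\Phi\circ\Phi(g)$, is $(g,s^\Phi,f)$ --- not $(f,s^\Phi,g)$. You explicitly single out the possible interchange of $f$ and $g$ as ``the only subtle step,'' and then assert without argument that the surviving expression is $(f,s^\Phi,g)$; it isn't, and the two are not interchangeable here. The difference between $(f,s^\Phi,g)$ and $(g,s^\Phi,f)$ is precisely the transpose of the bilinear form, i.e.\ the symmetric-versus-alternating dichotomy that Proposition \ref{phiaction} is trying to establish. Indeed, if the identity held literally without the swap, then combining it with Lemma \ref{actiononA} (which says $\Phi$ acts by $-1$ on the one-dimensional target) and with $s^\Phi=\pm s$ would give $-\langle f,g\rangle=\pm\langle f,g\rangle$, forcing the nondegenerate pairing to vanish identically in one of the two cases; with the swap one instead gets $-\langle f,g\rangle=\pm\langle g,f\rangle$, which is exactly the symmetric/alternating conclusion. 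So the statement you actually proved, $(f,s,g)^\Phi=(g,s^\Phi,f)$, is the correct and needed one (the paper's printed formula appears to carry the same slip, compounded by a typo in the third factor of (\ref{bil2}), which must be $\Hom(F_k,A[1])$ for the composition to typecheck); but as written your proof ends with an unjustified, and false, identification, and the one point you flagged as requiring care is the point left unresolved.
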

Using this Lemma, we reduce  the proof of  Proposition \ref{phiaction} to the  study the action of $\Phi$ on the vector space $\Hom(\Phi(F),F)$  for the cases  $F=A[1]$ and $F=F_k$. We start with the case $F=A[1]$. 
\begin{lemma}\label{actiononA}
    $\Phi$ acts as $-1$ on $\Hom(\Phi(A[1]),A[1])$. 
\end{lemma}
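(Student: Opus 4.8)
The plan is to exploit the fact that $A$ is spherical and $\tau$-invariant so that every object in sight is a shift of $A$, reducing the statement to a one-dimensional sign computation. First I would identify the space. Recall $\Psi(A)\simeq A[1]$, so $\Psi(A[1])=\Psi(A)[-1]\simeq A$, and since $A$ is spherical on a K3 surface the twist satisfies $\ST_A(A)=A[-1]$. Hence
$$\Phi(A[1])=\ST_A(\Psi(A[1]))\simeq \ST_A(A)=A[-1],$$
and therefore $\Hom(\Phi(A[1]),A[1])=\Hom(A[-1],A[1])=\Ext^2(A,A)\simeq\mathbb{C}$. Since $\zeta$ makes $\Phi^2\simeq\Id$, the induced operator $s\mapsto s^\Phi$ on this one-dimensional space is an involution, so it is $\pm1$ and the whole problem is to pin down the sign.

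The key reduction is to transport the question to the already-understood action of $\Psi$. Specializing the commutative diagram (\ref{phipsiaction}) to $F=A[1]$, every vertical map is an isomorphism of one-dimensional spaces: the left column is built from the equivalence $\ST'_{\Psi(A)}(-)$ followed by composition with $\phi^{-1}_{\Psi(A[1])}$, and the right column is composition with $\varphi^{-1}_{A[1]}$. Tracking the identifications (using $\ST'_{\Psi(A)}(A[1])=A[2]$ and $\Psi(A[1])\simeq A$) shows the source of the bottom arrow is $\Hom(A,A[2])=\Ext^2(A,A)=\Ext^1(A,\Psi(A))$ and that the bottom arrow is exactly the $\Psi$-action $t\mapsto t^\Psi=\eta^{-1}_{A[1]}\circ\Psi(t)$ of Section \ref{section4}. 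Consequently $s^\Phi$ and the $\Psi$-transform have the same eigenvalue, so it suffices to compute the $\Psi$-eigenvalue on $\Ext^1(A,\Psi(A))$.

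This last computation I would carry out with Corollary \ref{eigenspace} and the splitting $A\otimes A=S^2A\oplus\wedge^2A$, under which the $(+)$-eigenspace is governed by $S^2A$ and the $(-)$-eigenspace by $\wedge^2A$. Concretely, $\Ext^1(A,\Psi(A))^{-}\simeq\Ext^0(\wedge^2A,\Lambda^*)=\Hom(\det A,\Lambda^*)$. Since $A$ has rank $2$ with $c_1(A)=-h$, one has $\wedge^2A=\det A=\Lambda^*$, whence $\Hom(\Lambda^*,\Lambda^*)=\mathbb{C}$ and the $(-)$-eigenspace is already one-dimensional. As $\Ext^1(A,\Psi(A))=\Ext^2(A,A)$ is itself one-dimensional, the generator is entirely anti-invariant, i.e. $\Psi$ acts by $-1$ (equivalently the $S^2$-part vanishes, $\chi(S^2A\otimes\Lambda)=0$). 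Combining with the previous paragraph gives $s^\Phi=-s$.

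The main obstacle is the sign bookkeeping in the transfer step: one must verify that the canonical isomorphisms $\varphi,\phi$ and the defining square (\ref{zetadef}) of $\zeta$ introduce no spurious sign, i.e. that the identity $s\circ\varphi^{-1}_{A[1]}=\ST'_{\Psi(A)}(s)\circ\phi^{-1}_{\Psi(A[1])}$ holds so that the eigenvalue of $\Phi$ matches that of $\Psi$ rather than its negative. I expect this to follow from the coherence diagrams (\ref{diagrams}) and (\ref{specialmorph}) established in the construction of $\varphi$ and $\phi$ — which relate them to the structure maps $\alpha,\delta$ and to $\eta$ — but checking it requires unwinding those natural transformations on the specific object $A[1]$, where fortunately all functors reduce to shifts of the single spherical object $A$.
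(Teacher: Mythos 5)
Your proposal is correct and follows essentially the same route as the paper: the paper likewise uses diagram (\ref{phipsiaction}) together with the adjunction/swap diagram of Lemma \ref{lemma4.1} to convert the $\Phi$-action into the $\Psi$-type action, and then observes that the whole one-dimensional space is the antisymmetric part coming from $\wedge^2A\simeq\Lambda^*$, i.e.\ a copy of $H^2(X,\mathcal{O}_X)$, forcing the eigenvalue $-1$. The sign-bookkeeping issue you flag at the end is resolved for free, since (\ref{phipsiaction}) is already shown to commute and all the spaces involved are one-dimensional with isomorphisms as vertical maps, so the top and bottom arrows act by the same scalar; the only slip is that the $(-)$-eigenspace should be $\Ext^2(\wedge^2A,\Lambda^*)$ rather than $\Ext^0(\wedge^2A,\Lambda^*)$, though both equal $\mathbb{C}$ here.
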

\begin{proof}
    For the action of $\Psi$ on $\Hom(\Psi(F),G)$ using $\zeta_F^{-1}$, we can apply the same argument used in the proof Lemma \ref{lemma4.1} to obtain the following commutative diagram
    \begin{equation}
        \xymatrix{ 
        \Hom(\Psi(F),G)\ar[r]^-{\Psi} \ar[d] & \Hom(\Psi(G),F) \ar[d] \\
        \Hom(\Lambda^*[1],F\otimes G) \ar[r]^{\mu\circ } & \Hom(\Lambda^*[1],G\otimes F).
        }
    \end{equation}
      By using this  diagram   with $F=F_k$ and $G=\ST_{\Psi(A)}'$, and the diagram  (\ref{phipsiaction}), we obtain the commutative square
\begin{equation*}
    \xymatrix{
    \Hom(\Phi(A[1]),A[1]) \ar[r]^{\Phi} \ar[d] & \Hom(\Phi(A[1]),A[1]) \ar[d]\\
      \Hom( \Lambda^*[1], A[1]\otimes \ST_{\Psi(A)}'(A[1])) \ar[r]^{\mu\circ } & \Hom(\Lambda^*[1], \ST_{\Psi(A)}'(A[1])\otimes A[1] ).
    }
\end{equation*}
After choosing an isomorphism  $\ST_{\Psi(A)}'(A[1])\simeq A[2]$, we see that  $\mu:A[1]\otimes A[2]\to A[2]\otimes A[1]$ is given by $\mu(a\otimes b)=b\otimes a$  (for $a\in (A[1])^{-1}=A$ and $b \in A[2]^{-2}=A$). Thus, $\Phi$  acts as multiplication by $-1$ if every map in $ \Hom( \Lambda^*[1], A[1]\otimes A[2])$ factorizes through $(\Lambda^2 A)[3]\to A[1]\otimes A[2]$. Since $\Hom(\Lambda^*[1],(\Lambda^2 A)[3])\simeq \dim H^2(X,\mathcal{O}_X)$ is 1-dimensional, and  $\Hom(\Phi(A[1]),A[1])=$ is also 1-dimensional, the Lemma follows.
\end{proof}
For the case $F=F_k$, first note that for any isomorphism $s\in \Hom(\Phi(F_k),F_k)$, we have  $s^{\Phi}=\pm s$ iff $(s^{-1})^{\Phi}=\pm s^{-1}$, so we reduce to study the action of $\Phi$ on $\Hom(F_k,\Phi(F_k))$. Consider an object $E\in \Delta_k|_{F_k}$ which is fixed by $\tau$ and choose an isomorphism $t:E\to \Psi(E)$. Then $t$ induces an isomorphism of exact triangles 
\begin{equation}\label{psiphiaction}
    \xymatrix{
  A\otimes \Hom(A,E)  \ar[r]^-{ev} \ar[d] &   E  \ar[d]^t \ar[r]^{a} & F_k \ar[d]^s \ar[r]^-{b} &  A\otimes \Hom(A,E)[1]  \ar[d]  \\
  A\otimes \Hom(A,\Psi(E)) \ar[r]^-{ev} & \Psi(E)  \ar[r]^{a'}  & \Phi(F_k) \ar[r]^-{b'}&   A\otimes \Hom(A,\Psi(E))[1]}
\end{equation}
for some isomorphism $s:F_k\to \Phi(F_k)$. The action of $\Psi$ on $t$  and the action of $\Phi$ on $s$  are related via the following result:
\begin{lemma}\label{lemmadiagram}
The  diagram 
\begin{equation*}
    \xymatrix{
  E \ar[d]^{t^\Psi} \ar[r]^{a} & F_k \ar[d]^{s^\Phi}  \\
   \Psi(E)  \ar[r]^{a'}  & \Phi(F_k) }
\end{equation*}
commutes.
\begin{proof}
Write $V=\Hom(A,E), V'=\Hom(A,\Psi(E))$ and 
\begin{equation*}
    \begin{split}
        K=\RHom(A, \Psi(A\otimes V[1])), \qquad & \quad  K'=\RHom(A, \Psi(A\otimes V'[1])), \\
        U=\RHom(A,\Psi(F_k)), \quad  &  \qquad U'=\RHom(A,\Psi\circ\Phi(F_k)),\\
        Q=\RHom(A, \Psi(E)),\quad  & \qquad Q'=\RHom(A, \Psi^2(E)).
    \end{split}
\end{equation*}
We have a commutative diagram 
\begin{equation*}
     \resizebox{1\hsize}{!}{%
    \xymatrix{  
  &  A\otimes K'\ar@{-->}[dd] \ar[rr] \ar[dl] &  & A\otimes U' \ar[rr] \ar@{-->}[dd] \ar[dl]  &  & A\otimes Q' \ar[dd] \ar[dl] \\
 A\otimes K \ar[rr] \ar[dd] & & A\otimes U  \ar[rr] \ar[dd] & &  A\otimes Q \ar[dd]  & \\
 & \Psi(A\otimes V'[1] ) \ar@{-->}[rr] \ar@{-->}[dd] \ar[dl] & & \Psi\circ \Phi(F_k) \ar@{-->}[rr]^>>>>>>>>>{\Psi(a')} \ar[dl]  \ar@{-->}[dd]^<<<<<<{\alpha_{\Psi\circ\Phi(F_k)}} & & \Psi^2(E) \ar[dd] \ar[dl]^{\Psi(s)}  \\
 \Psi(A\otimes V[1]) \ar[rr] \ar[dd] & & \Psi(F_k) \ar[rr]^>>>>>>>>>{\Psi(a)} \ar[dd]^<<<<<<{\alpha_{\Psi(F_k)}} & &  \Psi(E) \ar[dd] & \\
& \Phi(A\otimes V'[1]) \ar[dl] \ar@{-->}[rr] & & \Phi^2(F_k)\ar@{-->}[rr] \ar[dl]^{\Phi(t)} & & \Phi\circ \Psi(E) \ar[dl] \\
 \Phi(A\otimes V[1]) \ar[rr]  & & \Phi(F_k) \ar[rr] & & \Phi(E)&}%
 }
\end{equation*} 
 By  applying the octahedral axiom to the compositions 
$$ A\otimes K^{0}\to A\otimes U\to  \Psi(F_k),\quad \text{ and }\quad  A\otimes K'^{0}\to A\otimes U'\to  \Psi\circ\Phi(F_k)$$
with $K^0=\Hom(A,\Psi(A\otimes V[1]))$ and $K'^0=\Hom(A,\Psi(A\otimes V'[1]))$, we obtain the morphism of exact triangles
\begin{equation}\label{distriangles}
 \resizebox{0.91\hsize}{!}{%
\xymatrix{
A\otimes \Hom(A,\Psi^2(E))\ar[r]^-{ev} \ar[d] & \Psi^2(E) \ar[d]^{\Psi(t)} \ar[r]^{a''} & \Phi^2(F_k) \ar[d]^{\Phi(s)} \ar[r] & A\otimes \Hom(A,\Psi^2(E))[1] \ar[d] \\
A\otimes \Hom(A,\Psi(E))\ar[r]^-{ev} & \Psi(E) \ar[r]^{a'} & \Phi(F_k) \ar[r] & A\otimes \Hom(A,\Psi(E)), }%
}
\end{equation}
where the morphisms $a'$ and $a''$ fit into the  commutative diagrams 
\begin{equation}\label{triangles1}
    \resizebox{0.91\hsize}{!}{%
    \xymatrixcolsep{4pc}\xymatrix{
    \Psi(F_k) \ar[r]^{\Psi(a)} \ar[dr]_{\alpha_{\Psi(F_k)}} & \Psi(E) \ar[d]^{a'}  & & \Psi\circ \Phi(F_k) \ar[r]^{\Psi(a')} \ar[dr]_{\alpha_{\Psi\circ\Phi(F_k)}} & \Psi^2(E)  \ar[d]^{a''} \\
     &  \Phi(F_k) & & & \Phi^2(F_k).
    }%
    }
\end{equation}
Now let us consider the following diagram 
\begin{equation}\label{bigsquare}
\xymatrixrowsep{4pc}\xymatrixcolsep{5pc}\xymatrix{
 \Psi\circ \Phi(F_k) \ar[dr]_{\Psi(a')} \ar@/^2pc/[drr]^{\Psi(\alpha_{\Psi(F_k)})} \ar@/_2pc/[ddr]_{\alpha_{\Psi\circ\Phi(F_k)}} \ar@{}[ddr] | {III} &  &  \\
     & \Psi^2(E) \ar@{}[u] | {II} \ar[r]^{\Psi^2(a)} \ar[d]^{a''} \ar@{}[dr] | {I}  & \Psi^2(F_k) \ar[d]^{\Psi(\phi_{\Psi(F_k)}) }  \\
     & \Phi^2(F_k)  \ar[r]^-{\varphi_{\Phi(F_k)}}   &   \Phi'\circ \Phi(F_k) 
    }
\end{equation}
\underline{Claim} The square $I$ commutes.\\
The triangles  $II$ and $III$ are commutative by (\ref{triangles1}). The exterior square also commutes. Indeed,  if we consider the exact triangle 
\begin{equation*}
    \xymatrixcolsep{3.5pc}\xymatrix{ 
    \ST_{\Psi(A)}'\circ \Phi(F_k)\ar[r]^-{\delta_{\Phi(F_K)}}& \Phi(F_k)\ar[r]^-{co.ev} & \Psi(A)\otimes\RHom(\Phi(F_k), \Psi(A))^\vee.
    }
\end{equation*}
Then the map $\Psi(\delta_{\Phi(F_K)})$ divides the exterior square into two two triangles 
\begin{equation*}
\xymatrixrowsep{4pc}\xymatrixcolsep{5pc}\xymatrix{
 \Psi\circ \Phi(F_k) \ar[ddrr]^{\Psi(q)} \ar@/^2pc/[drr]^{\Psi(\alpha_{\Psi(F_k)})} \ar@/_2pc/[ddr]_{\alpha_{\Psi\circ\Phi(F_k)}} &  &  \\
     & & \Psi^2(F_k)  \ar@{}[l] | {V} \ar[d]^-{\Psi(\phi_{\Psi(F_k)})}  \\
     & \Phi^2(F_k) \ar[r]^-{\varphi_{\Phi(F_k)}} \ar@{}[u] | {IV}   &    \Phi'\circ \Phi(F_k). 
    }
\end{equation*}
The triangles $IV$ and $V$ commute since they are obtained by applying the diagram of functors in  (\ref{diagrams})  to  $\Phi(F_K)$ and $\Psi(F_k)$ respectively. Thus from (\ref{bigsquare}), we obtain that $$ (\Psi(\phi_{\Psi(F_k)})\circ\Psi^2(a))\circ\Psi(a')=   (\varphi_{\Phi(F_k)}\circ a'')\circ \Psi(a').$$
Moreover,  since $\Hom(\Psi(A\otimes \Hom(A,\Psi(E))[1]),\Phi^2(F_k))=0 $, then $\Psi(\phi_{\Psi(F_k)})\circ\Psi^2(a)= \varphi_{\Phi(F_k)}\circ a'' $ and the square $I$ commutes. Finally, from the construction of the natural transformation $\zeta:\Id\to \Phi^2$ (see (\ref{zetadef})), the following diagram commutes
\begin{equation*}
    \xymatrixcolsep{4pc}\xymatrix{E \ar[r]^a  \ar[d]^{\eta_E} & F_k \ar[d]^{\eta_F}  \ar@/^2pc/[dd]^{\zeta_F} \\ 
    \Psi^2(E) \ar[r]^{\Psi^2(a)} \ar[dd]^{\Psi(s)} \ar[dr]^{a''} & \Psi^2(F_k) \ar[d] \\ 
    & \Phi^2(F_k) \ar[d]^{\Phi(s)} \\
    \Psi(E) \ar[r]^{a'} &  \Phi(F_k) 
    }
\end{equation*}
and the Lemma follows.
\end{proof}
\end{lemma}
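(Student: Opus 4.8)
The plan is to reduce the asserted commutativity $s^{\Phi}\circ a=a'\circ t^{\Psi}$ to the defining square (\ref{zetadef}) of $\zeta$, by tracking how the destabilizing triangle (\ref{triangle2}) transforms under the contravariant functors $\Psi$ and $\Phi=\ST_A\circ\Psi$. Unwinding the definitions gives $t^{\Psi}=\Psi(t)\circ\eta_E$ and $s^{\Phi}=\Phi(s)\circ\zeta_{F_k}$, while (\ref{zetadef}) reads $\zeta_{F_k}=\varphi^{-1}_{\Phi(F_k)}\circ\Psi(\phi_{\Psi(F_k)})\circ\eta_{F_k}$. First I would apply naturality of $\eta\colon\Id\to\Psi^2$ along $a$, rewriting $\eta_{F_k}\circ a=\Psi^2(a)\circ\eta_E$. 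It then suffices to produce a connecting morphism $a''\colon\Psi^2(E)\to\Phi^2(F_k)$ and to prove the two identities
\[
\varphi^{-1}_{\Phi(F_k)}\circ\Psi(\phi_{\Psi(F_k)})\circ\Psi^2(a)=a''
\qquad\text{and}\qquad
\Phi(s)\circ a''=a'\circ\Psi(t),
\]
since together with the above they yield $s^{\Phi}\circ a=\Phi(s)\circ a''\circ\eta_E=a'\circ\Psi(t)\circ\eta_E=a'\circ t^{\Psi}$.

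The second identity I would obtain by applying $\Psi$ twice to (\ref{triangle2}), giving triangles for $\Psi(E)$ and $\Psi^2(E)$. Recalling from the construction of $a'$ in \cite[Lemma~3.16]{Flapan_2021} that $a'$ is induced by the unit $\alpha\colon\Id\to\ST_A$ of the spherical twist, so that $a'\circ\Psi(a)=\alpha_{\Psi(F_k)}$, one further application of $\Psi$ together with the octahedral axiom applied to the composites $A\otimes\Hom(A,\Psi^{j}(E))\to\Psi^{j}(E)$ produces a map $a''\colon\Psi^2(E)\to\Phi^2(F_k)$ with $a''\circ\Psi(a')=\alpha_{\Psi\circ\Phi(F_k)}$, fitting into a morphism of exact triangles whose middle and right vertical arrows are $\Psi(t)$ and $\Phi(s)$ (the map $s$ being the one induced by the isomorphism of triangles (\ref{psiphiaction})). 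Reading off its right-hand square is exactly $\Phi(s)\circ a''=a'\circ\Psi(t)$.

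The hard part is the first identity, equivalently the commutativity of the square $\varphi_{\Phi(F_k)}\circ a''=\Psi(\phi_{\Psi(F_k)})\circ\Psi^2(a)$. Here I would use the compatibility diagrams (\ref{diagrams}) relating $\varphi$ and $\phi$ to the structure maps $\alpha\colon\Id\to\ST_A$ and $\delta\colon\ST_{\Psi(A)}'\to\Id$ of the two spherical twists. The idea is to factor the exterior composite through the co-evaluation $\delta_{\Phi(F_k)}\colon\ST_{\Psi(A)}'\Phi(F_k)\to\Phi(F_k)$, splitting the square into two triangles, each of which commutes upon instantiating (\ref{diagrams}) at $\Phi(F_k)$ and at $\Psi(F_k)$. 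This forces the two composites to agree after precomposition with $\Psi(a')$; to upgrade this to equality of the maps out of $\Psi^2(E)$ I would invoke the vanishing
\[
\Hom\bigl(\Psi(A\otimes\Hom(A,\Psi(E))[1]),\,\Phi^2(F_k)\bigr)=0,
\]
which follows from the stability of $F_k$ and the rigidity of $A$, and which kills the indeterminacy supported on the cone of $\Psi(a')$.

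Assembling these pieces with the definition of $\zeta$ and naturality of $\eta$ then gives $s^{\Phi}\circ a=a'\circ t^{\Psi}$, as desired. I expect the genuine mathematical content to reside entirely in the square $\varphi_{\Phi(F_k)}\circ a''=\Psi(\phi_{\Psi(F_k)})\circ\Psi^2(a)$; the surrounding difficulty is largely bookkeeping, namely keeping careful track of the contravariance of $\Psi$ and $\Phi$ and of the signs built into $\eta$ and $\zeta$ while iterating the octahedral constructions.
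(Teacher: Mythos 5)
Your proposal follows essentially the same route as the paper: you unwind $t^{\Psi}$ and $s^{\Phi}$ via $\eta$ and $\zeta$, construct $a''$ by applying $\Psi$ twice to the destabilizing triangle and invoking the octahedral axiom (the paper's morphism of triangles (\ref{distriangles})), isolate the key square $\varphi_{\Phi(F_k)}\circ a''=\Psi(\phi_{\Psi(F_k)})\circ\Psi^2(a)$, and prove it exactly as the paper does — splitting along $\delta_{\Phi(F_k)}$ into two triangles governed by (\ref{diagrams}) and removing the ambiguity on the cone of $\Psi(a')$ with the vanishing $\Hom(\Psi(A\otimes\Hom(A,\Psi(E))[1]),\Phi^2(F_k))=0$. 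This matches the paper's argument step for step, so the proposal is correct and not a genuinely different proof.
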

Now we are ready to prove Proposition (\ref{phiaction}).
\begin{proof}(Proposition \ref{phiaction} )
Consider an element  $F_k\in \Fix(\overline{\tau},\overline{\Delta})$ and  let $E\in \Delta|_{F_k}$ be  a $\Psi$-invariant element. In the notation of (\ref{psiphiaction}), the morphisms $a,a''$ are both non-zero. Thus  Lemma  \ref{lemmadiagram} implies that $t^\Psi=\pm t$ iff $s^\Phi=\pm s$. Moreover, from the proof of Proposition \ref{desflip}, we know that 
 $$ t^\Psi=\begin{cases} t \text{ if $E\in M_\eta$}, \\
-t \text{ if $E\in \Omega_\eta$}.
\end{cases}$$
Then the statement follows from Lemma \ref{phiactionst}  and \ref{actiononA}.
\end{proof}
\bibliographystyle{plain}
\bibliography{references}
\end{document}